\newif\ifarxiv
\def\arxiv#1{\href{http://arxiv.org/abs/#1}{\texttt{arXiv:#1}}}
\DeclareMathAlphabet\mathbfit{OML}{cmm}{b}{it}
\newlist{enumarabic}{enumerate}{1}
\setlist[enumarabic]{font=\normalfont,label=(\arabic*),leftmargin=0.3in}
\newlist{enumroman}{enumerate}{1}
\setlist[enumroman]{font=\normalfont,label=(\roman*),leftmargin=0.3in}
\let\oldFootnote\footnote
\newcommand\nextToken\relax
\renewcommand\footnote[1]{%
    \oldFootnote{#1}\futurelet\nextToken\isFootnote}
\newcommand\isFootnote{%
    \ifx\footnote\nextToken\textsuperscript{,}\fi}
\def\N{\mathbb N}
\def\Z{\mathbb Z}
\def\Q{\mathbb Q}
\def\R{\mathbb R}
\def\C{\mathbb C}
\def\ZZ{\mathcal Z}
\let\phi\varphi
\let\epsilon\varepsilon
\let\emptyset\varnothing
\def\deg#1{|#1|}
\DeclareMathOperator{\Tor}{Tor}
\DeclareMathOperator{\Hom}{Hom}
\DeclareMathOperator*{\colim}{colim}
\DeclareMathOperator{\lin}{lin}
\def\AW{AW}
\def\AWu#1{\AW_{\mkern -1mu #1}}
\def\cupone{\mathbin{\cup_1}}
\def\cuptwo{\mathbin{\cup_2}}
\def\kk{\Bbbk}
\def\kkZ{\Z}
\def\Kl{\mathbf K}
\def\tildecc{\tilde c}
\def\ffbar{f}
\def\susp{\mathrm{s}}
\def\desusp{\susp^{-1}}
\def\Simp{\mathcal S}
\let\transpp\transp
\let\newterm\emph
\def\MM{\mathbf{M}}
\newcommand*\xbar[1]{%
   \hbox{%
     \vbox{%
       \hrule height 0.35pt 
       \kern0.35ex
       \hbox{%
         \kern-0.15em
         \ensuremath{#1}%
         \kern-0.15em
       }%
     }%
   }%
}
\def\EE{\mathbf{E}}
\def\iter#1#2{#1^{[#2]}}
\def\BB{\mathbf{B}}
\def\PsiSigma{\Psi_{\Sigma}}
\def\tildePsiSigma{\tilde\Psi_{\Sigma}}
\def\refhomog#1{\ref*{h@#1}}
\def\eqrefhomog#1{\textup{\tagform@{\ref*{h@#1}}}}
\def\citehomog#1{\cite[#1]{Franz:2019}}
\def\refgersten#1{\ref*{g@#1}}
\def\eqrefgersten#1{\textup{\tagform@{\ref*{g@#1}}}}
\def\citegersten#1{\cite[#1]{Franz:2018a}}
\theoremstyle{plain}
\newtheorem{theorem}{Theorem}[section]
\newtheorem{proposition}[theorem]{Proposition}
\newtheorem{lemma}[theorem]{Lemma}
\newtheorem{corollary}[theorem]{Corollary}
\theoremstyle{definition}
\newtheorem{remark}[theorem]{Remark}
\newtheorem{example}[theorem]{Example}
\theoremstyle{remark}
\newtheorem*{acknowledgements}{Acknowledgements}
\numberwithin{equation}{section}
\def\cf{\emph{cf.}}
\def\RP{\mathbb{RP}}
\def\HK{H_{K}}
\def\HL{H_{L}}
\def\Kl{\mathbf K}
\def\mmm{\mathfrak{m}}
\def\barsim#1{[#1]}
\def\bigbarsim#1{\bigl[#1\bigr]}
\def\barbarsim#1{\llbracket#1\rrbracket}
\def\Bar#1#2{\BB(\kk,#2,#1)}
\def\BarEl#1#2{#2\otimes#1}
\def\id{1}
\def\tildeq{\tilde q}
\def\hatalpha{\skew{-1}\hat\alpha}
\def\aa{\mathbfit{a}}
\def\bb{\mathbfit{b}}
\let\topfont\mathcal
\let\algfont\mathbfit
\def\TT{\topfont{T}}
\def\GG{\topfont{G}}
\def\KK{\topfont{K}}
\def\LL{\topfont{L}}
\def\XX{\topfont{X}}
\def\YY{\topfont{Y}}
\def\ZZ{\topfont{Z}}
\def\SS{\topfont{S}}
\def\DD{\topfont{D}}
\def\EEGG{\topfont{E G}}
\def\TTT{\algfont{T}}
\def\KKK{\algfont{K}}
\def\LLL{\algfont{L}}
\def\XXX{\algfont{X}}
\def\YYY{\algfont{Y}}
\def\ZZZ{\algfont{Z}}
\def\kkV{\kk[V]}
\def\coord#1#2{#2^{#1}}
\def\SimpDJ{DJ}
\def\SimpSOne{S^{1}}
\def\Simp{S}
\def\PolyProd#1#2#3{(#2,#3)^{#1}}
\def\bigPolyProd#1#2#3{\bigl(#2,#3\bigr)^{#1}}
\def\DDDD{\mathbb D}
\def\APL{A_{\mathrm{PL}}}
\begin{document}

\title[Cohomology rings of smooth toric varieties]
  {The cohomology rings of smooth toric varieties and quotients of moment-angle complexes}
\author{Matthias Franz}
\thanks{The author was supported by an NSERC Discovery Grant.}
\address{Department of Mathematics, University of Western Ontario, London, Ont.\ N6A\;5B7, Canada}
\email{mfranz@uwo.ca}

\dedicatory{Dedicated to Volker Puppe on the occasion of his 80th birthday}
      
\subjclass[2010]{Primary 14F45, 14M25; secondary 55N91, 55U10}

\begin{abstract}
  Partial quotients of moment-angle complexes are topological analogues of
  smooth, not necessarily compact toric varieties. In 1998, Buchstaber and Panov
  proposed a formula for the cohomology ring of such a partial quotient
  in terms of a torsion product involving the corresponding Stanley--Reisner ring.
  We show that their formula gives the correct cup product
  if \(2\) is invertible in the chosen coefficient ring,
  but not in general.
  We rectify this by defining an explicit deformation
  of the canonical multiplication on the torsion product.
\end{abstract}

\maketitle

\section{Introduction}

Let \(\Sigma\) be a simplicial complex on the finite vertex set~\(V\).
The moment-angle complex associated to~\(\Sigma\) is the space
\begin{align}
  \label{eq:intro-def-ma}
  \ZZ_{\Sigma}(D^{2},S^{1}) &= \bigcup_{\smash{\sigma\in\Sigma}} \ZZ_{\sigma}(D^{2},S^{1}) \subset (D^{2})^{V}, \\
\shortintertext{where \(D^{2}\),~\(S^{1}\subset\C\) denote the unit disc and circle, and the exponents in}
  \ZZ_{\sigma}(D^{2},S^{1}) &= (D^{2})^{\sigma}\times (S^{1})^{V\setminus\sigma}
\end{align}
the index sets for the Cartesian products.
The compact torus~\(T=(S^{1})^{V}\) acts on~\(Z_{\Sigma}=\ZZ_{\Sigma}(D^{2},S^{1})\)
in the canonical way.
Moment-angle complexes are compact analogues of
complements of complex coordinate subspace arrangements,
which are exactly the toric subvarieties of affine space.
Like a simplicial fan, the simplicial complex~\(\Sigma\)
can be identified with a subfan of the positive orthant in~\(\R^{V}\),
defining (with the obvious notation)
a toric subvariety~\(\ZZZ_{\Sigma} = \ZZ_{\Sigma}(\C,\C^{\times}) \subset \C^{V}\).
The canonical inclusion~\(Z_{\Sigma}\hookrightarrow\ZZZ_{\Sigma}\)
then is a \(T\)-equivariant strong deformation retract.

Buchstaber--Panov call the quotient~\(X_{\Sigma}=Z_{\Sigma}/K\) by a freely acting subtorus
a \newterm{partial quotient} \cite[Sec.~7.5]{BuchstaberPanov:2002}. We do so
more generally for any freely acting closed subgroup~\(K\) of~\(T\) with quotient~\(L\).
The motivation for this construction comes again from toric geometry:
Any smooth toric variety~\(\XXX_{\Sigma}\) is of the form~\(\ZZZ_{\Sigma}/\KKK\)
for an algebraic subgroup~\(\KKK\) of~\(\TTT=(\C^{\times})^{V}\)
with quotient~\(\LLL\);
this presentation is often called the Cox construction.
As before, the inclusion~%
\( 
  X_{\Sigma} \hookrightarrow \XXX_{\Sigma}
\) 
is an \(L\)-equivariant strong deformation retract. 

Bifet--De\,Concini--Procesi~\cite[Thm.~8]{BifetDeConciniProcesi:1990}
have identified the \(L\)-equivariant cohomology of~\(\XXX_{\Sigma}\)
with the evenly graded Stanley--Reisner algebra~\(\kk[\Sigma]\) of~\(\Sigma\);
the module structure over~\(H^{*}(BL;\kk)\)
is induced by the projection~\(T\to L\).
This holds for coefficients in any principal ideal domain~\(\kk\) and
generalizes directly to partial quotients.

By the Eilenberg--Moore theorem,
there is a spectral sequence of algebras
converging to~\(H^{*}(X_{\Sigma};\kk)=H^{*}(\XXX_{\Sigma};\kk)\) with second page
\begin{equation}
  \label{eq:intro:E2}
  E_{2} = \Tor^{*}_{H^{*}(BL;\kk)}(\kk,\kk[\Sigma]).
\end{equation}
Recall that the torsion product of graded commutative algebras
is bigraded and carries a canonical product respecting bidegrees.
Moreover, \(\Tor\) is graded negatively in the cohomological setting.
If \(\kk[\Sigma]\) is free over~\(H^{*}(BL;\kk)\), then \eqref{eq:intro:E2} gives
the Jurkiewicz--Danilov formula~%
\(H^{*}(\XXX_{\Sigma};\kk)\cong\kk\otimes_{H^{*}(BL;\kk)}\kk[\Sigma]\),
\cf~\cite[Thm.~5.3.1]{BuchstaberPanov:2015}.

In~1998, Buchstaber--Panov claimed that for connected~\(K\)
the above spectral sequence collapses in the best possible way,
namely that \(\HL^{*}(X)\) is isomorphic to~\eqref{eq:intro:E2}
as a graded algebra,
\begin{equation}
  \label{eq:intro:iso-tor}
  H^{*}(X_{\Sigma};\kk) \cong \Tor^{*}_{H^{*}(BL;\kk)}(\kk,\kk[\Sigma]),
\end{equation}
with the cohomological degree corresponding to the total degree
in the torsion product.
It seems that this statement first appeared in~\cite[Thm.~5]{BuchstaberPanov:1998}
for moment-angle complexes and field coefficients
and in~\cite[Thm.~4.13]{BuchstaberPanov:1999} for the general case.
It has since propagated to several other publications,
including the textbook~\cite[Thms.~7.6~\&~7.37]{BuchstaberPanov:2002}.
Buchstaber--Panov's proof, however, was found to be incorrect in~2003.

An isomorphism of \(\kk\)-modules of the form~\eqref{eq:intro:iso-tor}
was established by the author
in his doctoral dissertation~\cite[Thm.~3.3.2]{Franz:2001},~\cite[Thm.~1.2]{Franz:2006}
for any~\(K\subset T\). The product structure turned out to be more elusive
(and was also incorrectly stated in~\cite{Franz:2001}
and conjectured in~\cite{Franz:2006}).
In the case of moment-angle complexes, the Buchstaber--Panov product formula was
confirmed by the author~\cite[Thm.~1.3]{Franz:2003a}. A different proof appeared
shortly afterwards in the expanded Russian edition~\cite[Thm.~8.6]{BuchstaberPanov:2004}
of~\cite{BuchstaberPanov:2002} as well as in the
paper~\cite{BaskakovBuchstaberPanov:2004} by Baskakov--Buchstaber--Panov.

In~2015, Panov published a short note~\cite{Panov:2015}
in which he sketched a new construction
of a multiplicative isomorphism
for any subtorus~\(K\subset T\) and any commutative ring~\(\kk\).
There is no way, however,
to make the isomorphism~\eqref{eq:intro:iso-tor} compatible with products in general.

\begin{example}
  \label{ex:intro}
  \def\Ztwo{\Z_{2}}
  Consider the smooth toric variety~\(\XXX=\ZZZ/\KKK\)
  where \(\ZZZ=\C^{2}\setminus\{0\}\) and~\(\KKK=\{\pm1\}\).
  For~\(\kk=\Z_{2}\),
  the second page of the Eilenberg--Moore spectral sequence has the form
  \begin{equation}
    \begin{array}{ccc|c}
      & \Ztwo & & 4 \\
      & \Ztwo & \Ztwo & 2 \\
      & & \Ztwo & 0 \\
      \hline
      -2 & -1 & 0 & \mathrlap{\;\;\;.}
    \end{array}
  \end{equation}
  Since the product on the \(\Tor\)~term respects bidegrees,
  the square of the generator in bidegree~\((-1,2)\) vanishes.
  On the other hand, the corresponding element in~\(H^{1}(\XXX;\Ztwo)\)
  does not square to zero as \(\XXX\) is homotopy-equivalent to~\(\RP^{3}\).
\end{example}

In \Cref{sec:examples} we will generalize this example
and show that even for quotients by subtori
a multiplicative isomorphism of the form~\eqref{eq:intro:iso-tor}
may fail to exist.
These counterexamples work over~\(\kk=\Z_{2}\) and~\(\kk=\Z\).

Our main result says that the prime~\(2\) is the only one that poses problems.

\begin{theorem}
  \label{thm:intro:main-mod2}
  Assume that \(2\) is invertible in~\(\kk\).
  Then there is an isomorphism of graded \(\kk\)-algebras
  \begin{equation*}
    H^{*}(\XXX_{\Sigma};\kk) \cong \Tor^{*}_{H^{*}(BL;\kk)}(\kk,\kk[\Sigma]).
  \end{equation*}
\end{theorem}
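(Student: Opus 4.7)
The plan is to reduce the statement to a purely algebraic question about the torsion product, and then use the invertibility of~$2$ to untwist an obstruction.

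First, I would invoke the general algebraic result of the paper, which (as signaled in the abstract) constructs for any commutative ring~$\kk$ a multiplicative isomorphism of the form $H^*(X_\Sigma;\kk)\cong \Tor^*_{H^*(BL;\kk)}(\kk,\kk[\Sigma])$ where the $\Tor$-side is equipped with an explicit deformation~$\mu'$ of the canonical shuffle product~$\mu$. Such a deformation is forced because the natural cochain-level model---for instance the two-sided bar construction $\Bar{\kk[\Sigma]}{H^*(BL;\kk)}$ with its shuffle product---agrees with $C^*(X_\Sigma;\kk)$ only up to correction terms governed by Steenrod cup-$i$ operations that measure the failure of strict commutativity of the singular cup product.

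Granted this, \Cref{thm:intro:main-mod2} reduces to showing that, when $2\in\kk^{\times}$, the graded $\kk$-algebras $(\Tor,\mu)$ and $(\Tor,\mu')$ are isomorphic. I would construct an explicit isomorphism as the identity plus correction terms, built inductively along the bar filtration. At each step the discrepancy between $\mu$ and $\mu'$ is expressible via an antisymmetric cup-$1$-type contribution; dividing by~$2$ permits a symmetrization that cancels the deformation at that filtration level, and compatibility with products in the inductive step is forced by the shuffle identities together with the graded commutativity of~$\Tor$.

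The main obstacle is global multiplicativity: the corrections at different filtration levels are not independent, and must be chosen coherently so that the total map is an algebra homomorphism rather than merely a linear isomorphism. Controlling this relies on the explicit structure of~$\kk[\Sigma]$ as a quotient of a polynomial ring by a squarefree monomial ideal, together with a careful accounting of how Steenrod's cup-$i$ operations interact with shuffle products in the bar construction. The counterexamples over~$\Z_{2}$ anticipated in the introduction confirm that the argument must genuinely break down when $2$ fails to be a unit, so at every inductive step division by~$2$ is used in an essential way.
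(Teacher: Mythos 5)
Your broad strategy is the right one: first invoke the general result (\Cref{thm:intro:main-general}, proved as \Cref{thm:iso-tor-general}) producing a multiplicative isomorphism onto $\Tor$ equipped with the twisted $*$-product, and then use the invertibility of~$2$ to remove the twist. However, from that point onward you take a genuinely different route from the paper, and your route leaves the central step unjustified.

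You propose to treat the untwisting as a separate algebraic problem: construct, when $2\in\kk^\times$, an abstract isomorphism of graded algebras between $(\Tor,\mu)$ and $(\Tor,\mu')$ by adding correction terms inductively along the bar (or exterior-degree) filtration. The paper avoids this entirely. Instead it goes back to the cochain level and changes the \emph{gauge}: it replaces the tautological $1$-chain representative $c_v=\barsim{y_v}$ by the symmetrized $\tildecc_v=\tfrac12\barsim{y_v}-\tfrac12\barsim{-y_v}$, which produces a new formality quasi-isomorphism $\tilde f_\Sigma$ and new twisting elements $\tildeq_{ij}$. The key computation (\Cref{thm:tilde-qij-0}) is that the modified functions $\tilde\alpha_{ij}$ are \emph{even}, i.e.\ $\tilde\alpha_{ij}(-x)=\tilde\alpha_{ij}(x)$, so by \Cref{thm:value-fT}\,\ref{thm:value-fT-2} every $\tildeq_{ij}$ vanishes identically. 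Hence the $*$-product computed in this gauge \emph{is} the canonical product, and \Cref{thm:iso-tor-general} directly yields \Cref{thm:iso-tor-2}; combining with \Cref{thm:free-quotient} gives \Cref{thm:intro:main-mod2}. In other words, the paper arranges for the deformation never to appear rather than cancelling it after the fact.

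The step you flag as ``the main obstacle'' (global multiplicativity of your inductively built correction) is a genuine gap in your argument: you acknowledge that the filtration-level corrections must be chosen coherently so that the total map is an algebra homomorphism, but you give no mechanism for doing so beyond an appeal to ``careful accounting'' of cup-$i$ operations. Without that mechanism the proposal is a plan, not a proof. The paper's gauge change sidesteps precisely this difficulty, because the modification is made \emph{before} any multiplicative structure is transported to $\Tor$; compatibility with products is then automatic from \Cref{thm:iso-tor-general}, which holds in any gauge.

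One smaller imprecision: you describe the discrepancy between $\mu$ and $\mu'$ as an ``antisymmetric cup-$1$-type contribution.'' It is actually the \emph{symmetric} part of the cup-$1$ structure that is responsible. By \eqref{eq:d-gamma-ij} the diagonal twist $q_{ii}$ comes from the cup-$1$-square $\gamma_i\cupone\gamma_i$ (a Steenrod-square-type term), and the off-diagonal twist $q_{ij}$, $i>j$, comes from the symmetrized $\gamma_i\cupone\gamma_j+\gamma_j\cupone\gamma_i$. It is precisely this symmetric contribution that obstructs strict commutativity over $\Z_{2}$, and precisely this symmetric contribution that is killed by averaging $\barsim{y_v}$ with $\barsim{-y_v}$ when $2$ is invertible. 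Your intuition that ``division by $2$ is used in an essential way'' is correct; the paper applies it to the cochain representatives rather than to an a posteriori correction of the product.
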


\goodbreak

In order to describe the product in~\(H^{*}(\XXX_{\Sigma};\kk)\) and~\(H^{*}(X_{\Sigma};\kk)\)
for any coefficient ring~\(\kk\),
we consider a deformation of the canonical multiplication
on the torsion product~\eqref{eq:intro:E2}.
We need to introduce some notation to state the formula.

For~\(v\in V\), let \(x_{v}\in H_{1}(L;\Z)\) be the image of the \(v\)-th canonical basis vector
of~\(H_{1}(T;\Z)\) under the projection~\(T\to L\). In the context of toric varieties,
\(x_{v}\) is the minimal integral representative of the ray corresponding to~\(v\).
Recall that a differential graded algebra
computing the torsion product~\eqref{eq:intro:E2}
together with its canonical multiplication is the tensor product
\begin{equation}
  \label{eq:intro:Tor-complex}
  \Kl_{\Sigma} = H^{*}(L;\kk)\otimes_{\kk} \kk[\Sigma]
\end{equation}
with componentwise product and Koszul differential determined by
\begin{equation}
  \label{eq:intro:koszul-diff}
  df = 0
  \qquad\text{and}\qquad
  d\alpha = -\sum_{v\in V}\alpha(x_{v})\, t_{v}\,,
\end{equation}
where the~\(t_{v}\in H^{2}(BT)\) are the canonical generators of the polynomial algebra.

Let us write \(\coord{1}{x_{v}}\),~\dots,~\(\coord{n}{x_{v}}\in\Z\)
for the coordinates of~\(x_{v}\) with respect to some basis for~\(H_{1}(L;\Z)\).
We define linear elements \(q_{ij}\in\kkZ[\Sigma]\) for~\(i\ge j\) by
\begin{equation}
  q_{ii} = \sum_{v\in V} \frac{\coord{i}{x_{v}}(\coord{i}{x_{v}}-1)}{2}\,t_{v},
  \qquad
  q_{ij} = \sum_{v\in V}\coord{i}{x_{v}}x_{v}^{j}\,t_{v}
  \quad\text{if~\(i>j\),}
\end{equation}
noting that the coefficients in the left-hand formula are indeed integers.
The twisted product~\(*\) on the complex~\(\Kl_{\Sigma}\) has the form
\begin{equation}
  (\alpha f)*(\beta g) = \alpha\beta\,fg
  + \sum_{i\ge j} \coord{i}{\alpha}\coord{j}{\beta} \, q_{ij} fg
\end{equation}
for~\(\alpha\),~\(\beta\in H^{1}(L;\kk)\)
having coordinates~\(\coord{i}{\alpha}\),~\(\coord{j}{\beta}\in\kk\) with respect
to the dual basis.
For exterior algebra elements of higher degrees there are additional twisting terms,
see \Cref{sec:twisted-prod} for the precise formula.

\begin{theorem}
  \label{thm:intro:main-general}
  Let \(\kk\) be a principal ideal domain.
  There is an isomorphism of graded algebras
  \begin{equation*}
    H^{*}(\XXX_{\Sigma};\kk) \cong \Tor^{*}_{H^{*}(BL;\kk)}(\kk,\kk[\Sigma])
  \end{equation*}
  where the multiplication on the right-hand side
  is induced by the \(*\)-product on~\(\Kl_{\Sigma}\).
\end{theorem}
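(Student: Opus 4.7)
The plan is to upgrade the additive isomorphism of~\eqref{eq:intro:iso-tor} established in~\cite{Franz:2006} to a multiplicative one by constructing a chain-level quasi-isomorphism that carries the twisted product~$*$ on~$\Kl_{\Sigma}$ to the cup product on a cochain model for~$\XXX_{\Sigma}$. Since $X_{\Sigma}\hookrightarrow\XXX_{\Sigma}$ is an $L$-equivariant deformation retract, it is enough to work with~$X_{\Sigma}$ and the Borel fibration $X_{\Sigma}\to EL\times_{L}X_{\Sigma}\to BL$.

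First I would set up the Eilenberg--Moore input on the cochain level: refine the Bifet--De\,Concini--Procesi computation to a multiplicative quasi-isomorphism between $C^{*}(EL\times_{L}X_{\Sigma};\kk)$ (as a module over an appropriate cdga model of~$C^{*}(BL;\kk)$) and the Stanley--Reisner algebra~$\kk[\Sigma]$ sitting over the polynomial algebra~$H^{*}(BL;\kk)$. Using the Koszul resolution of~$\kk$ over~$H^{*}(BL;\kk)$, the two-sided bar construction $\BB(\kk,H^{*}(BL;\kk),\kk[\Sigma])$ deformation retracts, as a bigraded module, onto~$\Kl_{\Sigma}$ with the differential~\eqref{eq:intro:koszul-diff}. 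By Eilenberg--Moore the bar complex is quasi-isomorphic as a bigraded module to~$C^{*}(X_{\Sigma};\kk)$; the aim is to produce a \emph{multiplicative} such comparison, identifying the transferred product on~$\Kl_{\Sigma}$ with~$*$.

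The key calculation is to transfer the cup product (equivalently the shuffle product on the bar) down to~$\Kl_{\Sigma}$ along the Koszul contraction. The resulting binary operation splits into the obvious tensor product plus correction terms forced by the fact that cochains on~$BL$ are not strictly commutative: a linear class $\alpha\in H^{1}(L;\kk)$ with coordinates $\coord{i}{\alpha}$ lifts to a cochain whose coboundary is $-\sum_{v}\alpha(x_{v})t_{v}$, and resolving a product $\alpha\cdot\beta$ by homotopy picks up cup-$1$ corrections of the form $t_{v}^{\coord{i}{x_{v}}}\cupone t_{v}^{\coord{j}{x_{v}}}$ coming from the simplicial (or cubical) model of~$BL$. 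The off-diagonal terms yield precisely $q_{ij}$ for $i>j$, while the diagonal $q_{ii}$ with its binomial coefficient $\binom{\coord{i}{x_{v}}}{2}$ arises as the integer lift of the $\cupone$-self-product, reflecting the obstruction to strict commutativity of $\alpha\cup\alpha$ exactly when~$2$ is not invertible. For exterior elements of higher degree, the corresponding twisting terms are generated by iterating these quadratic corrections as dictated by the shuffle formula.

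The main obstacle will be carrying out the transfer combinatorially and showing that the resulting operation is honestly associative (a priori it is only $A_{\infty}$-associative), with no further multiplicative corrections beyond the~$q_{ij}$. A dimension/bidegree count together with a judicious choice of cup-$i$ operations (for instance those derived from the Alexander--Whitney map~$\AW$ and a fixed Koszul contraction) should ensure that the higher Stasheff operations produced by transfer either land in terms already encoded in~$*$ or admit an explicit null-homotopy. A secondary point is to check that~$*$ transforms correctly under change of basis for~$H_{1}(L;\Z)$: a different basis modifies the~$q_{ij}$ by polynomial terms that can be absorbed into an automorphism of~$(\Kl_{\Sigma},*)$, so the resulting graded algebra structure on $H^{*}(\XXX_{\Sigma};\kk)$ is canonical, matching Theorem~\ref{thm:intro:main-mod2} after inverting~$2$.
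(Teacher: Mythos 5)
Your intuition about where the twisting terms~$q_{ij}$ come from is correct --- they reflect the failure of cochains on~$BL$ to be strictly commutative, and the diagonal correction with its divided binomial coefficient is precisely the integral obstruction to squaring a degree-$1$ class --- but there are several genuine gaps between that intuition and a working proof, and the paper's route differs in ways that matter.

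First, your setup works over the graded-commutative polynomial ring~$H^{*}(BL;\kk)$: you deformation retract $\BB(\kk,H^{*}(BL),\kk[\Sigma])$ onto~$\Kl_{\Sigma}$ and propose to transfer the shuffle product. But that bar construction is strictly commutative, and the Koszul contraction carries the shuffle product to the \emph{canonical} exterior product on~$\Kl_{\Sigma}$ with no corrections; the~$q_{ij}$ simply do not appear at this level. To see the $\cupone$-corrections you need a one-sided bar construction over the \emph{noncommutative cochain algebra}~$C^{*}(BL)$, with its homotopy Gerstenhaber structure --- that is what gives $\Bar{\kk[\Sigma]}{C^{*}(BL)}$ a dga structure (Kadeishvili--Saneblidze) in the first place. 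The mismatch between this product and the commutative product on~$\Kl_{\Sigma}$ is the source of the twisting, and it lives on the $C^{*}(BL)$-side, not on the $H^{*}(BL)$-side.

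Second, you treat the multiplicative quasi-isomorphism between a cochain model of the Borel construction and~$\kk[\Sigma]$ as a ``refinement'' of Bifet--De\,Concini--Procesi, but that refinement is a deep result --- the homotopy Gerstenhaber formality of Davis--Januszkiewicz spaces --- which is the subject of a companion paper and is the essential external input. Without identifying this as the key ingredient, the plan glosses over what is arguably the hard part.

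Third, you acknowledge as the ``main obstacle'' that the transferred product is only $A_{\infty}$-associative and that one must kill the higher Stasheff operations, but offer only a ``dimension/bidegree count'' as a hope. The paper sidesteps this entirely: the twisted product~$*$ is \emph{defined} directly on~$\Kl_{\Sigma}$ via the contraction operators in~\eqref{eq:def-twisted-product} and shown to be strictly associative by an elementary manipulation of these operators (Lemma~\ref{thm:twisted-dga}); one then proves that the map $\Phi_{\Sigma}\colon\Kl_{\Sigma}\to\BB_{f_{\Sigma}}(\kk,C^{*}(BL),\kk[\Sigma])$ is \emph{multiplicative up to an explicit chain homotopy}. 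That homotopy is built recursively from the cochains $\gamma_{ij}\in C^{2}(BL)$ satisfying $d[\gamma_{ii}]=[\gamma_{i}]\circ[\gamma_{i}]$ and $d[\gamma_{ij}]=[\gamma_{i}]\circ[\gamma_{j}]+[\gamma_{j}]\circ[\gamma_{i}]$; the simplicial-torus model of~$BL$ (with $L=BN$) is what makes these cochains, and the passage from them to the~$q_{ij}\in\kk[V]$, completely explicit (Lemmas~\ref{thm:d-q-ij} and~\ref{thm:value-fT}). None of this machinery --- the small combinatorial model of~$BL$, the lifting cochains~$\gamma_{ij}$, or the recursively defined homotopy --- is present in your plan, and it is exactly what closes the gap you flagged.

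Finally, a small point: your change-of-basis remark is correct in spirit but is not really needed --- since $*$ is defined from a choice of basis for~$H_{1}(L;\Z)$, the statement just asserts that the resulting ring is $H^{*}(\XXX_{\Sigma};\kk)$ regardless of the choice, which follows automatically from the existence of the multiplicative quasi-isomorphism.
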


The differential graded algebra of singular cochains
on the Borel construction of a partial quotient
or a smooth toric variety is formal over any coefficient ring~\(\kk\).
This also appeared in the author's doctoral dissertation~\cite[Thm.~3.3.2]{Franz:2001}, \cite[Thm.~1.1]{Franz:2003a}, \cite[Thm.~1.4]{Franz:2006},
and was independently obtained by Notbohm--Ray~\cite[Thm.~4.8]{NotbohmRay:2005} shortly afterwards.
In the companion paper~\cite{Franz:2018a} we extend this to
homotopy Gerstenhaber algebras, \cf~\Cref{thm:formality-DJ-short} below.
The proofs of Theorems~\ref{thm:intro:main-mod2} and~\ref{thm:intro:main-general}
crucially depend on this significantly stronger formality result.

For these two theorems
to hold, it is actually not necessary for the subgroup
to act freely on the Cox construction or the moment-angle complex.
It suffices that all isotropy groups are finite
and their orders invertible in~\(\kk\),
see Corollaries~\ref{thm:free-general} and~\ref{thm:free-2}.
For toric varieties, this is equivalent to the \(\kk\)-regularity of the fan.
Without this assumption,
Theorems~\ref{thm:intro:main-mod2} and~\ref{thm:intro:main-general} 
remain valid with \(H^{*}(\XXX_{\Sigma};\kk)=H^{*}(\ZZZ_{\Sigma}/\KKK;\kk)\)
replaced by the equivariant cohomology~\(\HK^{*}(\ZZZ_{\Sigma};\kk)\),
see Theorems~\ref{thm:iso-tor-general} and~\ref{thm:iso-tor-2}.
Moreover, all results hold for moment-angle complexes defined from simplicial posets
instead of simplicial complexes.

\begin{acknowledgements}
  I am indebted to Suyoung Choi for resuscitating my interest in this question,
  to Volker Puppe for suggesting the short proof of \Cref{thm:free-quotient},
  to Xin Fu for a critical reading of the proof of \Cref{thm:iso-tor-general}
  and to Derek Krepski for explanations about stacks.

  This work completes a project started in my dissertation~\cite{Franz:2001}
  under the supervision of Volker Puppe. To thank him for the many things
  he taught me and for our long and fruitful collaboration that I have very much enjoyed,
  I dedicate this paper to him.
\end{acknowledgements}

\section{Bar constructions}
\label{sec:bar}

Throughout this paper,
the letter~\(\kk\) denotes a principal ideal domain.
Unless specified otherwise, all tensor products as well as
all (co)chain complexes and all (co)homologies are over~\(\kk\).
We write the identity map on a \(\kk\)-module~\(M\) as~\(\id_{M}\).

We call a differential graded algebra (dga)~\(A\)
connected if it is \(\N\)-graded with~\(A^{0}\cong\kk\); if additionally \(A^{1}=0\),
then it is \(1\)-connected. The same applies to dg~coalgebras (dgcs).
We denote the multiplication map of a dga~\(A\) by~\(\mu_{A}\)
and the diagonal of a dgc~\(C\) by~\(\Delta_{C}\).
Augmentations are written as \(\epsilon\).
A (graded) commutative dga (cdga)
is a dga~\(A\) such that \(ab=(-1)^{\deg{a}\deg{b}}\,ba\) for all~\(a\),~\(b\in A\).

Let \(A\) be an augmented dga with differential of degree~\(+1\).
As a graded \(\kk\)-module, the (reduced) bar construction~\(\BB A\) of~\(A\) is
the direct sum of the pieces \(\BB_{k}A=(\desusp\bar A)^{\otimes k}\)
for~\(k\ge0\), where \(\desusp\bar A\) is the desuspension
of the augmentation ideal~\(\bar A=\ker\epsilon\).
We say that an element~\([a_{1}|\dots|a_{k}]\in \BB_{k}A\) has \newterm{length}~\(k\).
Its differential is
\begin{align}
  \label{eq:bar-differential}
  d\,[a_{1}|\cdots|a_{k}] &= -\sum_{i=1}^{k}(-1)^{\epsilon_{i-1}}\,[a_{1}|\cdots|a_{i-1}|da_{i}|a_{i+1}|\cdots|a_{k}] \\
  \notag &\qquad\qquad +\sum_{i=1}^{\smash{k-1}} (-1)^{\epsilon_{i}}\,[a_{1}|\cdots|a_{i}a_{i+1}|\cdots|a_{k}]
\end{align}
where
\(
  \epsilon_{i} = \deg{a_{1}} + \dots + \deg{a_{i}} - i
\),
and its image under the diagonal is
\begin{equation}
  \Delta_{\BB A} [a_{1}|\dots|a_{k}] = \sum_{i=0}^{k}[a_{1}|\dots|a_{i}]\otimes[a_{i+1}|\dots|a_{k}].
\end{equation}
With these definitions
\(\BB A\) becomes a dgc, which is connected if \(A\) is \(1\)-connected.
The canonical twisting cochain~\(t_{A}\colon \BB A\to A\) is the composition
of the canonical projection onto~\(\BB_{1}A=\desusp\bar A\)
with the map~\(\desusp\bar A\to\bar A\hookrightarrow A\).

Given a morphism of dgas~\(g\colon A\to A'\), we can form the one-sided bar construction
as the twisted tensor product
\begin{equation}
  \Bar{A'}{A} = \BB A \otimes_{t_{A}} A'.
\end{equation}
This is the tensor product~\(\BB A\otimes A'\) with the differential
\begin{equation}
  \label{eq:def-d-BAA}
  d = d_{\BB A}\otimes\id_{A'} + \id_{\BB A}\otimes d_{A'}
  - (\id_{\BB A}\otimes\mu_{A'})(1\otimes g\,t_{A}\otimes1)(\Delta_{\BB A}\otimes\id_{A'}) .
\end{equation}
Explicitly, the differential is given by
\begin{equation}
  \label{eq:def-d-bar}
  d(\aa\otimes a)
  = d\aa\otimes a
  +(-1)^{\deg{\aa}}\,\aa\otimes da \\
  -(-1)^{\epsilon_{k-1}}
  \,[a_{1}|\dots|a_{k-1}]\otimes g(a_{k})\,a
\end{equation}
for~\(\aa=[a_{1}|\dots|a_{k}]\in \BB A\), \(a\in A\) and the same sign exponent as before.
The last summand is read as~\(0\) for~\(k=0\).
We write the cohomology of the one-sided bar construction as
the differential torsion product
\begin{equation}
  \Tor_{A}(\kk,A') = H^{*}(\Bar{A'}{A}).
\end{equation}
From now on, we will omit the map~\(g\) from our notation.

Given a morphism of dgas~\(f\colon A'\to A''\), we can form
the one-sided bar construction~\(\Bar{A''}{A}\) in the obvious way.
We denote it by~\(\BB_{f}(\kk,A,A'')\) to remind ourselves
that the differential depends on~\(f\). There is a canonical chain map
\begin{equation}
  \Psi_{f}\colon \Bar{A'}{A}\to \BB_{f}(\kk,A,A''),
  \quad
  \aa\otimes a\mapsto \aa\otimes f(a).
\end{equation}

\begin{lemma}
  \label{thm:Psi-quiso}
  Assume that \(A\) is \(1\)-connected and torsion-free over~\(\kk\) and that \(A'\) is bounded below.
  Then \(\Psi_{f}\) is a quasi-iso\-mor\-phism if \(f\) is so.
\end{lemma}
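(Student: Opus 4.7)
The natural strategy is a spectral sequence comparison based on the length filtration of the bar construction. Concretely, for a complex of the form $\Bar{A''}{A}$ (and similarly $\BB_f(\kk,A,A'')$), set
\begin{equation*}
  F_p = \bigoplus_{k\le p}\BB_k A\otimes A'
\end{equation*}
and verify from \eqref{eq:def-d-bar} that $d$ preserves $F_p$: the internal differential of $A$ on the bar factors and the differential of $A'$ are length-preserving, while the multiplication term in~\eqref{eq:bar-differential} and the twisting term at the right end of~\eqref{eq:def-d-bar} both strictly decrease length. The same filtration applies to $\BB_f(\kk,A,A'')$, and the map $\Psi_f=\id\otimes f$ is tautologically filtration-preserving.

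The first thing I would check is convergence. The filtration is clearly exhaustive and bounded below by $F_{-1}=0$; it is not bounded above, so I need boundedness in each total degree. Here the hypothesis that $A$ is $1$-connected kicks in: elements of $\desusp\bar A$ have degree $\ge 1$, so any element of $\BB_k A$ sits in degree $\ge k$. Combined with the assumption that $A'$ (hence $A''$) is bounded below by some integer~$m$, this forces $F_p$ to agree with the whole complex in total degree~$n$ once $p\ge n-m$. Thus each spectral sequence is a first-quadrant-style bounded spectral sequence in every total degree and converges strongly to the cohomology of the total complex.

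Next I would compute the $E_1$ page. On the associated graded $F_p/F_{p-1}=\BB_p A\otimes A'$ only the length-preserving parts of $d$ survive, which is the tensor product differential on $(\desusp\bar A)^{\otimes p}\otimes A'$. Since $A$ is torsion-free over the PID~$\kk$, so is $(\desusp\bar A)^{\otimes p}$, and the Künneth formula yields
\begin{equation*}
  E_1^{p} \cong H^{*}\bigl((\desusp\bar A)^{\otimes p}\bigr)\otimes H^{*}(A'),
\end{equation*}
with the analogous identification for $\BB_f(\kk,A,A'')$ using $H^{*}(A'')$. Under these identifications the induced map of $E_1$ pages is $\id\otimes H^{*}(f)$, which is an isomorphism because $f$ is a quasi-isomorphism.

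Finally I would invoke the comparison theorem: $\Psi_f$ induces an isomorphism on $E_1$, both spectral sequences converge strongly (by the degreewise boundedness established above), so $\Psi_f$ is a quasi-isomorphism. The main subtlety I expect is the convergence step, where the combination of $1$-connectivity of~$A$ and bounded-belowness of~$A'$ is essential — dropping either hypothesis would leave the length filtration unbounded in some total degree and the argument would collapse. The role of the torsion-freeness of $A$ is more routine, but genuinely needed to identify $E_1$ as an honest tensor product of cohomologies via Künneth.
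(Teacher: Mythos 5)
Your proof is the same "standard spectral sequence argument" the paper has in mind, and you have correctly identified the role of each hypothesis: the $1$-connectedness of $A$ (together with $A'$ being bounded below) makes the length filtration finite in each total degree, which is precisely the point the paper singles out.

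One small slip worth flagging: torsion-freeness of $\BB_p A = (\desusp\bar A)^{\otimes p}$ does not give a clean identification $E_1^p\cong H^{*}(\BB_p A)\otimes H^{*}(A')$. Over a PID, flatness of the complex $\BB_p A$ gives the Künneth short exact sequence
\begin{equation*}
  0\to H^{*}(\BB_p A)\otimes H^{*}(A')\to H^{*}(\BB_p A\otimes A')\to\Tor\bigl(H^{*}(\BB_p A),H^{*}(A')\bigr)\to 0,
\end{equation*}
and the Tor term need not vanish since neither $H^{*}(\BB_p A)$ nor $H^{*}(A')$ need be torsion-free. The conclusion you want — that $\id\otimes f$ induces an isomorphism on $E_1$ — still holds, but the correct justification is naturality of this short exact sequence plus the five lemma, comparing the two rows for $A'$ and $A''$; it is not that $E_1$ is literally a tensor product of cohomologies. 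The torsion-freeness hypothesis is what makes the Künneth sequence available, not what kills the Tor term.
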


\begin{proof}
  This is a standard spectral sequence argument.
  We assume \(A\) to be \(1\)-con\-nected to ensure that
  the filtration of~\(\BB A\) is finite in each degree.
\end{proof}

We recall the definition of a \newterm{homotopy Gerstenhaber algebra} (homotopy G-al\-ge\-bra, \newterm{hga}),
due to Voronov--Gerstenhaber~\cite[\S 8]{VoronovGerstenhaber:1995},
see also~\citehomog{Sec.~\refhomog{sec:def-hga}}.
An hga is an augmented dga with differential of degree~\(+1\)
together with a multiplication
\begin{equation}
  \label{eq:hga-product-bar}
  \mu_{\BB A}\colon \BB A \otimes \BB A \to \BB A
\end{equation}
turning the bar construction into a dg~bialgebra, that is,
into a dga such that the diagonal~\(\Delta_{\BB A}\) is a morphism of dgas.
There is an additional requirement on
the twisting cochain~\(t\colon \BB A\otimes \BB A\to A\) determining~\(\mu_{\BB A}\).
If we write the components of~\(t\) as
\begin{equation}
  \label{eq:hga-t}
  \EE_{kl}\colon \BB_{k}A \otimes \BB_{l}A
  \to \BB_{1}A = \desusp\bar A \stackrel{\susp}{\longrightarrow}\bar A \hookrightarrow A
\end{equation}
with~\(k\),~\(l\ge0\), then we require
that both~\(\EE_{0}\coloneqq \EE_{10}\) and~\(\EE_{01}\) are the map
\begin{equation}
  \BB_{1}A\otimes \BB_{0}A = \BB_{0}A\otimes \BB_{1}A
  = \desusp\bar A \stackrel{\susp}{\longrightarrow}\bar A \hookrightarrow A
\end{equation}
and that otherwise the only non-zero components are the maps~\(\EE_{l}\coloneqq \EE_{1l}\).
We write the product as~\(\aa\circ\bb\in \BB A\).
Any cdga is canonically an hga by setting \(\EE_{k}=0\) for all~\(k\ge1\).

Conversely, the maps~\(\EE_{k}\) determine the product~\eqref{eq:hga-product-bar} by the general procedure
of recovering a dgc map~\(C\to\BB A\) from the associated twisting cochain~\(C\to A\).
The formula given in~\citehomog{eq.~\eqrefhomog{eq:shm-to-dgc}} for the case where \(C\) is a bar construction carries over.
By way of example, for elements~\(a_{1}\),~\(a_{2}\),~\(b_{1}\),~\(b_{2}\in A\) of even degrees one has
\begin{align}
  \MoveEqLeft{[a_{1}|a_{2}]\circ[b_{1}|b_{2}] = {}} \\
  \notag & \phantom{{}+{}} [a_{1}|a_{2}|b_{1}|b_{2}] - [a_{1}|b_{1}|a_{2}|b_{2}] + [a_{1}|b_{1}|b_{2}|a_{2}] \\
  \notag & {} + [b_{1|}a_{1}|a_{2}|b_{2}] - [b_{1}|a_{1}|b_{2}|a_{2}] + [b_{1}|b_{2}|a_{1}|a_{2}] \\
  \notag & {} + [a_{1}|\EE_{1}(a_{2};b_{1})|b_{2}] + [a_{1}|\EE_{2}(a_{2};b_{1},b_{2})] - [\EE_{1}(a_{1};b_{1})|a_{2}|b_{2}] \\
  \notag & {} - [a_{1}|b_{1}|\EE_{1}(a_{2};b_{2})] - [\EE_{1}(a_{1};b_{1})|\EE_{1}(a_{2};b_{2})] + [\EE_{1}(a_{1};b_{1})|b_{2}|a_{2}] \\
  \notag & {} + [\EE_{2}(a_{1};b_{1},b_{2})|a_{2}] + [b_{1}|a_{1}|\EE_{1}(a_{2};b_{2})] - [b_{1}|\EE_{1}(a_{1};b_{2})|a_{2}].
\end{align}
This illustrates the general pattern: For each term in the shuffle product (the first two lines above),
additional terms with the same sign appear where each \(a\)-variable can form an \(\EE\)-term with one or more \(b\)-variables immediately following it.
Hence a product~\([a_{1}|\dots|a_{k}]\circ[b_{1}|\dots|b_{l}]\)
has components of length at most~\(k+l\).
The component of length~\(k+l\) is the usual shuffle product,
the one of length~\(1\)  (occurring only for~\(k=1\) as well as for~\(k=0\) and~\(l=1\)) is given by the maps~\(\EE_{l}\) plus~\(\EE_{01}\),
and the one of length~\(0\) vanishes unless \(k=l=0\).
In particular, one has
\begin{equation}
  \label{eq:prod-bar-1-1}
  [a_{1}]\circ[b_{1}]
  = [a_{1}|b_{1}]+(-1)^{(\deg{a_{1}}-1)(\deg{b_{1}}-1)}\,[b_{1}|a_{1}]
  +\bigl[\EE_{1}([a_{1}],[b_{1}])\bigr].
\end{equation}

The fact that \(\mu_{\BB A}\) is the multiplication of a dga can be expressed by identities involving the maps~\(\EE_{k}\)
or, equivalently, the corresponding suspended operations \(E_{k}\colon A\otimes A^{\otimes k}\to A\).
See~\citehomog{Sec.~\refhomog{sec:def-hga}} for such a list using our sign conventions.

A morphism of hgas~\(A\to A'\) is a morphism of dgas that is compatible with the operations~\(\EE_{k}\)
in the obvious way.

\begin{proposition}[Kadeishvili--Saneblidze]
  \label{thm:def-prod-bar}
  Let \(A\to A'\) be a morphism of hgas.
  Then \(\Bar{A'}{A}\) is naturally an augmented dga
  with unit~\(1_{\BB A}\otimes1_{A'}\), augmentation~\(\epsilon_{\BB A}\otimes\epsilon_{A'}\)
  and product of the form
  \begin{align*}
    (\BarEl{a}{\aa}) \circ (\BarEl{b}{\bb})
    &= (-1)^{\deg{a}\deg{\bb}}\, \BarEl{ab}{(\aa\circ\bb)} \\
    &\qquad + \sum_{i=1}^{l}\pm
    \BarEl{\EE_{i}([a],[b_{l-i+1}|\dots|b_{l}])\,b}{(\aa\circ[b_{1}|\dots|b_{l-i}])}
  \end{align*}
  for~\(\aa\),~\(\bb=[b_{1}|\dots|b_{l}]\in \BB A\) and~\(a\),~\(b\in A'\).
\end{proposition}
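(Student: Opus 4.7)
The plan is to take the formula in the statement as the definition of the product and verify the dga axioms directly. The formula is natural from a twisted tensor product perspective: the first summand \((-1)^{\deg{a}\deg{\bb}}\,\BarEl{ab}{(\aa\circ\bb)}\) is the ``shuffle'' contribution where the \(A'\)-element~\(a\) passes through~\(\bb\in\BB A\) with a Koszul sign, after which the hga product \(\aa\circ\bb\) and the ordinary product \(ab\) are taken componentwise. The correction summands encode the failure of this shuffle to be compatible with the hga structure of~\(A'\): each term records how a tail \([b_{l-i+1}|\dots|b_{l}]\) of~\(\bb\) is absorbed into~\(a\) via the operation~\(\EE_{i}\) on~\(A'\), using that \(g\colon A\to A'\) is an hga morphism to interpret the~\(b_{k}\) in~\(A'\).

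I would begin by checking unitality and the augmentation axiom, both of which are essentially immediate. With \(\aa=\bb=1_{\BB A}\) (so \(l=0\)) the correction sum is empty and the product reduces to \(1_{\BB A}\otimes ab\); when either \(a=1_{A'}\) or some \(b_{k}\) equals the unit, all \(\EE_{i}\)-terms vanish because \(\EE_{i}\) factors through the augmentation ideal. Hence \(1_{\BB A}\otimes 1_{A'}\) is a unit, and \(\epsilon_{\BB A}\otimes\epsilon_{A'}\) kills every term of positive bar length and every \(\EE\)-correction, so it is multiplicative. The Leibniz rule then follows by decomposing the differential via~\eqref{eq:def-d-BAA} into its three parts and matching contributions: the bar and internal~\(A'\) pieces regroup using the Leibniz rule for the dg~bialgebra~\(\BB A\) and for~\(A'\), while the hga differential identities on~\(A'\) convert the twisting term coming from~\(g\,t_{A}\) into exactly the remaining \(\EE\)-correction contributions produced when the differential crosses the multiplication.

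The main obstacle will be associativity. Expanding both sides of \(((\BarEl{a}{\aa})\circ(\BarEl{b}{\bb}))\circ(\BarEl{c}{\cc})=(\BarEl{a}{\aa})\circ((\BarEl{b}{\bb})\circ(\BarEl{c}{\cc}))\) yields a long list of summands, which I would classify by the number and position of \(\EE\)-corrections on the \(A'\)-side. The terms with no such corrections cancel by associativity of \(\mu_{\BB A}\) (part of the hga axioms for~\(A\)) together with associativity of the product on~\(A'\). The remaining mixed terms pair off by the hga associativity identities on~\(A'\) expressing iterated compositions of the~\(\EE_{k}\) as signed sums, transferred to the mixed setting of \(\Bar{A'}{A}\) via the morphism property of~\(g\). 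The identities invoked here are exactly those that ensure \(\mu_{\BB A'}\) is associative on~\(\BB A'\); the present statement is essentially a ``relative'' counterpart, and the verification, while mechanical, reduces to the same bookkeeping of signs and \(\EE\)-decompositions as in the classical case \(A=A'\).
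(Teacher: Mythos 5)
Your plan takes a genuinely different route from the paper. The paper's proof is essentially a citation: it invokes Kadeishvili--Saneblidze~\cite[Thm.~7.1]{KadeishviliSaneblidze:2005} (with signs corrected in~\cite[Rem.~2]{Saneblidze:2009}), translates between their convention~$\EE_{k1}$ and the paper's~$\EE_{1l}$ by transposing the factors in the product, and then observes that the simply-connected hypothesis in the reference can be dropped because the verification only uses the formal hga identities. Your direct verification would, if carried out, establish precisely this last claim, so the two approaches are complementary: the paper outsources the bookkeeping, you propose to redo it. Your reading of the correction terms is also correct --- $\EE_i([a],[b_{l-i+1}|\dots|b_l])$ is computed in~$A'$ after pushing the~$b_j$ through the hga morphism~$g$ --- and a successful direct argument would have the merit of not requiring the convention translation or the informal ``it follows formally'' step.

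The main gap is the signs. You propose to ``take the formula in the statement as the definition'' and verify the dga axioms, but the statement deliberately leaves the signs of the correction terms unspecified as~$\pm$. Without pinning them down, neither the Leibniz rule nor associativity can actually be checked (and both fail for a wrong choice of signs). This is precisely where the historical difficulty lay: the signs in~\cite{KadeishviliSaneblidze:2005} were initially wrong and needed the correction recorded in~\cite{Saneblidze:2009}. A complete direct argument must either derive the signs from the Leibniz constraint as part of the proof or import the corrected ones from the literature, and your plan does neither. Secondarily, your associativity sketch classifies terms by the number of $A'$-side $\EE$-corrections and treats the cancellation as ``mechanical'' once the no-correction terms are disposed of, but the factor $\mathbfit{a}\circ\mathbfit{b}$ in~$\BB A$ already carries $\EE$-terms coming from the hga structure of~$A$, and the required identities genuinely mix $A$-side and $A'$-side operations through the morphism property of~$g$; that interaction is where the real work lives and is passed over in the sketch. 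Your unitality and augmentation checks, by contrast, are sound.
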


\begin{proof}
  This is stated in~\cite[Thm.~7.1]{KadeishviliSaneblidze:2005}
  in the form
  \begin{equation}
  \begin{split}
    (a\otimes\aa) \circ (b\otimes \bb)
    &= (-1)^{\deg\aa\deg{b}}\, ab \otimes (\aa\circ\bb) \\
    &\qquad + \sum_{i=1}^{k} \pm a\,\EE_{i}([a_{1}|\dots|a_{i}],[b])\otimes
    \bigl([a_{i+1}|\dots|a_{k}]\circ\bb\bigr)
  \end{split}
  \end{equation}
  with specified signs (which are corrected in~\cite[Rem.~2]{Saneblidze:2009}).
  Note that in~\cite{KadeishviliSaneblidze:2005} a homotopy Gerstenhaber structure
  is defined to have operations~\(\EE_{k1}\colon \BB_{k}A\otimes \BB_{1}A\to A\), which differs
  from our convention. One can change between the two by transposing the factors
  in the product, \cf~\citegersten{Rem.~\refgersten{rem:other-hga-structure}}.

  More importantly, Kadeishvili--Saneblidze state this
  only for simply connected hgas. However, it follows formally
  from the properties enjoyed by the operations~\(\EE_{k}\) of any hga.
\end{proof}

Note that \(\BB A\) and~\(A'\) are canonically subalgebras of~\(\Bar{A'}{A}\).
By abuse of notation, we write \(\aa\) and~\(a\)
instead of~\(\BarEl{1_{A'}}{\aa}\),~\(\BarEl{a}{1_{\BB A}}\in\Bar{A'}{A}\).

\section{Simplicial sets}

It will be convenient to work with simplicial sets.
We use \cite{May:1968} as our reference.

\subsection{Generalities}
\label{sec:simp-general}

We refer to~\cite[\S\S 1,\,17]{May:1968} for the definition of simplicial sets,
simplicial groups and Kan complexes.
To distinguish simplicial sets from topological spaces,
we write the former as~\(X\),~\(Y\),~\(Z\) and the latter in the form~\(\XX\),~\(\YY\),~\(\ZZ\).
Complex algebraic varieties are still written as~\(\XXX\),~\(\YYY\),~\(\ZZZ\).

Recall that a simplicial set~\(X\) is \newterm{reduced} if it has a single vertex
and \newterm{\(1\)-reduced} if it has a single \(1\)-simplex.
For any simplicial group~\(G\) and any~\(p\ge0\), we write \(1_{p}\in G\) for the simplex
that acts as the identity element of the group of \(p\)-simplices.
For any topological space~\(\XX\) we write \(\Simp(\XX)\) for the associated Kan complex
of singular simplices in~\(\XX\). If \(\GG\) is a topological group, then \(\Simp(\GG)\)
is a simplicial group.

We repeatedly use the fact that a simplicial map between connected Kan complexes
is a homotopy equivalence if and only if it induces isomorphisms
on all homotopy groups, \cf~\cite[Thm.~12.5]{May:1968}.

We write \(C(X)\) for the normalized chain complex of the simplicial set~\(X\)
and \(C^{*}(X)\) for the dga of normalized cochains.
The latter is \(1\)-connected if \(X\) is \(1\)-reduced,
and it has a natural hga structure given by interval cut operations, see~%
\cite[\S 1.6.6]{BergerFresse:2004} as well as~\citehomog{Sec.~\refhomog{sec:cochains-hga}} for our sign conventions.
For~\(\beta\),~\(\gamma\in C^{*}(X)\), the formula
\begin{equation}
  \label{eq:def-cup-one}
\begin{split}
  \beta\cupone\gamma
  &= -\EE_{1}(\desusp\otimes\desusp)(\beta\otimes\gamma)
  = (-1)^{\deg{\beta}-1}\,\EE_{1}([\beta],[\gamma]) \\
  &= -\transpp{\AWu{(1,2,1)}}(\beta\otimes\gamma)
\end{split}  
\end{equation}
defines the usual \(\cupone\)-product,
\cf~\citehomog{eqs.~\eqrefhomog{eq:cupone-d},~\eqrefhomog{eq:hirsch-formula}}.
Here \(\transpp{\AWu{(1,2,1)}}\) denotes the transpose of the interval
cut operation determined by the surjection~\((1,2,1)\).
The \(\cupone\)-product vanishes if one argument is of degree~\(0\).

\subsection{Borel constructions}

Let \(G\) be a simplicial group. A free \(G\)-action on a simplicial set~\(X\)
induces a principal \(G\)-bundle~\(X\to X/G\).
The universal \(G\)-bundle \(EG\to BG\)
is described in~\cite[\S 21]{May:1968}, see also~\citegersten{Sec.~\refgersten{sec:universal-bundles}}.
The canonical basepoints of~\(EG\) and~\(BG\) are denoted by~\(e_{0}\) and~\(b_{0}\), respectively.
We define the \newterm{Borel construction} and the \newterm{\(G\)-equivariant cohomology}
of a simplicial \(G\)-set~\(X\) in the usual way, that is,
\begin{equation}
  X_{G} = EG \times_{G} X = (EG\times X)/G
  \qquad\text{and}\qquad
  H_{G}^{*}(X)=H^{*}(X_{G}).
\end{equation}
The equivariant cohomology of~\(X\)
is an \(H^{*}(BG)\)-algebra via the canonical projection~\(X_{G}\to BG\).

\goodbreak

\begin{lemma} \( \)
  \label{thm:HG-top-sing}
  \begin{enumroman}
  \item
    \label{thm:HG-top-sing-1}
    Let \(G\) be a simplicial group, \(E\) a contractible Kan complex
    with a free \(G\)-action and \(X\) a simplicial \(G\)-set.
    Then there is a homotopy equivalence
    \(E\times_{G} X\to EG\times_{G}X\) whose homotopy class is natural in the given data.
  \item
    \label{thm:HG-top-sing-2}
    Let \(\GG\) be a topological group and \(\XX\) a \(\GG\)-space.
    Then there is a homotopy equivalence~\(\Simp(\EEGG\times_{\GG}\XX)\to E\Simp(\GG)\times_{\Simp(\GG)}\Simp(\XX)\)
    whose homotopy class is natural in~\(\GG\) and~\(\XX\).
    In particular, the topological equivariant cohomology~\(H_{\GG}^{*}(\XX)\)
    and the simplicial equivariant cohomology~\(H_{\Simp(\GG)}^{*}(\Simp(\XX))\)
    are isomorphic.
  \item
    \label{thm:HG-top-sing-3}
    Let \(X\) and~\(X'\) be Kan complexes with actions of the simplicial groups~\(G\)
    and~\(G'\), respectively,
    and let \(X\to X'\) be a map that is  equivariant with respect
    to a map~\(G\to G'\) of simplicial groups.
    If both maps are homotopy equivalences,
    then so is induced map~\(EG\times_{G}X\to EG'\times_{G'}X'\).
    In particular, \(H_{G}^{*}(X)\) and \(H_{G'}^{*}(X')\) are isomorphic.
  \end{enumroman}
\end{lemma}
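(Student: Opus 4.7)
The plan is to handle the three parts in order, using part~(i) as the engine for part~(ii), and a fibration/long-exact-sequence argument for part~(iii).

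For~(i), I would build the equivalence via a symmetric ``middle'' Borel construction. Set $\tilde E = E \times EG$ with the diagonal $G$-action; it is a contractible Kan complex carrying a free $G$-action, and the two $G$-equivariant projections onto the factors fit into a natural zigzag
\[
  E \times_G X \longleftarrow \tilde E \times_G X \longrightarrow EG \times_G X.
\]
Each arrow has the form $(Y_1 \times Y_2 \times X)/G \to (Y_i \times X)/G$ where $G$ acts freely on the target, and a standard calculation shows that the map is a Kan fibration whose fiber is $Y_{3-i}$; since that fiber is contractible, each arrow is a weak equivalence of Kan complexes and hence a homotopy equivalence. Composing one arrow with a homotopy inverse of the other yields the required homotopy class, and naturality in the triple $(G,E,X)$ is manifest because the whole zigzag is functorial.

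For~(ii), I would specialize~(i) to $G = \Simp(\GG)$ and $E = \Simp(\EEGG)$. The singular complex of a contractible space is a contractible Kan complex, and a free topological $\GG$-action induces a free action of $\Simp(\GG)$ because pointwise freeness forces the stabilizer of any $p$-simplex to be trivial. Part~(i) then gives a natural homotopy equivalence $\Simp(\EEGG) \times_{\Simp(\GG)} \Simp(\XX) \to E\Simp(\GG) \times_{\Simp(\GG)} \Simp(\XX)$. To finish, it suffices to identify the source with $\Simp(\EEGG \times_\GG \XX)$: the canonical comparison map is a bijection on $p$-simplices for every $p$, because any continuous simplex $|\Delta^p|\to \EEGG \times_\GG \XX$ lifts to $\EEGG \times \XX$ (the quotient is a principal $\GG$-bundle and $|\Delta^p|$ is contractible and paracompact), and any two lifts differ by a $p$-simplex of $\Simp(\GG)$, which is exactly the ambiguity of a representative in the simplicial Borel construction.

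For~(iii), I would rely on the Kan fibration $X \to EG \times_G X \to BG$ from \cite{May:1968}, and its analogue with primes. The hypotheses give a homotopy equivalence $X \to X'$ on fibers, while the classifying space functor preserves homotopy equivalences of simplicial groups and yields a homotopy equivalence $BG \to BG'$ on bases. The five lemma applied to the long exact sequences of homotopy groups shows that $EG \times_G X \to EG' \times_{G'} X'$ induces isomorphisms on all $\pi_n$, and by the criterion recalled at the start of \Cref{sec:simp-general} this map is a homotopy equivalence (applied componentwise if $X$ is not connected). The main obstacle is the identification step in~(ii): one must argue carefully that $\Simp$ of a principal $\GG$-bundle quotient coincides with the corresponding quotient of $\Simp$-images, and that this identification is natural; everything else is a routine comparison of fibrations.
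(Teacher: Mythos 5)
Your proof is correct, and for part~(i) it takes a genuinely different route from the paper's. The paper produces a direct map by taking a classifying map $h\colon E/G\to BG$ for the principal bundle $E\to E/G$, showing $h$ is a homotopy equivalence via the long exact sequences of the two bundles, and then repeating the comparison for the associated bundles with fiber $X$. Your zigzag through $\tilde E\times_G X$ with $\tilde E=E\times EG$ is in one respect cleaner: both projections out of $\tilde E\times_G X$ are associated bundles (twisted Cartesian products) over principal $G$-bundles with contractible Kan fiber ($EG$ or $E$, respectively), hence trivial Kan fibrations, hence strong deformation retractions; this sidesteps the question of whether the associated bundles with fiber $X$ are Kan fibrations when $X$ is not assumed Kan, a point the paper's long-exact-sequence argument passes over. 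Parts~(ii) and~(iii) agree with the paper in substance; in~(ii) the paper simply asserts the identification $\Simp(\EEGG\times_\GG\XX)=\Simp(\EEGG)\times_{\Simp(\GG)}\Simp(\XX)$ that you verify by lifting simplices through the principal $\GG$-bundle $\EEGG\times\XX\to\EEGG\times_\GG\XX$ (using a translation function to see that two lifts differ by a singular simplex of $\GG$), and in~(iii) both arguments compare the fibrations $X\to EG\times_G X\to BG$ and $X'\to EG'\times_{G'}X'\to BG'$ and apply the five lemma.
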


\begin{proof}
  We start with the first claim. The principal \(G\)-bundle~\(E\to B=E/G\)
  is induced by a map~\(h\colon B\to BG\), unique up to homotopy.
  Comparison of the long exact sequences of homotopy groups \cite[Thm.~7.6]{May:1968}
  for the two bundles shows that \(h\) is a homotopy equivalence.
  An analogous argument for the associated bundles \(E\times_{G}X\) and~\(EG\times_{G}X\)
  completes the argument.

  The second claim now follows because \(\Simp(\EEGG)\) is a contractible
  Kan complex with a free action of~\(\Simp(\GG)\), and
  \(\Simp(\EEGG\times_{\GG}\XX)=\Simp(\EEGG)\times_{\Simp(\GG)}\Simp(\XX)\).

  For the third claim one compares the two Borel constructions
  as in part~\ref{thm:HG-top-sing-1}.
\end{proof}

\begin{proposition}
  \label{thm:dga-bar}
  Let \(G\) be a simplicial group and \(K\lhd G\) a normal subgroup
  such that the quotient~\(L=G/K\) is \(1\)-reduced.
  Moreover, let \(Z\) be a simplicial \(G\)-set,
  and write \(\iota\colon Z_{K}\hookrightarrow Z_{G}\) for the canonical inclusion.
  Then the map
  \begin{align}
    \Bar{C^{*}(Z_{G})}{C^{*}(BL)} &\to C^{*}(Z_{K}), \\
    [\gamma_{1}|\dots|\gamma_{k}]\,\beta &\mapsto
    \begin{cases}
      \iota^{*}(\beta) & \text{if \(k=0\),} \\
      0 & \text{otherwise}
    \end{cases}
  \end{align}
  is a natural quasi-iso\-mor\-phism of dgas.
\end{proposition}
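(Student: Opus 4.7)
The plan is to interpret the proposition as the Eilenberg--Moore quasi-isomorphism for the fibration of Borel constructions obtained from the extension $K \hookrightarrow G \twoheadrightarrow L$. Applying the classifying space construction gives a principal $L$-bundle $BK \to BG$, and twisting by the $G$-action on~$Z$ yields a fibration $Z_G \to BG \to BL$. With the standard simplicial models, the preimage of $b_0 \in BL$ under this composition is literally $Z_K$, identifying $\iota$ with the fiber inclusion; in particular, the composition $Z_K \to Z_G \to BL$ is the constant map to $b_0$, so the structure map $g\colon C^*(BL)\to C^*(Z_G)$ satisfies $\iota^*g|_{\bar C^*(BL)}=0$.

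This vanishing is the technical heart that makes the proposed formula a chain map: the twisting-cochain summand of the bar differential~\eqref{eq:def-d-bar}, the only mechanism that could produce a nontrivial bar-length-zero contribution from a positive-length element, is annihilated by $\iota^*$. For multiplicativity I invoke \Cref{thm:def-prod-bar}. The main shuffle-type term of the product preserves positive bar length whenever either factor has positive bar length, while the corrections involving the hga operations $\EE_i$ take their arguments in $g(\bar C^*(BL))$, which pulls back to zero on $Z_K$, so multilinearity kills those terms. The remaining case, two bar-length-zero factors, reduces to $\iota^*$ of a product in $C^*(Z_G)$, hence is multiplicative.

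For the quasi-isomorphism I filter $\Bar{C^*(Z_G)}{C^*(BL)}$ by bar length. Since $L$ is $1$-reduced, $BL$ is $2$-reduced, so this filtration is finite in each cohomological degree and the associated spectral sequence converges strongly with $E_2$-page $\Tor^*_{H^*(BL)}(\kk,H^*(Z_G))$. This is precisely the Eilenberg--Moore spectral sequence of the fibration $Z_K \to Z_G \to BL$, converging to $H^*(Z_K)$, and the proposed map realizes the edge homomorphism. A comparison argument---or, alternatively, a reduction via \Cref{thm:Psi-quiso} to the universal case $Z = EG$, where $Z_G = BG$, $Z_K = BK$, and an explicit contracting-homotopy analysis is available---then identifies the induced map on each $E_\infty$ term as an isomorphism, yielding the desired quasi-isomorphism. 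Naturality in the triple $(G,K,Z)$ is immediate from naturality of the cochain functor, of the bar construction, and of the inclusion $\iota$.

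The main obstacle is this last step: while the chain-map and algebra-map properties reduce formally to the single vanishing $\iota^*g|_{\bar C^*(BL)}=0$, establishing Eilenberg--Moore convergence and matching the bar-filtration spectral sequence with the geometric one in the required multiplicative sense---and doing so over an arbitrary principal ideal domain rather than just a field---requires careful bookkeeping, particularly to maintain coherence with the hga structure used in \Cref{thm:def-prod-bar}.
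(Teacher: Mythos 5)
Your proof follows essentially the same route as the paper: reinterpret $Z_K$ (up to the homotopy equivalence $Z_K\hookrightarrow EG\times_K Z$) as the fibre of $Z_G\to BL$, invoke the Eilenberg--Moore theorem to get a quasi-isomorphism of complexes from the one-sided bar construction, and observe that multiplicativity follows because positive-degree cochains from $BL$ pull back to zero on $Z_K$, annihilating the $\EE_i$-corrections of \Cref{thm:def-prod-bar}. One caveat: the ``alternative'' reduction to $Z=EG$ via \Cref{thm:Psi-quiso} does not actually apply, since for general $Z$ there is no $G$-equivariant quasi-isomorphism $Z\to EG$ (equivalently, $Z_G\to BG$ need not be a quasi-isomorphism), but your main spectral-sequence argument is sound and matches the paper's.
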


The product on the one-sided bar construction has been defined in \Cref{thm:def-prod-bar}.

\begin{proof}
  It follows from \Cref{thm:HG-top-sing}\,\ref{thm:HG-top-sing-1}
  that the inclusion~\(Z_{K}\hookrightarrow EG\times_{K} Z\) is a homotopy equivalence.
  From the diagram
  \begin{equation}
    \qquad\quad
    \begin{tikzcd}
      EG\times_{K} Z \arrow{d}[left]{\pi} \arrow{r} & EG/K \arrow{d} \arrow{r} & EL \arrow{d} \\
      \mathllap{Z_{G}={}} EG\times_{G} Z \arrow{r} & BG \arrow{r} & BL
    \end{tikzcd}
  \end{equation}
  we infer that \(EG\times_{K} Z\) can be considered as the total space
  of the pull-back of the universal \(L\)-bundle
  along the composition~\(Z_{G}\to BG\to BL\).
  By the Eilenberg--Moore theorem, compare~\citehomog{Prop.~\refhomog{thm:Eilenberg-Moore}}, the map
  \begin{equation}
    \label{eq:bar-ZK}
    \begin{split}
    \Bar{C^{*}(Z_{G})}{C^{*}(BL)} &\to C^{*}(EG\times_{K}Z), \\
    [\gamma_{1}|\dots|\gamma_{k}]\,\beta &\mapsto
    \begin{cases}
      \pi^{*}(\beta) & \text{if \(k=0\),} \\
      0 & \text{otherwise}
    \end{cases}
    \end{split}
  \end{equation}
  is a quasi-iso\-mor\-phism of complexes, hence so is it prolongation to~\(C^{*}(Z_{K})\).

  To conclude, we observe that the map~\eqref{eq:bar-ZK} is multiplicative
  because cochains on~\(BL\) of positive degree pull back to~\(0\) on~\(Z_{K}\).
\end{proof}

\subsection{Tori}
\label{sec:tori}

A \newterm{simplicial torus} is the classifying space~\(T=BN\)
of a lattice~\(N\) of finite rank. Since \(N\) is abelian, \(BN\) is an abelian simplicial group.
Because the simplicial classifying space construction commutes with
Cartesian products, any simplicial torus is the product of \newterm{simplicial circles}~\(B\Z\).
We prefer to work with simplicial tori because their combinatorial nature
permits explicit constructions on the chain level.
The following simple observation allows us to pass from a compact torus~\((S^{1})^{n}\)
or an algebraic torus~\((\C^{\times})^{n}\) to a simplicial torus.

To set the context for it, we note that any simplicial torus~\(T=BN\) is reduced.
The assignment
\begin{equation}
  \label{eq:iso-N-T}
  N\to T,
  \quad
  x \mapsto \barsim{x}
\end{equation}
is a natural bijection between the elements of~\(N\) and the \(1\)-simplices in~\(T\).
As a consequence, we get a natural isomorphism of groups~\(N=\pi_{1}(T)=H_{1}(T;\Z)\).
Under this identification, any element~\(x\in H_{1}(T;\Z)\)
has the tautological representative~\(\barsim{x}\in C_{1}(T;\Z)\).

\begin{lemma}
  \label{thm:simplicial-torus-inclusion}
  Let \(\TT\) be a compact or algebraic torus, and let \(T=B N\) be
  the simplicial torus associated to~\(N=\pi_{1}(\TT)\).
  There is a morphism of simplicial groups~\(T\to\Simp(\TT)\)
  that is natural in~\(\TT\) and a homotopy equivalence.
\end{lemma}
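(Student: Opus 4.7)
The plan is to write down the morphism explicitly using the universal covering of~$\TT$ and then to verify it is a homotopy equivalence via the criterion recalled in \Cref{sec:simp-general}. In both the compact and the algebraic case the universal cover of~$\TT$ is naturally a real vector space~$V$ (namely $\R\otimes_{\Z}N$ or $\C\otimes_{\Z}N$, respectively), equipped with its structure as a topological abelian group under addition. The exponential map gives a covering homomorphism $\pi\colon V\to\TT$ whose kernel is canonically identified with~$N=\pi_{1}(\TT)$.

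First I define $\phi\colon T=BN\to\Simp(\TT)$ on $p$-simplices by sending a tuple $(x_{1},\dots,x_{p})\in N^{p}$ to the composition of the affine map $\Delta^{p}\to V$ with vertices $0,x_{1},x_{1}+x_{2},\dots,x_{1}+\dots+x_{p}$ and the covering map~$\pi$. Checking the simplicial identities is then direct: the inner face~$d_{i}$ for $0<i<p$ omits the vertex $x_{1}+\dots+x_{i}$, reproducing the affine simplex of $(x_{1},\dots,x_{i}+x_{i+1},\dots,x_{p})$; the face~$d_{p}$ omits the last vertex and yields $(x_{1},\dots,x_{p-1})$; and $d_{0}$ omits the vertex~$0$, producing a simplex that differs from the one for $(x_{2},\dots,x_{p})$ only by translation by $x_{1}\in N$, which is invisible after applying~$\pi$. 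Degeneracies correspond to inserting a zero into the tuple and so to the standard degeneracies of~$\Delta^{p+1}$. Since the affine formula is linear in the~$x_{j}$ and $\pi$ is a group homomorphism, $\phi$ is a morphism of simplicial abelian groups. Naturality in~$\TT$ is immediate from the unique lifting of continuous homomorphisms of tori to linear maps between the universal covers.

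To finish, observe that both $T=BN$ and $\Simp(\TT)$ are connected Kan complexes (being simplicial groups) and Eilenberg--MacLane spaces of type~$K(N,1)$: the first by construction of the classifying space, the second because $\TT$ is homotopy equivalent to a compact torus (using $\C^{\times}\simeq S^{1}$ componentwise in the algebraic case). By the criterion recalled in \Cref{sec:simp-general} it suffices to check that $\phi_{*}$ is an isomorphism on~$\pi_{1}$. Under the identification~\eqref{eq:iso-N-T}, the generator $\barsim{x}\in T_{1}$ corresponding to $x\in N$ maps to the loop $t\mapsto\pi(tx)$ in~$\TT$, which is precisely the standard representative of $x\in\pi_{1}(\TT)=N$ coming from the covering~$\pi$. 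Hence $\phi_{*}\colon N\to N$ is the identity. I do not expect any serious obstacle; the main subtlety is to confirm that the two identifications of~$N$ with $\pi_{1}(\TT)$ in play---one via the universal covering and one via the simplicial $1$-simplex convention---agree, which is clear by inspection of the representing loops.
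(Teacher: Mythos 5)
Your proposal is correct and is essentially the same construction as the paper's: the paper's formula $\sigma_{\aa}(t_0,\dots,t_n)=\prod_{k=1}^n\lambda_{a_k}(t_k+\dots+t_n)$, with $\lambda_a(t)=\pi(ta)$ the period-one one-parameter subgroup, unpacks to precisely your affine simplex $\sum_k(t_k+\dots+t_p)x_k$ in the universal cover followed by the covering projection $\pi$. You spell out the verification of the simplicial identities and the matching of the two identifications of $\pi_1(\TT)$ with $N$, which the paper leaves to the reader, but the mathematical content and the homotopy-equivalence argument (isomorphism on $\pi_1$ between two $K(N,1)$ Kan complexes) are identical.
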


\begin{proof}
  Any~\(a\in N=\pi_{1}(\TT)\) has a canonical representative,
  namely the \(1\)-parameter subgroup~\(\lambda_{a}\) with period~\(1\).
  Therefore, if to the \(n\)-simplex~\(\aa=[a_{1}|\dots|a_{n}]\in B N\)
  we associate the singular \(n\)-simplex~\(\sigma_{\aa}\) in~\(\TT\) given by
  \begin{equation}
    \label{eq:def-1-psg}
    \sigma_{\aa}(t_{0},\dots,t_{n}) =
    \prod_{k=1}^{n} \lambda_{a_{k}}(t_{k}+\dots+t_{n}),
  \end{equation}
  then we get a map of simplicial groups, as one verifies directly
  using the simplicial structure of~\(B N\).
  By construction, this map is natural in~\(\TT\) and an isomorphism on~\(\pi_{1}\),
  hence a homotopy equivalence between the two Eilenberg--Mac\,Lane complexes.
\end{proof}

We now incorporate the \(1\)-reduced space~\(BT=B(BN)\) into our discussion.
In addition to~\eqref{eq:iso-N-T} we have the natural bijection
\begin{equation}
  \label{eq:iso-N-BT}
  N\to BT,
  \quad
  x \mapsto \barbarsim{x}
\end{equation}
between \(N\) (that is, the \(0\)-simplices
of~\(N\), considered as a simplicial group)
and the \(2\)-simplices in~\(BT\).
Here we have written \(\barbarsim{x}\in BT\)
instead of the more systematic notation~\(\bigbarsim{\barsim{x},1_{1},1_{0}}\)
used in~\cite[p.~87]{May:1968}.
This illustrates that we drop redundant entries from simplices in~\(T\) and~\(BT\).
For instance, if we consider \(N\) as a simplicial group,
then the \(1\)-simplex~\(\barsim{x}\in T\) should also be written as~\(\barsim{x,1_{0}}\).

Dually, both the \(1\)-cochains in~\(T\) and the \(2\)-cochains in~\(BT\)
are naturally isomorphic to the \(\kk\)-module of functions~\(N\to\kk\).
The boundaries of a \(2\)-simplex~\(\barsim{x,y}\in T\)
and a \(3\)-simplex~\(\bigbarsim{\barsim{x,y},\barsim{z}}\in BT\)
are
\begin{align}
  d \, \barsim{x,y} &= \barsim{y}-\barsim{x+y}+\barsim{x}, \\
  \label{ed:d-3-simplex}
  d \, \bigbarsim{\barsim{x,y},\barsim{z}} &=
  \barbarsim{z}-\barbarsim{y+z}+\barbarsim{x+y}-\barbarsim{x}.
\end{align}
This implies that \(1\)-cocycles in~\(T\) and the \(2\)-cocycles in~\(BT\)
both correspond to the \(\kk\)-module~\(\Hom(N,\kk)\) of additive functions~\(N\to\kk\).
Since there are no non-trivial coboundaries, this yields canonical isomorphisms
\begin{equation}
  \label{eq:iso-HomNkk-Z1-Z2}
\begin{split}
  \Hom(N,\kk) &= Z^{1}(T) = Z^{2}(BT) \\
  &= H^{1}(T) = H^{2}(BT).
\end{split}  
\end{equation}
Given an element~\(\alpha\) in~\(\Hom(N,\kk)\) or~\(H^{1}(T)\),
we write \(\hatalpha\in Z^{2}(BT)\) for the corresponding cocycle.

Let~\(\alpha_{1}\),~\(\alpha_{2}\colon N\to\Z\) be additive
and define functions~\(\alpha_{11}\),~\(\alpha_{12}\colon N\to\Z\) by
\begin{equation}
  \alpha_{11}(x) = \frac{\alpha_{1}(x)(\alpha_{1}(x)-1)}{2}
  \qquad\text{and}\qquad
  \alpha_{12}(x) = \alpha_{1}(x)\,\alpha_{2}(x)
\end{equation}
for~\(x\in N\).
Note that \(\alpha_{11}\) is well-defined
because one of the terms in the numerator is even.

\begin{lemma}
  \label{thm:d-q-ij}
  Let \(\hatalpha_{11}\),~\(\hatalpha_{12}\in C^{2}(BT)\)
  be the cochains corresponding to~\(\alpha_{11}\) and~\(\alpha_{12}\).
  \begin{enumroman}
  \item
    \label{thm:d-q-ij-1}
    The differentials of~\(\hatalpha_{11}\) and~\(\hatalpha_{12}\) satisfy
    \begin{equation*}
      d\hatalpha_{11} = \hatalpha_{1}\cupone\hatalpha_{1}
      \qquad\text{and}\qquad
      d\hatalpha_{12} = \hatalpha_{1}\cupone\hatalpha_{2}+\hatalpha_{2}\cupone\hatalpha_{1}.
    \end{equation*}
  \item
  \label{thm:d-q-ij-2}
    In \(\BB  C^{*}(T)\) one has
    \begin{equation*}
      d\,[\hatalpha_{11}] = [\hatalpha_{1}]\circ[\hatalpha_{1}]
      \qquad\text{and}\qquad
      d\,[\hatalpha_{12}] = [\hatalpha_{1}]\circ[\hatalpha_{2}]+[\hatalpha_{2}]\circ[\hatalpha_{1}].
    \end{equation*}
  \end{enumroman}
\end{lemma}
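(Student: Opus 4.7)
The plan is to establish part~\ref{thm:d-q-ij-1} by a direct evaluation on a generic $3$-simplex of~$BT$, and then to deduce part~\ref{thm:d-q-ij-2} as a formal consequence using the structural formulas for the bar complex.

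For part~\ref{thm:d-q-ij-1}, I would evaluate both sides on an arbitrary $3$-simplex $\tau = \bigbarsim{\barsim{x,y},\barsim{z}}$ of~$BT$. The explicit boundary formula~\eqref{ed:d-3-simplex} together with the definition of~$\hatalpha_{11}$ yields
\[
  (d\hatalpha_{11})(\tau) = \alpha_{11}(z) - \alpha_{11}(y+z) + \alpha_{11}(x+y) - \alpha_{11}(x).
\]
The key algebraic input is the polarization identity
\[
  \alpha_{11}(u+v) - \alpha_{11}(u) - \alpha_{11}(v) = \alpha_{1}(u)\,\alpha_{1}(v),
\]
which follows immediately from the additivity of~$\alpha_{1}$ by expanding the definition $\alpha_{11}(u+v) = (\alpha_{1}(u)+\alpha_{1}(v))(\alpha_{1}(u)+\alpha_{1}(v)-1)/2$. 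Applied to the pairs $(y,z)$ and $(x,y)$, the displayed sum collapses to $\alpha_{1}(x)\alpha_{1}(y) - \alpha_{1}(y)\alpha_{1}(z)$. For the right-hand side I would unravel $\hatalpha_{1}\cupone\hatalpha_{1}$ using~\eqref{eq:def-cup-one}: since $\cupone$ is the signed transpose of the interval cut operation for the surjection~$(1,2,1)$, its evaluation on~$\tau$ is a signed sum of pairwise products $\hatalpha_{1}(d_{i}\tau)\,\hatalpha_{1}(d_{j}\tau)$, whose face values are read off from~\eqref{ed:d-3-simplex} as $\barbarsim{z}$, $\barbarsim{y+z}$, $\barbarsim{x+y}$, $\barbarsim{x}$ and yield the same expression. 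The identity for~$\hatalpha_{12}$ is entirely analogous and rests on the bilinear variant
\[
  \alpha_{12}(u+v) - \alpha_{12}(u) - \alpha_{12}(v) = \alpha_{1}(u)\alpha_{2}(v) + \alpha_{2}(u)\alpha_{1}(v),
\]
which is again a direct consequence of the additivity of~$\alpha_{1}$ and~$\alpha_{2}$.

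For part~\ref{thm:d-q-ij-2}, I would translate the cochain-level identities of part~\ref{thm:d-q-ij-1} into $\BB C^{*}(T)$ using the structural formulas of \Cref{sec:bar}. On a length-$1$ element the differential~\eqref{eq:bar-differential} reduces to $d\,[\beta] = -[d\beta]$, while the formula~\eqref{eq:prod-bar-1-1} expresses the product of two length-$1$ elements as a sum of shuffle terms plus a single length-$1$ contribution involving~$\EE_{1}$. Substituting the relation between $\EE_{1}$ and $\cupone$ given by~\eqref{eq:def-cup-one} and invoking part~\ref{thm:d-q-ij-1} then yields the desired identities at once.

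The main obstacle is sign bookkeeping. The conceptual content is the elementary polarization identity for a quadratic function on an abelian group; what requires attention is aligning the sign conventions used in the boundary formula~\eqref{ed:d-3-simplex}, in the transpose of the interval cut operation defining~$\cupone$, and in the bar differential and hga product, so that all three agree at the end.
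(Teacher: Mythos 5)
Your strategy coincides with the paper's: both evaluate on a generic $3$-simplex $\bigbarsim{\barsim{x,y},\barsim{z}}$ of $BT$, use the explicit boundary formula~\eqref{ed:d-3-simplex} together with the interval-cut description of $\cupone$, and deduce part~(ii) from part~(i) via $d\,[\beta]=-[d\beta]$ and the length-one component formula~\eqref{eq:prod-bar-1-1}. Isolating the polarization identity $\alpha_{11}(u+v)-\alpha_{11}(u)-\alpha_{11}(v)=\alpha_{1}(u)\,\alpha_{1}(v)$ (and its bilinear analogue) is a pleasant way to make explicit the algebra that the paper carries out inline; there is no methodological difference.

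There is, however, one concrete sign slip that you should flag. In the conventions of this paper the cochain differential satisfies $(d\gamma)(\sigma)=-\gamma(d\sigma)$ (the paper even spells this out in the proof as $-(d\gamma)(\sigma)=\gamma(d\sigma)$), so your displayed evaluation
\[
  (d\hatalpha_{11})(\tau) = \alpha_{11}(z) - \alpha_{11}(y+z) + \alpha_{11}(x+y) - \alpha_{11}(x)
\]
has the overall sign wrong. Running your polarization argument with this formula gives $\alpha_{1}(y)\bigl(\alpha_{1}(x)-\alpha_{1}(z)\bigr)$, which is the negative of the value $\alpha_{1}(y)\,\alpha_{1}(z-x)$ that the right-hand side $(\hatalpha_{1}\cupone\hatalpha_{1})(\tau)$ actually takes once one carries through the explicit minus sign built into~\eqref{eq:def-cup-one} and the signs in $\AWu{(1,2,1)}(\tau)=\barbarsim{y+z}\otimes\barbarsim{x}-\barbarsim{x+y}\otimes\barbarsim{z}$. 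You anticipate sign bookkeeping as the remaining obstacle, and indeed this is precisely the spot where it bites; once the global minus on the left side is restored, your computation agrees with the paper's. The passage from~(i) to~(ii) as you describe it is correct.
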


\begin{proof}
  As above, we write
  \(\sigma=\bigbarsim{\barsim{x,y},\barsim{z}}\) for a \(3\)-simplex in~\(BT\).
  Then
  \begin{equation}
    \partial_{0}\sigma = \barbarsim{z},
    \quad
    \partial_{1}\sigma = \barbarsim{y+z},
    \quad
    \partial_{2}\sigma = \barbarsim{x+y},
    \quad
    \partial_{3}\sigma = \barbarsim{x},
  \end{equation}
  by~\eqref{ed:d-3-simplex}, hence
  \begin{equation}
    \AWu{(1,2,1)}(\sigma) = \barbarsim{y+z}\otimes\barbarsim{x}-\barbarsim{x+y}\otimes\barbarsim{z},
  \end{equation}
  \cf~\citegersten{Ex.~\refgersten{ex:steenrod}}. 
  We therefore have
  \begin{equation}
    -(d\gamma)(\sigma) = \gamma(d\sigma)
    = \gamma\bigl(\barbarsim{z}-\barbarsim{y+z}+\barbarsim{x+y}-\barbarsim{x}\bigr)
  \end{equation}
  for any~\(\gamma\in C^{2}(BL)\) as well as
  \begin{equation}
  \begin{split}
    (\hatalpha_{1}\cupone\hatalpha_{2})(\sigma)
    &= -\bigl(\,\transpp{\AWu{(1,2,1)}}(\hatalpha_{1}\otimes\hatalpha_{2})\bigr)(\sigma)
    = -(\hatalpha_{1}\otimes\hatalpha_{2})\bigl(\AWu{(1,2,1)}(\sigma)\bigr) \\
    &= -\hatalpha_{1}(\barbarsim{y+z})\,\hatalpha_{2}(\barbarsim{x})
      - \hatalpha_{1}(\barbarsim{x+y})\,\hatalpha_{2}(\barbarsim{z}) \\
    &= -\alpha_{1}(y+z)\,\alpha_{2}(x)
      + \alpha_{1}(x+y)\,\alpha_{2}(z).
  \end{split}
  \end{equation}

  Using the definitions of~\(\alpha_{11}\) and~\(\alpha_{12}\), we get
  \begin{align}
    (d\hatalpha_{11})(\sigma)
    &= \alpha_{1}(y)\,\alpha_{1}(z-x)
    = (\hatalpha_{1}\cupone\hatalpha_{1})(\sigma), \\
  \begin{split}
    (d\hatalpha_{12})(\sigma)
    &= \alpha_{1}(y)\,\alpha_{2}(z-x) + \alpha_{1}(z-x)\,\alpha_{2}(y) \\
    &= (\hatalpha_{1}\cupone\hatalpha_{2})(\sigma) + (\hatalpha_{2}\cupone\hatalpha_{1})(\sigma),
  \end{split}    
  \end{align}
  which proves the first part.

  The second claim now follows from the definition~\(d\,[\gamma]=-[d\gamma]\)
  of the differential on a desuspension together with the identity~\eqref{eq:prod-bar-1-1}
  and the identification~\eqref{eq:def-cup-one}
  of the \(\cupone\)-product with the operation~\(\EE_{1}\).
\end{proof}

\begin{remark}
  \label{rem:cup-two}
  In terms of interval cut operations, the \(\cuptwo\)-product is given
  by the transpose of the signed surjection \(-(1,2,1,2)\), and
  \(\AWu{(1,2,1,2)}(\barbarsim{x})=-\barbarsim{x}\otimes\barbarsim{x}\)
  for any \(2\)-simplex~\(\barbarsim{x}\) in~\(BT\), \cf~\citegersten{Ex.~\refgersten{ex:steenrod}}.
  Hence \(\hatalpha_{12}=\hatalpha_{1}\cuptwo\hatalpha_{2}\),
  which explains the formula for~\(d\hatalpha_{12}\).
\end{remark}

\section{Partial quotients and toric varieties}
\label{sec:toric}

Let \(\Sigma\) be a simplicial complex or, more generally, a simplicial poset
on the vertex set~\(V\), \cf~\cite[Sec.~2.8]{BuchstaberPanov:2015}.
We always assume \(\Sigma\) and~\(V\) to be finite,
and we allow \newterm{ghost vertices}~\(v\in V\) not occurring in~\(\Sigma\).
We denote the complete simplicial complex on~\(V\) by the same letter.
The \newterm{folding map}~\(\Sigma\to V\) sends every~\(\sigma\in\Sigma\) to its vertex set.
Let \(\Sigma\) and~\(\Sigma'\) be two simplicial posets on the same vertex set~\(V\).
A \newterm{vertex-preserving morphism}~\(\Sigma\to\Sigma'\) is a morphism of posets
compatible with the folding maps.
The folding map itself is vertex-preserving,
as is the inclusion map of a subcomplex of a simplicial complex.

We follow our notation for simplicial sets and topological spaces
introduced in \Cref{sec:simp-general}.
The \newterm{moment-angle complex} of the simplicial poset~\(\Sigma\) is
the topological space
\begin{equation}
  \ZZ_{\Sigma} = \colim_{\sigma\in\Sigma}\ZZ_{\sigma}
\end{equation}
where
\begin{equation}
  \ZZ_{\sigma} = (\DD^{2})^{\sigma}\times(\SS^{1})^{V\setminus\sigma}.
\end{equation}
The compact torus~\(\TT=(\SS^{1})^{V}\) acts on~\(\ZZ_{\Sigma}\) in the canonical way.
Any vertex-preserving morphism~\(\Sigma\to\Sigma'\) of simplicial posets
canonically induces a \(\TT\)-equiv\-ari\-ant map~\(\ZZ_{\Sigma}\to\ZZ_{\Sigma'}\).
Let \(\KK\subset\TT\) be a closed subgroup with quotient~\(\LL\) of rank~\(n\)
and let \(\XX_{\Sigma}=\ZZ_{\Sigma}/\KK\) be the corresponding partial quotient.

For a simplicial rational fan~\(\Sigma\) in~\(\R^{n}\),
we freely use the same letter for the associated simplicial complex
on the vertex set~\(V\) which corresponds to the rays in~\(\Sigma\).
The canonical map~\(\R^{V}\to\R^{n}\) sends each ray~\(\rho\in\Sigma\) to its minimal integral representative~\(x_{\rho}\).
If \(\Sigma\) does not span \(\R^{n}\), we choose a basis for a lattice complement to~\(\lin\Sigma\,\cap\Z^{n}\)
and add corresponding ghost vertices to~\(V\). As a result,
the canonical map~\(\R^{V}\to\R^{n}\) is surjective.
We denote by~\(\hat\Sigma\) the subfan of the positive orthant in~\(\R^{V}\)
that is combinatorially equivalent to~\(\Sigma\) under this projection.

The toric variety~\(\XXX_{\Sigma}\) associated to the fan~\(\Sigma\) is an orbifold,
and its Cox construction~\(\ZZZ_{\Sigma}=\XXX_{\hat\Sigma}\subset\C^{V}\) is smooth.
We have~\(\XXX_{\Sigma}=\ZZZ_{\Sigma}/\KKK\)
for a possibly disconnected algebraic subgroup~\(\KKK\subset\TTT\)
acting with finite isotropy groups, \cf~\cite[Thm.~5.4.5]{BuchstaberPanov:2015}.
In this setting we set \(\LLL=\TTT/\KKK\), and we let \(\KK=\KKK\cap\TT\)
be the compact form of~\(\KKK\), so that \(\LL=\TT/\KK\).
As before, we define \(\XX_{\Sigma}=\ZZ_{\Sigma}/\KK\).

\begin{proposition}
  The map~\(\XX_{\Sigma}\hookrightarrow\XXX_{\Sigma}\)
  induced by the inclusion~\(\ZZ_{\Sigma}\hookrightarrow\ZZZ_{\Sigma}\)
  is an \(\LL\)-equivariant strong deformation retract.
\end{proposition}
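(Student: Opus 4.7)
The plan is to factor the inclusion $\XX_\Sigma\hookrightarrow\XXX_\Sigma$ through the intermediate quotient $\ZZZ_\Sigma/\KK$, show that both factors are $\LL$-equivariant homotopy equivalences, and then upgrade the composite to a strong deformation retract via an equivariant cofibration argument.

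Concretely, I would consider the factorization
\begin{equation*}
  \XX_\Sigma = \ZZ_\Sigma/\KK \,\hookrightarrow\, \ZZZ_\Sigma/\KK \,\xrightarrow{\;\pi\;}\, \ZZZ_\Sigma/\KKK = \XXX_\Sigma.
\end{equation*}
The first arrow is obtained by quotienting the $\TT$-equivariant strong deformation retract $\ZZ_\Sigma\hookrightarrow\ZZZ_\Sigma$ (noted in the introduction) by the free action of $\KK\subset\TT$; it is automatically an $\LL=\TT/\KK$-equivariant strong deformation retract, because the original retraction commutes with $\KK$.

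For the second arrow $\pi$, I would exploit the polar decomposition $\TTT=\TT\cdot A$, with $A=(\R_{>0})^{V}$. Because $\KKK\subset\TTT$ is an algebraic subgroup defined over $\R$ (coming from the Cox construction), the decomposition restricts to $\KKK=\KK\cdot A_\KKK$ with $A_\KKK=\KKK\cap A$: writing $g=ta\in\KKK$ in polar form, one has $g\,\bar g=a^{2}\in A_\KKK$, and extracting the square root (possible because $A_\KKK$ is divisible) gives $a\in A_\KKK$, whence $t=ga^{-1}\in\KK$. The subgroup $A_\KKK$ is cut out of $A\cong\R^{V}$ by linear equations in logarithmic coordinates---namely those coming from the characters of $\TTT$ vanishing on $\KKK$---so $A_\KKK\cong\R^{m}$ for some $m$, and in particular is contractible. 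Since $\KKK$ acts on $\ZZZ_\Sigma$ with finite isotropy and $A_\KKK\cong\R^{m}$ contains no nontrivial finite subgroups, $A_\KKK$ acts freely on $\ZZZ_\Sigma/\KK$, making $\pi$ a principal $A_\KKK$-bundle with contractible fibre, and hence an $\LL$-equivariant homotopy equivalence.

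Combining the two factors, the inclusion $\XX_\Sigma\hookrightarrow\XXX_\Sigma$ is an $\LL$-equivariant homotopy equivalence; it is also a closed cofibration (both spaces being ANRs, in fact CW complexes). Consequently, the $\LL$-equivariant version of Dold's theorem on cofibrations---valid because $\LL$ is a compact Lie group acting smoothly---upgrades the inclusion to an $\LL$-equivariant strong deformation retract. The main technical obstacle is verifying the polar decomposition $\KKK=\KK\cdot A_\KKK$ together with $A_\KKK\cong\R^{m}$; once these are in hand, the rest of the argument is essentially formal.
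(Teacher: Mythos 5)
Your proposal is genuinely different from the paper's proof. The paper invokes the topological description of toric varieties over submonoids of~$\C$ from~\cite{Franz:2010}: the inclusion $\XXX_\Sigma(\DDDD)\hookrightarrow\XXX_\Sigma(\C)$ is already an $\LL$-equivariant SDR by~\cite[Thm.~2.1]{Franz:2010}, and the work reduces to identifying $\ZZ_\Sigma/\KK$ with the subspace $\XXX_\Sigma(\DDDD)\subset\XXX_\Sigma$, which the paper does cone-by-cone and in stages through a finite subgroup of~$\KK$. You instead exhibit the polar decomposition $\KKK=\KK\times A_\KKK$ with $A_\KKK\cong\R^{m}$ and deduce that $\ZZZ_\Sigma/\KK\to\XXX_\Sigma$ is a principal $A_\KKK$-bundle, then upgrade the composite homotopy equivalence to an SDR via an equivariant Dold theorem. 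Your route is pleasantly self-contained (no dependence on the structure theory of~\cite{Franz:2010}), while the paper's route is more economical because the cited theorem already delivers the deformation retraction directly.

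There are, however, gaps that your phrase ``the rest of the argument is essentially formal'' glosses over. First, the assertion that $\XX_\Sigma\hookrightarrow\XXX_\Sigma$ is an inclusion is itself nontrivial: in your factorization the first arrow is a subspace inclusion but the second, $\pi$, is a quotient, and you need that $\ZZ_\Sigma/\KK$ meets each $A_\KKK$-orbit in $\ZZZ_\Sigma/\KK$ at most once. Nothing in the principal-bundle picture forces a closed subset of the total space to inject into the base, so this requires an independent argument (it is precisely what the paper's identification $\ZZ_\Sigma/\KK=\XXX_\Sigma(\DDDD)$ accomplishes). Without it, Dold's theorem cannot be applied, since a strong deformation retract is in particular a subspace. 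Second, even granting injectivity, the $\LL$-equivariant Dold theorem needs the inclusion to be an $\LL$-cofibration and the spaces to be $\LL$-ANRs or $\LL$-CW complexes; ``both spaces being ANRs, in fact CW complexes'' is not the equivariant statement you need, and since $\XXX_\Sigma$ may only be an orbifold rather than a smooth manifold, this step deserves care. Third, for $\pi$ to be a principal $A_\KKK$-bundle you need the $A_\KKK$-action to be proper, and to produce an $\LL$-equivariant homotopy inverse you should average a section over the compact group~$\LL$; both are doable but not automatic. Finally, a minor slip: the first arrow does not need the $\KK$-action on~$\ZZ_\Sigma$ to be free (and it is not free in general, only with finite isotropy); the $\TT$-equivariant retraction simply descends to any quotient by a subgroup of~$\TT$.
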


In particular, \(\ZZ_{\Sigma}\hookrightarrow\ZZZ_{\Sigma}\)
is a \(\TT\)-equivariant strong deformation retract
\cite[Prop.~20]{Strickland:1999},~\cite[Thm.~4.7.5]{BuchstaberPanov:2015},
as mentioned already in the introduction.

\begin{proof}
  This is a consequence of the topological description
  of toric varieties given in~\cite{Franz:2010}.
  Recall that one can define a toric variety~\(\XX_{\Sigma}(k)\)
  over any submonoid~\(k\) of~\(\C\). We write \(\DDDD\subset\C\)
  for the unit disc, considered as a submonoid.
  Then \(\XXX_{\Sigma}=\XXX_{\Sigma}(\C)\) and
  \(\ZZ_{\Sigma}=\XXX_{\hat\Sigma}(\DDDD)\),
  see the remarks at the end of Section~4 in \cite{Franz:2010}.
  By~\cite[Thm.~2.1]{Franz:2010}, the inclusion~%
  \( 
    \XXX_{\Sigma}(\DDDD) \hookrightarrow \XXX_{\Sigma}(\C)
  \) 
  is an \(\LL\)-equivariant strong deformation retract.
  It therefore suffices to show that
  the canonical map~\(\ZZ_{\Sigma}=\XXX_{\hat\Sigma}(\DDDD)\to\XXX_{\Sigma}(\DDDD)\)
  is the quotient by~\(\KK\). We may assume that \(\Sigma\) has a single maximal cone~\(\sigma\).

  If \(\KK\) is finite, then it equals \(\KKK\), and the claim follows
  by naturality with respect to the inclusion~\(\DDDD\hookrightarrow\C\)
  since \(\XXX_{\sigma}=\XXX_{\hat\sigma}/\KKK\).
  If \(\KK\) is a freely acting subtorus of~\(\TT\), then
  \( 
    \ZZ_{\sigma} \cong \KK \times \XX_{\sigma}
  \), 
  and the claim is again true.
  In general, we may compute the quotient by~\(\KK\) in stages by first dividing out
  a finite subgroup~\(\Gamma\subset\KK\) and then a subtorus~\(\KK/\Gamma\subset\TT/\Gamma\)
  acting freely on~\(\ZZ_{\sigma}/\KK\).
  At each step, our claim holds, and so it does in general.
\end{proof}

As a consequence of this and
the topological variant of~\Cref{thm:HG-top-sing}\,\ref{thm:HG-top-sing-3},
we have natural isomorphisms 
\begin{equation}
  H^{*}(\XX_{\Sigma}) = H^{*}(\XXX_{\Sigma})
  \qquad\text{and}\qquad
  H^{*}_{\KK}(\ZZ_{\Sigma}) = H^{*}_{\KK}(\ZZZ_{\Sigma}) = H^{*}_{\KKK}(\ZZZ_{\Sigma}).
\end{equation}

If a torus acts on a topological space with finite isotropy groups whose orders
are invertible in~\(\kk\), then we call this action \newterm{\(\kk\)-free}
and the quotient~\(\XXX_{\Sigma}\) \newterm{\(\kk\)-smooth}.
The action of~\(\KKK\) on~\(\ZZZ_{\Sigma}\) is \(\kk\)-free
if and only if the \(\KK\)-action on~\(\ZZ_{\Sigma}\) is so.
This happens if and only if the fan~\(\Sigma\) is \(\kk\)-regular,
that is, if and only if the minimal integral representatives of the rays of any cone~\(\sigma\in\Sigma\)
can be completed to a basis for~\(H_{1}(\LLL)\cong\kk^{n}\).
It is also equivalent to~\(\XXX_{\Sigma}\) being a \(\kk\)-homology manifold.

\begin{lemma}
  \label{thm:free-quotient}
  If \(\KK\) acts \(\kk\)-freely on~\(\ZZ_{\Sigma}\),
  then there is an isomorphism of graded algebras
  \begin{equation*}
    H_{\KK}^{*}(\ZZ_{\Sigma}) = H^{*}(\XX_{\Sigma}),
  \end{equation*}
  natural with respect to vertex-preserving morphisms.
\end{lemma}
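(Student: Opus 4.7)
The plan is to study the canonical projection
\[
    \pi\colon (\ZZ_\Sigma)_\KK = E\KK\times_\KK\ZZ_\Sigma\;\longrightarrow\;\ZZ_\Sigma/\KK=\XX_\Sigma,
    \qquad [e,z]\mapsto[z],
\]
and show that it induces a ring isomorphism on $\kk$-cohomology. Its fibre over $[z]$ is $E\KK/\KK_z$, a model for the classifying space $B\KK_z$ of the isotropy group. By the $\kk$-freeness hypothesis every $\KK_z$ is finite of order invertible in $\kk$, so the classical averaging/transfer argument for finite groups yields $H^*(B\KK_z;\kk)=\kk$ concentrated in degree~$0$.

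To promote this fibrewise triviality to a global isomorphism, I would apply Vietoris--Begle (equivalently, the Leray spectral sequence of $\pi$). Since $\ZZ_\Sigma$ is a compact Hausdorff space and $\KK$ is a compact Lie group, the slice theorem provides local models: around each orbit $[z]\in\XX_\Sigma$ there is a neighbourhood $U$ such that $\pi^{-1}(U)$ deformation retracts onto $U\times B\KK_z$. This identifies the stalks of $R^q\pi_*\underline{\kk}$ with $H^q(B\KK_z;\kk)$, which vanish for $q>0$ and equal $\kk$ for $q=0$. The Leray spectral sequence
\[
    E_2^{p,q} = H^p\bigl(\XX_\Sigma;\, R^q\pi_*\underline{\kk}\bigr)\Longrightarrow H^{p+q}_\KK(\ZZ_\Sigma;\kk)
\]
therefore collapses onto the bottom row and, since it is multiplicative, yields the claimed isomorphism of graded $\kk$-algebras. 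Naturality with respect to a vertex-preserving morphism $\Sigma\to\Sigma'$ is automatic: any such morphism induces a $\TT$-equivariant (hence $\KK$-equivariant) map $\ZZ_\Sigma\to\ZZ_{\Sigma'}$, which descends to orbit spaces and lifts to Borel constructions, compatibly with both copies of $\pi$.

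The main obstacle is the Leray step: because the fibres $B\KK_z$ of $\pi$ depend on $z$, the map $\pi$ is not a genuine Serre fibration, and one must invoke the slice theorem for compact Lie group actions to produce the local product structure needed to identify the stalks of $R^*\pi_*\underline{\kk}$ with fibre cohomology. Once this is in place the spectral-sequence collapse and the multiplicative bookkeeping are routine.
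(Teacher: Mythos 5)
Your approach is correct in substance but genuinely different from the paper's, and it is worth noting the contrast.

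You analyse the single projection $\pi\colon (\ZZ_\Sigma)_\KK\to\XX_\Sigma$ via the Leray spectral sequence (equivalently Vietoris--Begle), using the slice theorem to control the stalks of $R^q\pi_*\underline\kk$. This works: since $\kk$-freeness makes every isotropy group $\KK_z$ finite of invertible order, $H^{>0}(B\KK_z;\kk)=0$, the higher direct images vanish and the multiplicative spectral sequence collapses to the edge. The paper instead factors the problem into two elementary steps. It takes $\Gamma\subset\KK$ to be the (finite) subgroup generated by \emph{all} isotropy groups; being inside the connected torus $\TT$, $\Gamma$ acts trivially on $H^*(\ZZ_\Sigma)$, so the classical transfer argument (Bredon III.7.2) gives $H^*(\ZZ_\Sigma/\Gamma)\cong H^*(\ZZ_\Sigma)$ and likewise for $B\KK\to B(\KK/\Gamma)$. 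Then $\KK/\Gamma$ acts \emph{freely} on $\ZZ_\Sigma/\Gamma$, so the Borel construction collapses to the orbit space on the nose. This avoids both the slice theorem and any sheaf-theoretic machinery, and it makes naturality immediate. Your route pays for its generality with heavier input and one misstated local model: $\pi^{-1}(U)$ does not deformation retract onto a product $U\times B\KK_z$ (the isotropy varies over $U$); what one actually gets from an equivariantly contractible slice $S$ is $\pi^{-1}(U)=E\KK\times_{\KK_z}S\simeq B\KK_z$, which is exactly what you need for the stalk computation, so the gap is cosmetic rather than fatal. I would still recommend the paper's two-step quotient as the cleaner argument, not least because it also handles naturality without having to track sheaf pullbacks.
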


This is well-known. We give a short proof for the convenience of the reader.

\begin{proof}
  Let \(\Gamma\subset \KK\) be the subgroup generated by all isotropy groups of~\(\KK\)
  in~\(\ZZ=\ZZ_{\Sigma}\). It is finite and its order is in invertible in~\(\kk\).

  Given that \(\Gamma\) acts trivially on~\(H^{*}(\ZZ)\),
  the quotient~\(\ZZ\to \ZZ/\Gamma\) induces an isomorphism in cohomology,
  \cf~\cite[Thm.~III.7.2]{Bredon:1972}, as does the projection~\(B\KK\to B(\KK/\Gamma)\).
  As a consequence, we get an isomorphism in equivariant cohomology
  \begin{equation}
    H_{\KK/\Gamma}^{*}(\ZZ/\Gamma)\to H_{\KK}^{*}(\ZZ).
  \end{equation}

  By construction, the group~\(\KK/\Gamma\) acts freely on~\(\ZZ/\Gamma\),
  so that the canonical map
  \begin{equation}
    H^{*}(\ZZ/\KK)=H^{*}\bigl(\,(\ZZ/\Gamma)\bigm/(\KK/\Gamma)\,\bigr)
    \to H_{\KK/\Gamma}^{*}(\ZZ/\Gamma)
  \end{equation}
  is an isomorphism.
\end{proof}

By \Cref{thm:simplicial-torus-inclusion}, the projection~\(\TT\to\LL\)
corresponds to a map~\(T\to L\) between simplicial tori; let \(K\) be its kernel.
The induced map~\(K\to\KK\) is a homotopy equivalence
because again by \Cref{thm:simplicial-torus-inclusion}
it is so on the connected component of the identity.
Hence we conclude from \Cref{thm:HG-top-sing}\,\ref{thm:HG-top-sing-2} that
we have an isomorphism
\begin{equation}
  \label{eq:iso-HK-HKK}
  H_{K}^{*}(\ZZ_{\Sigma}) = H_{\KK}^{*}(\ZZ_{\Sigma})
\end{equation}
of algebras over~\(H^{*}(BK)=H^{*}(B\KK)\),
natural with respect to vertex-preserving morphisms of simplicial posets.

\section{Davis\texorpdfstring{--}{-}Januszkiewicz spaces and face rings}
\label{sec:DJ-facerings}

We continue to write \(\Sigma\) for a simplicial poset on the vertex set~\(V\).
Let \(\SimpSOne=B\Z\) be the simplicial circle
and \(T=(\SimpSOne)^{V}\) a simplicial torus.
For any~\(\sigma\in\Sigma\), we write~\(T^{\sigma}\subset T\) for the simplicial subtorus
corresponding to the vertices in~\(\sigma\). Let
\begin{equation}
  \SimpDJ_{\Sigma} = \colim_{\sigma\in\Sigma}\SimpDJ_{\sigma}
\end{equation}
be the simplicial Davis--Januszkiewicz space, where
\begin{equation}
  \SimpDJ_{\sigma} = BT^{\sigma}\times b_{0}^{V\setminus\sigma}\subset BT.
\end{equation}
The construction is natural with respect to vertex-preserving morphisms of simplicial posets.
In particular, the folding map of~\(\Sigma\) induces a map~\(\SimpDJ_{\Sigma}\to BT\).
Note that \(\SimpDJ_{\Sigma}\) is \(1\)-reduced since every~\(\SimpDJ_{\sigma}\) is so.

\begin{lemma}
  \label{thm:Borel-ZK-DJ}
  The simplicial Borel construction~\((\ZZ_{\Sigma})_{T}\) and the
  simplicial Davis--Januszkiewicz space~\(\SimpDJ_{\Sigma}\) are homotopy-equivalent,
  naturally with respect to vertex-preserving morphisms of simplicial posets.
  Moreover, the homotopy equivalences are compatible with the canonical maps to~\(BT\).
\end{lemma}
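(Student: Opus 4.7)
The plan is to reduce to the standard topological Davis--Januszkiewicz decomposition and then transfer the resulting equivalence into the simplicial category using \Cref{thm:HG-top-sing} and \Cref{thm:simplicial-torus-inclusion}. Let $\TopDJ_{\sigma}=B\TT^{\sigma}\subset B\TT$ and $\TopDJ_{\Sigma}=\colim_{\sigma\in\Sigma}\TopDJ_{\sigma}$ be the topological Davis--Januszkiewicz pieces and space.

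For each $\sigma\in\Sigma$, the projection onto the second factor
\[
  \ZZ_{\sigma}=(\DDDD^{2})^{\sigma}\times(\SSSS^{1})^{V\setminus\sigma}\longrightarrow(\SSSS^{1})^{V\setminus\sigma}
\]
is a $\TT$-equivariant strong deformation retraction, obtained by retracting each disc equivariantly onto its centre. Writing $\TT=\TT^{\sigma}\times\TT^{V\setminus\sigma}$, the action on $(\SSSS^{1})^{V\setminus\sigma}=\TT^{V\setminus\sigma}$ factors through projection to the second factor and is free there, so
\[
  (\ZZ_{\sigma})_{\TT}\;\simeq\;E\TT\times_{\TT}\TT^{V\setminus\sigma}\;\simeq\;B\TT^{\sigma}\;=\;\TopDJ_{\sigma}.
\]
These equivalences are manifestly natural in $\sigma$ (and in vertex-preserving morphisms $\Sigma\to\Sigma'$), and both $\ZZ_{\Sigma}$ and $\TopDJ_{\Sigma}$ are colimits along closed cofibrations. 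They therefore pass to the colimit, giving a natural homotopy equivalence $(\ZZ_{\Sigma})_{\TT}\simeq\TopDJ_{\Sigma}$ which is compatible with the projections to $B\TT$, since every intermediate equivalence is a map over $B\TT$ lifting the inclusion $\TT^{\sigma}\hookrightarrow\TT$.

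To translate this into the simplicial world, I would apply \Cref{thm:HG-top-sing}\,(ii) to identify $(\ZZ_{\Sigma})_{\TT}$ with $(\Simp(\ZZ_{\Sigma}))_{\Simp(\TT)}$ up to homotopy, and then use \Cref{thm:simplicial-torus-inclusion} together with \Cref{thm:HG-top-sing}\,(iii) to replace $\Simp(\TT)$ by the simplicial torus $T$, yielding the Borel construction $(\ZZ_{\Sigma})_{T}$ as used in the statement. On the Davis--Januszkiewicz side, the weak equivalence $T^{\sigma}\to\Simp(\TT^{\sigma})$ from \Cref{thm:simplicial-torus-inclusion} induces weak equivalences $BT^{\sigma}\to B\Simp(\TT^{\sigma})\simeq\Simp(B\TT^{\sigma})$, which assemble after taking colimits into a natural equivalence $\SimpDJ_{\Sigma}\simeq\Simp(\TopDJ_{\Sigma})$. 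Composing, and noting that each equivalence in the chain is a map over $BT$ (respectively $B\TT$), gives the desired natural homotopy equivalence $(\ZZ_{\Sigma})_{T}\simeq\SimpDJ_{\Sigma}$ over $BT$.

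The main technical obstacle is coherence: one must ensure that the simplex-wise equivalences and the various topological-to-simplicial comparisons can be chosen to commute, at least up to homotopy, with all vertex-preserving morphisms and simultaneously with the projections to $BT$. This goes through because every construction in sight is strictly functorial in $\sigma$, all inclusions involved are closed cofibrations, and both the simplicial classifying space functor $B$ and the singular complex functor $\Simp$ preserve such colimits up to homotopy equivalence; no delicate rigidification of the equivalences is required.
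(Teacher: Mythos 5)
Your route --- compute the topological Borel construction cone by cone and then transfer to simplicial sets --- is genuinely different from the paper's. The paper stays inside the simplicial polyhedral product throughout: it writes
\[
  (\ZZ_{\Sigma})_{T}=\PolyProd{\Sigma}{(\DD^{2})_{\SimpSOne}}{(\SS^{1})_{\SimpSOne}},
  \qquad
  \SimpDJ_{\Sigma}=\PolyProd{\Sigma}{B\SimpSOne}{b_{0}},
\]
and produces a zigzag through the intermediate polyhedral product $\PolyProd{\Sigma}{(\DD^{2})_{\SimpSOne}}{e_{0}}$, whose two arms are \emph{actual} simplicial maps induced by the factor-wise equivalences $e_{0}\hookrightarrow(\SS^{1})_{\SimpSOne}$ and $(\DD^{2})_{\SimpSOne}\to B\SimpSOne$. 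The simplicial analogue of \cite[Prop.~4.2.3]{BuchstaberPanov:2015} then promotes these to equivalences of the colimits, and the compatibility with $BT$ is an immediate consequence of naturality under the folding map. Your approach reproves the topological Davis--Januszkiewicz decomposition from scratch before comparing categories, which is more work and, as it turns out, harder to glue.

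There is a genuine gap in your final step, precisely where you assert that ``no delicate rigidification of the equivalences is required.'' Two issues. First, your cone-wise identifications are in fact a zigzag: the natural maps are $(\ZZ_{\sigma})_{\TT}\to E\TT\times_{\TT}\TT^{V\setminus\sigma}$ (collapse the disc factors) and $B\TT^{\sigma}=E\TT^{\sigma}/\TT^{\sigma}\to E\TT/\TT^{\sigma}=E\TT\times_{\TT}\TT^{V\setminus\sigma}$, which point in opposite directions, so you do not have a single direct equivalence natural in $\sigma$; you should say so. Second, and more seriously, the comparison maps supplied by \Cref{thm:HG-top-sing} are only asserted to have \emph{homotopy classes} natural in their arguments --- they are not strict natural transformations of diagrams. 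Pushing such ``up to homotopy'' equivalences through $\colim_{\sigma}$ is exactly the kind of step that needs either a strictly natural zigzag of honest maps (which is what the paper's polyhedral-product formulation provides automatically) or the full machinery of homotopy colimits together with an explicit homotopy-coherent naturality argument. As written, the passage from the cone-wise topological/simplicial comparison to the global statement for $\Sigma$, together with the simultaneous compatibility over $BT$, is unjustified; I would either adopt the paper's intrinsic simplicial polyhedral-product argument or carry out the homotopy-colimit coherence carefully.
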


\begin{proof}
  The polyhedral product functor informally introduced in~\eqref{eq:intro-def-ma}
  can be extended to simplicial sets,
  \cf~\cite[Secs.~4.2~\&~8.1]{BuchstaberPanov:2015}. We have
  \begin{equation}
    \ZZ_{\Sigma} = \PolyProd{\Sigma}{\DD^{2}}{\SS^{1}}
    \qquad\text{and}\qquad
    \SimpDJ_{\Sigma} = \PolyProd{\Sigma}{B\SimpSOne}{b_{0}}.
  \end{equation}

  Since \(\DD^{2}\) is \(\SS^{1}\)-equivariantly contractible, the projection
  \begin{equation}
    (\DD^{2})_{\SimpSOne} \to B\SimpSOne
  \end{equation}
  is a homotopy equivalence,
  as is by \Cref{thm:HG-top-sing}\,\ref{thm:HG-top-sing-3} the composition
  \begin{equation}
    e_{0} \hookrightarrow E\SimpSOne = (\SimpSOne)_{\SimpSOne} \hookrightarrow (\SS^{1})_{\SimpSOne}.
  \end{equation}
  The simplicial analogue of~\cite[Prop.~4.2.3]{BuchstaberPanov:2015}
  now shows that both arrows in the zigzag
  \begin{equation}
    (\ZZ_{\Sigma})_{T}
    = \bigPolyProd{\Sigma}{(\DD^{2})_{\SimpSOne}}{(\SS^{1})_{\SimpSOne}}
    \leftarrow \bigPolyProd{\Sigma}{(\DD^{2})_{\SimpSOne}}{e_{0}}
    \to \PolyProd{\Sigma}{BS^1}{b_{0}} = \SimpDJ_{\Sigma}
  \end{equation}
  are homotopy equivalences.
  The naturality of the zigzag follows from that of the polyhedral product functor.

  For the compatibility with the maps to~\(BT\) we use the naturality with respect to the folding map
  and the commutative diagram
  \begin{equation}
    \begin{tikzcd}
      \bigl((\DD^{2})^{V}\bigr)_{T} \arrow{d} & \bigl((\DD^{2})^{V}\bigr)_{T} \arrow[equal]{l} \arrow{r} \arrow{d} & BT \arrow[equal]{d} \\
      BT & BT \arrow[equal]{l} \arrow[equal]{r} & BT \mathrlap{.}
    \end{tikzcd}
    \qedhere
  \end{equation}
\end{proof}

We can now state our first dga model for~\((\ZZ_{\Sigma})_{\KK}\),
using the one-sided bar construction defined in \Cref{sec:bar}.

\begin{proposition}
  \label{thm:quiso-bar-DJ}
  The dgas~\(C^{*}((\ZZ_{\Sigma})_{\KK})\)
  and~\(\Bar{C^{*}(\SimpDJ_{\Sigma})}{C^{*}(BL)}\)
  are quasi-iso\-mor\-phic, naturally with respect to vertex-preserving morphisms.
\end{proposition}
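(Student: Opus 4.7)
My plan is to factor the desired quasi-isomorphism through a purely simplicial Borel construction, using the ingredients assembled in the preceding sections.

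First, I would pass from topological to simplicial Borel constructions. Applying \Cref{thm:HG-top-sing}\,\ref{thm:HG-top-sing-2} with $\GG=\KK$ and $\XX=\ZZ_\Sigma$, together with the homotopy equivalence $K\to\Simp(\KK)$ obtained from \Cref{thm:simplicial-torus-inclusion} (viewed as an equivalence on identity components in the spirit of the argument leading to~\eqref{eq:iso-HK-HKK}) and \Cref{thm:HG-top-sing}\,\ref{thm:HG-top-sing-3}, produces a natural zigzag of homotopy equivalences and, after applying $C^*$, of dga quasi-isomorphisms
\[
  C^*((\ZZ_\Sigma)_{\KK})\simeq C^*(\Simp(\ZZ_\Sigma)_K),
\]
where $K$ is the simplicial kernel of $T\to L$ and the $T$-action on $\Simp(\ZZ_\Sigma)$ is induced via \Cref{thm:simplicial-torus-inclusion}.

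Second, I would invoke \Cref{thm:dga-bar} applied to $G=T$, this simplicial kernel $K$, and $Z=\Simp(\ZZ_\Sigma)$. This yields a natural quasi-isomorphism of dgas
\[
  \Bar{C^*(\Simp(\ZZ_\Sigma)_T)}{C^*(BL)} \xrightarrow{\ \simeq\ } C^*(\Simp(\ZZ_\Sigma)_K).
\]
A technical point: $L$ is a simplicial torus and therefore only reduced, not $1$-reduced as the hypothesis of \Cref{thm:dga-bar} formally demands; however, that hypothesis is used only via the Eilenberg--Moore spectral sequence for a fibration with $1$-connected base, and $BL=K(N,2)$ is in fact $1$-reduced, so the proof of \Cref{thm:dga-bar} transports unchanged.

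Third, I would replace $\Simp(\ZZ_\Sigma)_T$ by $\SimpDJ_\Sigma$ using the natural zigzag of homotopy equivalences provided by \Cref{thm:Borel-ZK-DJ}, which is compatible with the canonical maps to~$BT$ and hence, via $BT\to BL$, with the maps to $BL$. Each edge of this zigzag becomes a morphism of dgas under $C^*(BL)$ after applying cochains, and \Cref{thm:Psi-quiso}---used with $A=C^*(BL)$, which is $1$-connected since $BL$ is $1$-reduced and torsion-free because normalized cochains form a complex of free $\kk$-modules---upgrades each edge to a quasi-isomorphism of one-sided bar constructions. Composing the three natural zigzags yields the desired natural quasi-iso\-mor\-phism. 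I expect the main obstacle to lie in this third step: namely, confirming that every map in the zigzag of \Cref{thm:Borel-ZK-DJ} respects the $C^*(BL)$-module structure (not merely the $C^*(BT)$-module structure), so that \Cref{thm:Psi-quiso} genuinely applies edgewise.
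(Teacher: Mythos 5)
Your proposal is correct and follows essentially the same route as the paper: pass to the simplicial Borel construction via \Cref{thm:simplicial-torus-inclusion} and \Cref{thm:HG-top-sing} (this is the content of~\eqref{eq:iso-HK-HKK}), apply \Cref{thm:dga-bar} with $G=T$, then replace $(\ZZ_\Sigma)_T$ by $\SimpDJ_\Sigma$ using \Cref{thm:Borel-ZK-DJ} and upgrade each edge with \Cref{thm:Psi-quiso}. Your two side concerns are both apt and both resolved exactly as you suggest: the hypothesis of \Cref{thm:dga-bar} as stated does ask for $L$ to be $1$-reduced while the simplicial torus $L$ is merely reduced, but all the proof ever uses is that $BL$ is $1$-reduced (so that $C^{*}(BL)$ is $1$-connected and the Eilenberg--Moore argument applies), which holds; and the zigzag of \Cref{thm:Borel-ZK-DJ} respects the $C^{*}(BL)$-algebra structure because the lemma explicitly asserts compatibility with the canonical maps to $BT$, and the $C^{*}(BL)$-structure is pulled back along the fixed map $BT\to BL$.
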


\begin{proof}
  We know from~\eqref{eq:iso-HK-HKK}
  that \((\ZZ_{\Sigma})_{\KK}\) is quasi-iso\-mor\-phic to~\((\ZZ_{\Sigma})_{K}\).
  By \Cref{thm:dga-bar}, the dgas~\(C^{*}((\ZZ_{\Sigma})_{K})\)
  and~\(\Bar{C^{*}((\ZZ_{\Sigma})_{T})}{C^{*}(BL)}\)
  are quasi-iso\-mor\-phic.
  Using \Cref{thm:Borel-ZK-DJ},
  we can pass from~\((\ZZ_{\Sigma})_{T}\)
  to~\(\SimpDJ_{\Sigma}\).
  By \Cref{thm:Psi-quiso},
  we get a quasi-iso\-mor\-phism at each step.
\end{proof}

We refer to~\cite[Def.~3.5.2]{BuchstaberPanov:2015} for the definition
of the face ring~\(\kk[\Sigma]\) of the simplicial poset~\(\Sigma\).
This construction generalizes the Stanley-Reisner ring of a simplicial complex.
Its functoriality with respect to vertex-preserving morphisms
induces a morphism of algebras~\(\kk[V]\to\kk[\Sigma]\).
We use an even grading of face rings, so that
the polynomial ring~\(\kk[V]=\kk[\,t_{v}\,|\,v\in V\,]\) is generated in degree~\(2\).

The cohomology algebra~\(H^{*}(DJ_{\Sigma})\) is isomorphic to~\(\kk[\Sigma]\),
see \cite[Prop.~4.3.1, Ex.~4.10.8]{BuchstaberPanov:2015} for the topological case
and \citegersten{Sec.~\refgersten{sec:dj}} for the simplicial setting.
This isomorphism is natural with respect to vertex-preserving morphisms.
In particular, the map~\(H^{*}(BT)\to H^{*}(DJ_{\Sigma})\)
corresponds to the morphism~\(\kk[V]\to\kk[\Sigma]\).

The following
result from~\cite{Franz:2018a} is crucial for us, see~\citegersten{Thm.~\refgersten{thm:formality-DJ-simp}}.
Recall that \(\kk[V]\) and~\(\kk[\Sigma]\) are canonically hgas
with trivial differentials and trivial operations~\(\EE_{k}\) for~\(k\ge1\).

\begin{theorem}
  \label{thm:formality-DJ-short}
  There is a quasi-iso\-mor\-phisms of hgas~\(f_{\Sigma}\colon C^{*}(DJ_{\Sigma})\to\kk[\Sigma]\),
  natural with respect to vertex-preserving morphisms of simplicial posets.
\end{theorem}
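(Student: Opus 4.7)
The plan is to first construct an hga quasi-isomorphism $f_T \colon C^*(BT) \to \kk[V] = H^*(BT)$ for a single simplicial torus $T = (\SimpSOne)^V$, and then extend to arbitrary $\Sigma$ via the colimit structure $DJ_\Sigma = \colim_{\sigma \in \Sigma} DJ_\sigma$.

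For the single-torus case, the degree-$2$ generators $t_v \in \kk[V]$ have canonical cocycle representatives $\hatalpha_v \in Z^2(BT)$ coming from the coordinate projections $N \to \Z$, as discussed around~\eqref{eq:iso-HomNkk-Z1-Z2}. The first step is to represent each monomial $t_{v_1}^{k_1} \cdots t_{v_r}^{k_r}$ by a specific standard cochain in $C^*(BT)$, built recursively from the $\hatalpha_v$ and appropriate correction terms. Lemma~\ref{thm:d-q-ij} already supplies the foundation in low degrees, exhibiting explicit cochains whose coboundaries equal the $\cupone$-products of generators. Having fixed such standard representatives, one defines $f_T$ as the dga map sending each standard representative to the corresponding monomial and vanishing on a chosen acyclic complement.

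The main obstacle is verifying that $f_T$ preserves all hga operations $\EE_k$ for $k \geq 1$. Since these operations vanish on~$\kk[V]$, one needs the output of $\EE_k$ on standard representatives to lie in the acyclic complement. The $k = 1$ case is the second part of Lemma~\ref{thm:d-q-ij}, as $[\hatalpha_1] \circ [\hatalpha_1]$ and $[\hatalpha_1] \circ [\hatalpha_2] + [\hatalpha_2] \circ [\hatalpha_1]$ are coboundaries in $\BB C^*(T)$. For $k \geq 2$, analogous identities are needed; observations like Remark~\ref{rem:cup-two}, which identifies $\hatalpha_{12}$ with $\hatalpha_1 \cuptwo \hatalpha_2$, suggest these should follow by exploiting the abelian group structure of~$T$ and the symmetry of its Eilenberg--Mac\,Lane cochain complex. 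Proving the required identities systematically, likely via an inductive construction of higher-degree correction cochains combined with an acyclic-models style argument, is the most delicate step.

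For the general case, naturality takes over: each inclusion $\sigma \subseteq V$ yields a compatible map $C^*(BT^\sigma) \to \kk[\sigma]$, and these assemble through the colimit description of $DJ_\Sigma$ (dually, the quotient description of $\kk[\Sigma]$) into the desired $f_\Sigma$. Naturality with respect to vertex-preserving morphisms is built into the construction, and that $f_\Sigma$ remains a quasi-isomorphism follows from a Mayer--Vietoris argument applied to the colimit decomposition of $DJ_\Sigma$, combined with the known (non-hga) formality of face rings.
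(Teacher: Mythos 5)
The paper does not actually prove this theorem; it is imported verbatim from the companion paper Franz:2018a (``Homotopy Gerstenhaber formality of Davis--Januszkiewicz spaces''), as the sentence preceding it indicates. So there is no internal proof to compare your proposal against, but your sketch can still be assessed on its own terms, and I believe it has a genuine gap in the torus case.

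The critical step is constructing $f_{T}\colon C^{*}(BT)\to\kk[V]$ as a dga map, and the plan to ``define $f_{T}$ as the dga map sending each standard representative to the corresponding monomial and vanishing on a chosen acyclic complement'' presupposes exactly what needs to be shown. For such a prescription to define a dga morphism, the span $A_{0}$ of the chosen standard representatives would have to be a subalgebra of $C^{*}(BT)$ and the acyclic complement $C$ a two-sided dg-ideal, so that $C^{*}(BT)=A_{0}\oplus C$ is a multiplicative splitting. But the cup product is not commutative on cochains, so, e.g., $\hatalpha_{v}\cup\hatalpha_{w}$ and $\hatalpha_{w}\cup\hatalpha_{v}$ are distinct cocycles, both representing $t_{v}t_{w}$, and neither need agree with whichever ``standard'' representative you fixed for that monomial; there is no a priori reason their difference lands in $C$, much less that all of $A_{0}\cdot C+C\cdot A_{0}+C\cdot C$ does. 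Producing such a splitting \emph{is} the formality problem, so the argument is circular as stated. The additional hga compatibility for $k\ge2$ you flag as ``the most delicate step'' is not a loose end one can defer: it is the actual content of the cited companion paper, and an ``acyclic-models style argument'' will not work because it would only give compatibility up to higher homotopies, whereas the theorem requires a strict hga morphism. What the companion paper actually does, as one can already read off from the proof of \Cref{thm:value-fT} in this paper, is define $f_{T}$ by an explicit chain-level formula coming from the simplicial universal bundle $\pi\colon ET\to BT$ and a contracting homotopy $S$ of $ET$ (the coefficient of $t_{v}$ in $f_{T}(\beta)$ is $\beta$ evaluated on $b_{v}=\pi_{*}S(c_{v}\cdot e_{0})$), exploiting the fact that $T=BN$ is a simplicial abelian group; compatibility with the interval-cut hga structure is then verified against this explicit formula rather than via a chosen splitting. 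Your final paragraph, extending $f_{T}$ to $f_{\Sigma}$ via naturality and the (co)limit descriptions of $DJ_{\Sigma}$ and $\kk[\Sigma]$, is fine once $f_{T}$ is in hand, and is indeed how the general case goes.
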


The naturality of~\(f_{\Sigma}\) with respect to the folding map~\(DJ_{\Sigma}\to BT\)
implies that the diagram
\begin{equation}
  \begin{tikzcd}
    C^{*}(DJ_{\Sigma}) \arrow{d}[left]{f_{\Sigma}} & C^{*}(BT) \arrow{d}{f_{T}} \arrow{l} \\
    \kk[\Sigma] & \kkV  \arrow{l}
  \end{tikzcd}
\end{equation}
commutes.

Let \((y_{v})\) be the canonical basis for \(H_{1}(T;\Z)=\Z^{V}\)
and let \((x_{v})\) be their images in~\(H_{1}(L;\Z)\).
The maps~\(f_{T}\) and~\(f_{\Sigma}\) both depend on the choice
of representatives~\( c_{v}\in C_{1}(BT)\) for the basis elements~\(y_{v}\),
see the construction in~\citegersten{Sec.~\refgersten{sec:formality-BT}}.
Any choice of representative~\( c\in C_{1}(\SimpSOne;\Z)\) for~\(1\in H_{1}(\SimpSOne;\Z)=\Z\)
canonically leads to representatives by embedding \( c\) into the various circle factors of~\(T\).
In what follows, we use the tautological representative~\( c=[1]\in C_{1}(\SimpSOne;\Z)\)
described in \Cref{sec:tori} unless stated otherwise.

Combining \Cref{thm:quiso-bar-DJ} with \Cref{thm:formality-DJ-short}
and \Cref{thm:Psi-quiso},
we obtain our second dga model for~\((\ZZ_{\Sigma})_{K}\).

\begin{proposition}
  \label{thm:HKZZSigma-Tor}
  The map~\(f_{\Sigma}\) induces a quasi-iso\-mor\-phism of dgas
  \begin{equation}
    \PsiSigma \colon \Bar{C^{*}(DJ_{\Sigma})}{C^{*}(BL)}
    \to \BB_{f_{\Sigma}}(\kk,C^{*}(BL),\kk[\Sigma]),
  \end{equation}
  natural with respect to vertex-preserving morphisms of simplicial posets.
  In particular, the dga~\(\BB_{f_{\Sigma}}(\kk,C^{*}(BL),\kk[\Sigma])\) is quasi-iso\-mor\-phic to~\(C^{*}((\ZZ_{\Sigma})_{K})\).
\end{proposition}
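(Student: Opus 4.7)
The plan is to assemble the result from three ingredients already established. The map $\PsiSigma$ is an instance of the general construction $\Psi_{f}$ from Section~\ref{sec:bar} applied with $A = C^{*}(BL)$, $A' = C^{*}(DJ_{\Sigma})$, $A'' = \kk[\Sigma]$, and $f = f_{\Sigma}$, where the hga morphism $A \to A'$ is induced by the composition $DJ_{\Sigma} \to BT \to BL$. To conclude that it is a quasi-isomorphism of chain complexes, I need only verify the hypotheses of Lemma~\ref{thm:Psi-quiso}: the simplicial torus $L$ is reduced, so $BL$ is $1$-reduced and $C^{*}(BL)$ is $1$-connected; it is torsion-free over the PID $\kk$ as the $\kk$-linear dual of the free module $C_{*}(BL)$; and $C^{*}(DJ_{\Sigma})$ is bounded below as a non-negatively graded cochain complex. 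That $f_{\Sigma}$ is a quasi-isomorphism of dgas is provided by Theorem~\ref{thm:formality-DJ-short}.

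The delicate step is to check that $\PsiSigma$ respects the dga structures, rather than being merely a chain map. Both source and target carry their multiplication from Proposition~\ref{thm:def-prod-bar}, whose explicit product formula is built out of the ordinary products and the brace operations $\EE_i$ on the target hgas ($A'$ or $A''$), with bar entries pushed forward from the base $A$. Because $f_{\Sigma}$ is not merely a quasi-isomorphism of dgas but in fact a morphism of \emph{homotopy Gerstenhaber} algebras, it intertwines the operations $\EE_i$ of $C^{*}(DJ_{\Sigma})$ and $\kk[\Sigma]$, so applying $f_{\Sigma}$ entrywise commutes with the twisted product term by term. The need for hga compatibility---and not merely dga compatibility---is the main conceptual obstacle, and it is precisely here that the strengthened formality of Theorem~\ref{thm:formality-DJ-short} is indispensable.

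Naturality with respect to vertex-preserving morphisms of simplicial posets then follows from the naturality clauses of Theorem~\ref{thm:formality-DJ-short} and the evident functoriality of the one-sided bar construction. Finally, for the concluding statement, compose $\PsiSigma$ with the natural chain of quasi-isomorphisms between $C^{*}((\ZZ_{\Sigma})_{K})$ and $\Bar{C^{*}(DJ_{\Sigma})}{C^{*}(BL)}$ supplied by Proposition~\ref{thm:quiso-bar-DJ}.
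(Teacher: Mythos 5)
Your proposal is correct and follows the same route as the paper: Lemma~\ref{thm:Psi-quiso} for the quasi-isomorphism (with the same hypothesis checks), Theorem~\ref{thm:formality-DJ-short} for the hga formality of $f_\Sigma$, and the observation that $\Psi_\Sigma$ is multiplicative precisely because $f_\Sigma$ intertwines the brace operations $\EE_i$ appearing in the product formula of Proposition~\ref{thm:def-prod-bar} (the paper phrases this as the $\EE_k$-terms with $k\ge 1$ being annihilated, since $\kk[\Sigma]$ has trivial hga structure, but this is the same point). The final appeal to Proposition~\ref{thm:quiso-bar-DJ} is exactly how the paper derives the last sentence.
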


Note that the product on~\(\BB_{f_{\Sigma}}(\kk,C^{*}(BL),\kk[\Sigma])\) is the componentwise product
\begin{equation}
  [a_{1}|\dots|a_{k}]f \circ [b_{1}|\dots|b_{l}]g =
  \bigl([a_{1}|\dots|a_{k}]\circ [b_{1}|\dots|b_{l}]\bigr)fg
\end{equation}
where \([a_{1}|\dots|a_{k}]\),~\([b_{1}|\dots|b_{l}]\in\BB\,C^{*}(BL)\) and \(f\),~\(g\in\kk[\Sigma]\).
The map~\(\PsiSigma\) is multiplicative
because the terms involving the operations~\(\EE_{k}\) with~\(k\ge1\)
in the formula given in \Cref{thm:def-prod-bar} are annihilated by the hga map~\(C^{*}(DJ_{\Sigma})\to\kk[\Sigma]\).

In Sections~\ref{sec:twisted-prod} and~\ref{sec:2-inv}
we will need to know how to evaluate the quasi-iso\-mor\-phism \(f_{T}\)
on \(2\)-cochains coming from~\(BL\).

\begin{lemma}
  \label{thm:value-fT}
  Let \(\gamma\in C^{2}(BL)\), and let \(\alpha\colon H_{1}(L;\Z)\to\kk\)
  be the function corresponding to it under the isomorphism~\eqref{eq:iso-HomNkk-Z1-Z2}.
  \begin{enumroman}
  \item
    \label{thm:value-fT-1}
    Assume that \(c=[1]\in C_{1}(\SimpSOne;\Z)\) is the tautological representative.
    Then the value of~\(f_{T}\) on the pull-back of~\(\gamma\) to~\(BT\)
    is given by
    \begin{equation*}
      \sum_{v\in V}\alpha(x_{v})\,t_{v}.
    \end{equation*}
  \item
    \label{thm:value-fT-2}
    If \( c=\sum  c_{m}[m]\in C_{1}(\SimpSOne)\) is an arbitrary representative,
    then for any~\(v\in V\) the coefficient in front of~\(t_{v}\) becomes
    \begin{equation*}
      \sum c_{m}\,\alpha(m x_{v}).
    \end{equation*}
  \end{enumroman}
\end{lemma}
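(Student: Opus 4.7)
The plan is to prove part~(ii) directly; part~(i) is then the specialization $c_{m} = \delta_{m,1}$. All the work lies in unwinding the construction of $f_{T}$ from the companion paper in cochain degree~$2$, then tracking naturality under the projection $\pi\colon T\to L$.

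First I would recall from \citegersten{Sec.~\refgersten{sec:formality-BT}} that $f_{T}\colon C^{*}(BT)\to \kkV$ is built from a twisting cochain assembled out of the chosen $1$-cycles $c_{v}\in C_{1}(T)$ representing the basis elements $y_{v}$. Unwinding this construction in cochain degree~$2$ should yield the formula
\begin{equation*}
  f_{T}(\delta) = \sum_{v\in V} \delta(z_{v})\,t_{v}
  \qquad\text{for any $\delta \in C^{2}(BT)$,}
\end{equation*}
where $z_{v}\in C_{2}(BT)$ is obtained from $c_{v}$ by the natural chain map $C_{1}(T)\to C_{2}(BT)$ sending the $1$-simplex $\barsim{x}\in T$ to the $2$-simplex $\barbarsim{x}\in BT$ for $x\in N$. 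With $c=\sum_{m} c_{m}[m]$ this gives $c_{v}=\sum_{m} c_{m}\,\barsim{my_{v}}$ and hence $z_{v}=\sum_{m} c_{m}\,\barbarsim{my_{v}}$.

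Second I would apply this to the pullback $\delta=(B\pi)^{*}\gamma$, using that $B\pi\colon BT\to BL$ sends the $2$-simplex $\barbarsim{y}$ to $\barbarsim{\pi_{*}y}$ for $y\in H_{1}(T)$, together with $\pi_{*}(y_{v})=x_{v}$. Under~\eqref{eq:iso-HomNkk-Z1-Z2}, the cocycle $\gamma=\hat\alpha$ satisfies $\hat\alpha(\barbarsim{x})=\alpha(x)$ for $x\in H_{1}(L)$, and consequently
\begin{equation*}
  ((B\pi)^{*}\gamma)(z_{v}) = \sum_{m} c_{m}\,\gamma(\barbarsim{mx_{v}}) = \sum_{m} c_{m}\,\alpha(mx_{v}),
\end{equation*}
proving~(ii). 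Setting $c_{m}=\delta_{m,1}$ collapses the sum to $\alpha(x_{v})$, giving~(i). The main (and only non-routine) obstacle is the first step: confirming that $f_{T}$ in degree~$2$ really does reduce to pairing with the $2$-chains $z_{v}$. This depends on the particular hga construction in~\cite{Franz:2018a} and is not formal; once granted, everything else is naturality and the short bookkeeping computation above.
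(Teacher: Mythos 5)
Your overall structure matches the paper's: identify the degree-$2$ part of $f_{T}$ as pairing against certain distinguished $2$-chains in~$BT$, then use naturality under the map~$BT\to BL$ to reduce to $\alpha$ evaluated on~$x_{v}$. The cosmetic reorganization (proving~(ii) first and specializing to~(i), rather than proving~(i) and linearizing) is harmless.

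However, the step you flag yourself as the "main and only non-routine obstacle" is genuinely a gap, not just a deferral. You assert, without verification, that the degree-$2$ part of~$f_{T}$ is computed by pairing with $z_{v}$, where $z_{v}$ is obtained from $c_{v}$ by the "natural chain map" $\barsim{x}\mapsto\barbarsim{x}$. That is the \emph{conclusion} of a computation, not an input you can read off directly from the construction. What the construction of $f_{T}$ in~\cite{Franz:2018a} actually gives you is that the coefficient of~$t_{v}$ is $\beta(b_{v})$, where $b_{v}=\pi_{*}S(c_{v}\cdot e_{0})$ with $\pi\colon ET\to BT$ the universal bundle and $S$ the canonical contracting homotopy on~$C(ET)$. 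The identification $b_{v}=\barbarsim{y_{v}}$ for $c_{v}=\barsim{y_{v}}$ requires unwinding the simplicial formulas for~$EG$, $S$, the $C(G)$-action, and~$\pi_{*}$ (a short computation, but one that actually has to be done, and on which the whole lemma hinges). The paper carries this out explicitly; your proposal leaves it as a conjectural black box. If you fill in that computation, the rest of your argument is correct and essentially identical to the paper's.
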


\begin{proof}
  Let \(\beta\in C^{2}(BT)\) be the pull-back of~\(\gamma\),
  and let \(\tilde\beta\colon H_{1}(T;\Z)\to\kk\) be the corresponding function.

  By inspection of the construction of~\(f_{T}\) in~\citegersten{Sec.~\refgersten{sec:construction-f}}
  (where it is called \(\ffbar^{*})\), the coefficient of~\(t_{v}\) in~\(f_{T}(\beta)\)
  is the value of~\(\beta\) on the \(2\)-chain
  \begin{equation}
    b_{v} = \pi_{*} S( c_{v}\cdot e_{0})
  \end{equation}
  where \(\pi\colon ET\to BT\) is the universal \(T\)-bundle,
  \(S\colon C(ET)\to C(ET)\) the canonical homotopy contracting
  \(ET\) to the basepoint~\(e_{0}\). Moreover,
  \( c_{v}\cdot e_{0}\) refers to the action of~\(C(G)\) on~\(C(EG)\).

  If \(c=[1]\), we have by the definition of all the objects just mentioned that
  \begin{align}
    b_{v}
    &= \pi_{*} S( c_{v}\cdot e_{0})
    = \pi_{*} S( c_{v}\cdot \barsim{1_{0}})
    = \pi_{*} S(\barsim{\barsim{y_{v}},1_{0}}) \\
    &= \pi_{*}(\barsim{1_{2},\barsim{y_{v}},1_{0}})
    = \barsim{\barsim{y_{v}},1_{0}} = \barbarsim{y_{v}},
  \end{align}
  \cf~\citegersten{Sec.~\refgersten{sec:universal-bundles}} and~\cite[\S 21]{May:1968}.
  In other words, \(b_{v}\) is the \(2\)-simplex corresponding to~\(c_{v}\)
  via the bijections~\eqref{eq:iso-N-T} and~\eqref{eq:iso-N-BT}.
  This implies \(\beta(b_{v})=\tilde\beta(y_{v})\) by the isomorphisms~\eqref{eq:iso-HomNkk-Z1-Z2}.
  Since these isomorphisms are natural in~\(T\), we get \(\beta(b_{v})=\alpha(x_{v})\).
  This proves the first part.

  If \(c\) is a linear combination of simplices, then
  we have to do the above computation for any simplex~\([m]\) appearing in~\(c\)
  and form the resulting linear combination.
  Under the bijection~\eqref{eq:iso-N-T}, the simplex~\([m x_{v}]\) corresponds
  to~\(m x_{v}\), which gives the second claim.
\end{proof}

\section{The Koszul complex}
\label{sec:koszul}

We define the dga~\(\Kl_{\Sigma}\) as the Koszul complex of the \(H^{*}(BL)\)-algebra~\(\kk[\Sigma]\).
This is the tensor product
\begin{equation}
  \Kl_{\Sigma} = H^{*}(L)\otimes \kk[\Sigma]
\end{equation}
with componentwise multiplication.
We write elements of~\(\Kl_{\Sigma}\) in the form~\(\alpha f\) with~\(f\in\kk[\Sigma]\)
and~\(\alpha\in H^{*}(L)\).
The differential is determined by the composition \(H^{*}(BL)\to \kkV \to\kk[\Sigma]\);
one has
\begin{equation}
  \label{eq:def-d-Kl}
  d f = 0,
  \qquad\text{and}\quad
  d\alpha = -\sum_{v\in V}\alpha( x_{v})\,t_{v}
  \quad\text{if \(\deg{\alpha}=1\).}
\end{equation}
Recall that \( x_{v}\in H_{1}(L;\Z)\) is the image of
the \(v\)-th canonical generator of~\(H_{1}(T;\Z)\) under the projection~\(T\to L\) with kernel~\(K\).
The assignment~\(\Sigma\mapsto\Kl_{\Sigma}\) is contravariant with respect to
vertex-preserving morphisms of simplicial posets because the latter are compatible with folding maps
and therefore induce morphisms of \(\kk[V]\)-algebras between face rings.

\begin{proposition}
  \label{thm:dga-model-ZZ}
  Assume that \(K=1\), so that \(T=L\).
  Then \(\Kl_{\Sigma}\) is quasi-iso\-mor\-phic to~\(C^{*}(\ZZ_{\Sigma})\) as a dga.
\end{proposition}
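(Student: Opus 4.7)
The plan is to combine \Cref{thm:HKZZSigma-Tor} for the case \(K=1\) with the hga formality~\(f_{T}\colon C^{*}(BT)\to\kkV\) from \Cref{thm:formality-DJ-short}, and then invoke the classical comparison of Koszul- and bar-type resolutions over~\(\kkV\). With \(K=1\) and \(L=T\), \Cref{thm:HKZZSigma-Tor} provides a zigzag of dga quasi-iso\-mor\-phisms
\[
\BB_{f_{\Sigma}}(\kk,C^{*}(BT),\kk[\Sigma])\simeq C^{*}(\ZZ_{\Sigma}),
\]
whose twisted differential is governed by the composition \(C^{*}(BT)\to C^{*}(DJ_{\Sigma})\xrightarrow{f_{\Sigma}}\kk[\Sigma]\). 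By naturality of~\(f\) under the folding map, this composition factors as \(C^{*}(BT)\xrightarrow{f_{T}}\kkV\hookrightarrow\kk[\Sigma]\).

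The next step is to replace \(C^{*}(BT)\) by~\(\kkV\) inside the bar construction. The map \(\BB f_{T}\otimes\id_{\kk[\Sigma]}\colon \BB C^{*}(BT)\otimes\kk[\Sigma]\to\BB\kkV\otimes\kk[\Sigma]\) is compatible with the twisted differentials by the factorization just noted and is multiplicative because~\(f_{T}\) is a morphism of hgas, so that \(\BB f_{T}\) is a morphism of dg bialgebras. A filtration by bar length, as in the proof of \Cref{thm:Psi-quiso}, shows it is a quasi-iso\-mor\-phism, using \(1\)-connectivity and \(\kk\)-torsion-freeness of both \(C^{*}(BT)\) and~\(\kkV\).

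Define now a dga homomorphism \(\Phi\colon\Kl_{\Sigma}\to\BB\kkV\otimes\kk[\Sigma]\) by sending each exterior generator \(y_{v}\in H^{1}(L)\) to~\([t_{v}]\) and acting as the identity on~\(\kk[\Sigma]\). Since \(K=1\), one has \(x_{v}=y_{v}\), hence \(dy_{v}=-t_{v}\) by~\eqref{eq:def-d-Kl}, matching the bar differential \(d[t_{v}]=-t_{v}\) from~\eqref{eq:def-d-bar}. As the hga~\(\kkV\) has trivial operations~\(\EE_{k}\) for~\(k\ge 1\), formula~\eqref{eq:prod-bar-1-1} yields \([t_{v}]\circ[t_{w}]=[t_{v}|t_{w}]-[t_{w}|t_{v}]\), which realises the exterior anticommutativity of the~\(y_{v}\); multiplicativity of~\(\Phi\) on longer exterior monomials follows by induction on length.

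Finally, \(\Phi\) is a quasi-iso\-mor\-phism by the classical comparison of resolutions: both the two-sided Koszul complex \(\Lambda(y_{v})\otimes\kkV\) and the two-sided bar construction \(\BB\kkV\otimes\kkV\) are \(\kkV\)-free resolutions of~\(\kk\), and the evident map between them is a quasi-iso\-mor\-phism; tensoring over~\(\kkV\) with~\(\kk[\Sigma]\) preserves this and yields exactly~\(\Phi\), so both sides compute \(\Tor^{*}_{\kkV}(\kk,\kk[\Sigma])\). The main delicacy is preserving multiplicativity at every stage, and in particular at the middle step it is essential to use the full hga structure on~\(f_{T}\) rather than merely the fact that it is a dga quasi-iso\-mor\-phism.
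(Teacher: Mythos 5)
Your argument is correct and essentially reproduces the paper's proof: it runs the same zigzag
\[
  C^{*}(\ZZ_{\Sigma}) \;\simeq\; \Bar{C^{*}(DJ_{\Sigma})}{C^{*}(BT)} \;\simeq\; \BB_{f_{\Sigma}}(\kk,\kkV,\kk[\Sigma]) \;\simeq\; \Kl_{\Sigma},
\]
using the hga formality of both \(f_{T}\) and \(f_{\Sigma}\), and the same explicit comparison map sending the exterior generators to \([t_{v}]\in\BB\kkV\). The only (cosmetic) differences are in sequencing and phrasing: the paper replaces both the base \(C^{*}(BT)\to\kkV\) and the module \(C^{*}(DJ_{\Sigma})\to\kk[\Sigma]\) in a single step (the ``obvious generalization of \Cref{thm:Psi-quiso}''), whereas you first invoke \Cref{thm:HKZZSigma-Tor} and then apply \(\BB f_{T}\otimes\id\); and you justify the final quasi-isomorphism via the comparison theorem for \(\kkV\)-free resolutions (Koszul versus two-sided bar) rather than the paper's ``standard spectral sequence argument,'' which amounts to the same computation.
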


This answers a question posed by Berglund~\cite[Question~5]{Berglund:2010}.
This result (with a shorter proof) is already implicit in~\cite[Thm.~1.3]{Franz:2003a}.
Using rational coefficients and polynomial differential forms,
Panov--Ray~\cite[Sec.~6]{PanovRay:2008} showed that \(\Kl_{\Sigma}\)
is quasi-iso\-mor\-phic to~\(\APL^{*}(\ZZ_{\Sigma})\) as a cdga;
we will come back to this in \Cref{sec:realcoeffs}.

\begin{proof}
  Analogously to \Cref{thm:HKZZSigma-Tor},
  we get from our assumptions, \Cref{thm:formality-DJ-short} and
  the obvious generalization of \Cref{thm:Psi-quiso} a quasi-iso\-mor\-phism of dgas
  \begin{equation}
    \Bar{C^{*}(DJ_{\Sigma})}{C^{*}(BT)} \to \BB_{f_{\Sigma}}(\kk,\kkV,\kk[\Sigma]).
  \end{equation}
  The map
  \begin{equation}
    \label{eq:map-HT-BkV}
    H^{*}(T) \to \BB (\kkV),
    \quad
    \alpha_{v_{1}}\cdots\alpha_{v_{k}} \mapsto
    [t_{v_{k}}]\circ\dots\circ[t_{v_{k}}]
  \end{equation}
  is known to be a quasi-iso\-mor\-phism of dgas.
  (Here we assume that our basis elements for~\(H^{1}(T)\)
  correspond to those for~\(H^{2}(BT)\).
  The map~\eqref{eq:map-HT-BkV}, however, is independent of the chosen bases.)
  Note that the product on~\(\BB (\kkV )\)
  is the usual shuffle product as the hga structure is trivial.

  A standard spectral sequence argument extends this to the map of dgas
  \begin{equation}
    \label{eq:map-Kl-barHBT}
  \begin{split}
    \Kl_{\Sigma}=H^{*}(T)\otimes\kk[\Sigma] &\to \BB_{f_{\Sigma}}(\kk,\kkV,\kk[\Sigma]),
    \\
    \alpha_{v_{1}}\cdots\alpha_{v_{k}} f &\mapsto
    [t_{v_{k}}]\circ\dots\circ[t_{v_{k}}]\circ f.
  \end{split}    
  \end{equation}
  Note that it is a chain map by the definition of the differentials~\eqref{eq:def-d-Kl}
  and~\eqref{eq:def-d-bar}.
  We conclude with \Cref{thm:quiso-bar-DJ}.
\end{proof}

The quasi-iso\-mor\-phisms in \Cref{thm:formality-DJ-short} depend
on the choice of a representative~\( c\in C_{1}(\SimpSOne;\Z)\)
of the canonical generator of~\(H_{1}(\SimpSOne;\Z)=\Z\).
What stops us from extending \Cref{thm:dga-model-ZZ}
to an arbitrary quotient~\(T\to L\) is that
it might be (and, an view of \Cref{ex:intro}, generally is) impossible
to choose representatives in~\(C_{1}(L)\) of a basis for~\(H_{1}(L;\Z)\)
such that the resulting quasi-iso\-mor\-phism~\(f_{L}\colon C^{*}(BL)\to H^{*}(BL)\)
makes the left square in the diagram
\begin{equation}
  \begin{tikzcd}
    C^{*}(BL) \arrow{d}{f_{L}} \arrow{r} & C^{*}(BT) \arrow{d}{f_{T}} \arrow{r} & C^{*}(DJ_{\Sigma}) \arrow{d}{f_{\Sigma}} \\
    H^{*}(BL) \arrow{r} & \kkV \arrow{r} & \kk[\Sigma]
  \end{tikzcd}
\end{equation}
commute.
Given that we are interested in the multiplicative structure,
we use a different approach based on \Cref{thm:HKZZSigma-Tor}.

Choose a basis~\( x_{1}\),~\dots,~\( x_{n}\) for~\(H_{1}(L;\Z)\)
and let \(\alpha_{1}\),~\dots,~\(\alpha_{n}\in H^{1}(L;\Z)\)
be the dual basis.
We write \(\coord{i}{x}=\alpha_{i}(x)\) for the corresponding
\(i\)-th coordinate of~\(x\in H_{1}(L)\).
Recall that \(x_{v}\in H_{1}(L;\Z)\) with~\(v\in V\)
is the image of the \(v\)-th canonical basis vector of~\(H_{1}(T;\Z)\).
An ordering of the \(m=|V|\)~vertices in~\(V\) gives the \newterm{characteristic matrix}
\begin{equation}
  \Lambda =
  \begin{bmatrix}
    x_{v_{1}}^{1} & \dots & x_{v_{m}}^{1} \\
    \vdots & \ddots & \vdots \\
    x_{v_{1}}^{n} & \dots & x_{v_{m}}^{n}
  \end{bmatrix}
  \in \Z^{n\times m}
\end{equation}
encoding the projection map~\(T\to L\).

By~\eqref{eq:iso-HomNkk-Z1-Z2}, our basis for~\(H^{1}(L;\Z)\) corresponds
to a basis for~\(H^{2}(BL;\Z)\)
with canonical representatives~\(\gamma_{1}=\hatalpha_{1}\),~\dots,~\(\gamma_{n}=\hatalpha_{n}\in C^{2}(BL;\Z)\).
We define the map
\begin{equation}
  \label{eq:def-Phi}
\begin{split}
  \Phi_{\Sigma}\colon \Kl_{\Sigma} = H^{*}(L)\otimes\kk[\Sigma] &\to \BB_{f_{\Sigma}}(\kk,C^{*}(BL),\kk[\Sigma]),
  \\
  \alpha_{i_{1}}\cdots\alpha_{i_{k}} f &\mapsto
  [\gamma_{i_{1}}]\circ\dots\circ[\gamma_{i_{k}}]\circ f
\end{split}  
\end{equation}
where~\(1\le i_{1}<\dots<i_{k}\le n\).

\begin{proposition}
  \label{thm:iso-tor-bar}
  The map~\(\Phi_{\Sigma}\) is a quasi-iso\-mor\-phism of complexes, natural with respect
  to vertex-preserving morphisms of simplicial posets.
  In particular, it induces an isomorphism of graded \(\kk\)-modules
  \begin{equation*}
    \HK^{*}(\ZZ_{\Sigma}) \cong \Tor_{H^{*}(BL)}(\kk[\Sigma],\kk).
  \end{equation*}
\end{proposition}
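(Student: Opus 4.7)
My plan is to show that $\Phi_\Sigma$ is a chain map by a direct computation on generators, and then to establish the quasi-isomorphism via a spectral sequence comparison obtained from a filtration by polynomial degree.

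First, I would verify that $\Phi_\Sigma$ is a chain map. On an exterior generator $\alpha_i$, formula~\eqref{eq:def-d-bar} applied to $[\gamma_i]$ reduces to the twisting term $d[\gamma_i] = -f_\Sigma(\gamma_i)$ because $\gamma_i$ is a cocycle; by Lemma~\ref{thm:value-fT}\,\ref{thm:value-fT-1}, this equals $-\sum_{v\in V} \coord{i}{x_v}\,t_v = \Phi_\Sigma(d\alpha_i)$. For a product of generators, Proposition~\ref{thm:def-prod-bar} makes $\circ$ the multiplication of a dga, and since each $[\gamma_{i_j}]$ has odd bar degree~$1$, the Leibniz rule yields
\[
d\bigl([\gamma_{i_1}] \circ \cdots \circ [\gamma_{i_k}]\bigr)
= \sum_{j=1}^{k} (-1)^{j-1}\,[\gamma_{i_1}] \circ \cdots \circ d[\gamma_{i_j}] \circ \cdots \circ [\gamma_{i_k}].
\]
Substituting $d[\gamma_{i_j}] = -f_\Sigma(\gamma_{i_j}) \in \kk[\Sigma]$ and expanding via Proposition~\ref{thm:def-prod-bar}, all correction terms involving the operations~$\EE_i$ applied to elements of~$\kk[\Sigma]$ vanish, because $\kk[\Sigma]$ carries the trivial hga structure. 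The polynomial factor then commutes past the remaining bar factors to the coefficient side, matching $\Phi_\Sigma$ applied to the corresponding term in $d(\alpha_{i_1}\cdots\alpha_{i_k})$.

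For the quasi-isomorphism, I would filter both complexes by polynomial degree, that is, by the degree of the component in~$\kk[\Sigma]$. On~$\Kl_\Sigma$ the Koszul differential strictly raises polynomial degree by~$2$. On the one-sided bar, $d_{\BB A}\otimes \id$ preserves polynomial degree, $\id \otimes d_{\kk[\Sigma]}$ vanishes, and the twisting term strictly raises polynomial degree since $f_\Sigma$ sends $C^{\geq 2}(BL)$ into $\kk[\Sigma]^{\geq 2}$ (using that $BL$ is $1$-reduced). Both filtrations are bounded below and finite in each total degree, so the associated spectral sequences converge strongly, and $\Phi_\Sigma$ visibly preserves the filtration.

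On $E_1$, the source reduces to $H^*(L)\otimes\kk[\Sigma]$ with trivial differential, while the target becomes $H(\BB\,C^*(BL)) \otimes \kk[\Sigma]$. The canonical identification $H(\BB\,C^*(BL)) = \Tor_{C^*(BL)}(\kk,\kk) \cong H^*(L)$---coming from the Eilenberg--Moore theorem applied to the path-loop fibration of~$BL$, or equivalently, since $BL$ is a product of Eilenberg--Mac\,Lane spaces $K(\Z,2)$, from the Koszul resolution of~$\kk$ over~$H^*(BL)$---sends the class of~$[\gamma_i]$ to~$\alpha_i$. Under this identification, $\Phi_\Sigma$ induces the identity on~$E_1$ and is therefore a quasi-isomorphism by convergence. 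Naturality with respect to vertex-preserving morphisms is immediate since the chosen basis of~$H_1(L)$, the cocycles~$\gamma_i$, and all other structure maps are independent of~$\Sigma$, while the dependence on~$\Sigma$ enters only through the functorial assignment $\Sigma \mapsto \kk[\Sigma]$. The main obstacle I anticipate is the sign bookkeeping in the chain-map verification; conceptually, the key point is that higher hga operations on $C^*(BL)$ never survive after applying~$f_\Sigma$, so only the ``commutative'' part of the computation matters.
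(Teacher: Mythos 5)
Your proof is correct, and the overall strategy — verify that $\Phi_\Sigma$ is a chain map by reducing to the generators $[\gamma_i]$ and using centrality of $\kk[\Sigma]$, then run a spectral sequence comparison — matches the paper. The one notable difference is the choice of filtration. The paper filters by bar length (equivalently, by exterior degree on the source), so that the $E_1$ page of the map is the inclusion of the Koszul complex $\Kl_\Sigma$ into the one-sided bar construction $\Bar{\kk[\Sigma]}{H^*(BL)}$, which is the classical quasi-isomorphism for the polynomial algebra $H^*(BL)$; the isomorphism is thus obtained on $E_2$. You instead filter by degree in $\kk[\Sigma]$, so that on the target $d_0$ is the full bar differential of $\BB\,C^*(BL)$ and $E_1 = H(\BB\,C^*(BL))\otimes\kk[\Sigma]$; the isomorphism then appears already on $E_1$ via the standard identification $H(\BB\,C^*(BL))\cong H^*(L)$ sending $[\gamma_i]\mapsto\alpha_i$. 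Both filtrations are bounded below and finite in each total degree (using that $BL$ is $1$-reduced), so both spectral sequences converge and the comparison theorem applies. Your version is arguably a touch cleaner in that it closes one page earlier, at the cost of invoking the Eilenberg--Moore identification of $\Tor_{C^*(BL)}(\kk,\kk)$ with $H^*(L)$ rather than the purely algebraic Koszul/bar comparison. One small wording issue: on the source, $E_1$ carries the Koszul differential as $d_1$, not the zero differential; what is trivial is $d_0$. Since the comparison theorem only needs the $E_1$ map of bigraded modules to be an isomorphism, this does not affect the argument, but it is worth phrasing precisely.
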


For simplicial complexes,
this was proven in~\cite[Thm.~3.3.2]{Franz:2001},
see also~\cite[Thm.~1.2]{Franz:2003a} and~\cite[Thm.~1.2]{Franz:2006}.
Note that \(\Phi_{\Sigma}\) does not preserve products for~\(L\ne1\)
since \(C^{*}(BL)\) is not (graded) commutative.

\begin{proof}
  Even if \(\Phi_{\Sigma}\) is not multiplicative,
  we can imitate the proof of \Cref{thm:dga-model-ZZ}.
  
  Both~\(\Kl_{\Sigma}\) and the one-sided bar construction being dgas
  with~\(\kk[\Sigma]\) contained in their centres,
  the compatibility of~\(\Phi_{\Sigma}\) with differentials follows as before
  from the definitions~\eqref{eq:def-d-BAA} and~\eqref{eq:def-d-Kl}
  because \(d[\gamma_{i}]\) is the image of the cocycle~\(\gamma_{i}\) in~\(\kk[\Sigma]\).

  Passing to the usual spectral sequences, we get on the first page
  the generalization
  \begin{equation}
    \Kl_{\Sigma} = H^{*}(L)\otimes\kk[\Sigma] \to \Bar{\kk[\Sigma]}{H^{*}(BL)}
  \end{equation}
  of the map~\eqref{eq:map-Kl-barHBT} to the case of arbitrary~\(L\),
  which becomes an isomorphism on the second page.
\end{proof}

\begin{remark}
  \label{rem:finitedim-complex}
  If the \(K\)-action on~\(\ZZ_{\Sigma}\) is \(\kk\)-free, 
  then there is a finite-dimensional subcomplex~\(\MM_{\Sigma}\subset\Kl_{\Sigma}\)
  such that the inclusion is a quasi-iso\-mor\-phism.
  This was observed in~\cite[Lemma~6.1]{Franz:2003a},~\cite[Lemma~6.1]{Franz:2006}.
  
  For moment-angle complexes, this subcomplex allows
  to prove Hochster's decomposition
  of the torsion product~\(\Tor_{H^{*}(BT)}(\kk,\kk[\Sigma])\)
  into the reduced cohomology of full subcomplexes of~\(\Sigma\),
  see~\cite[Thm.~3.2.4]{BuchstaberPanov:2015}.
  One can also turn~\(\MM_{\Sigma}\) into a dga
  such that the canonical projection~\(\Kl_{\Sigma}\to\MM_{\Sigma}\) is multiplicative,
  which leads to Baskakov's product formula, \cf~\cite[Thm.~4.5.7]{BuchstaberPanov:2015}.
  It is unclear how to generalize these results to partial quotients
  because there is no canonical projection anymore.
\end{remark}

\section{A twisted product on the Koszul complex}
\label{sec:twisted-prod}

We have remarked already that the deficiency of the map~\(\Phi_{\Sigma}\)
defined in \Cref{thm:iso-tor-bar} is that it is not multiplicative.
We now address this problem
in a way that is motivated by the discussion in \Cref{sec:tori}, in particular \Cref{thm:d-q-ij},
as well as by the identities~\eqref{eq:d-gamma-ii-h} and~\eqref{eq:d-gamma-ij-h} in the proof of \Cref{thm:iso-tor-general} below.

We consider the collection of functions~\(\alpha_{ij}\colon H_{1}(L;\Z)\to\Z\)
with~\(1\le j\le i\le n\) whose values on~\(x\in H_{1}(L;\Z)\) are given by
\begin{equation}
  \label{eq:def-gamma-ij}
  \alpha_{ij}(x) =
  \begin{cases}
    \frac{1}{2}\coord{i}{x}(\coord{i}{x}-1) & \text{for~\(i=j\),} \\
    \coord{i}{x}\coord{j}{x} & \text{for~\(i>j\).}
  \end{cases}
\end{equation}
Recall that \(x^{i}=\alpha_{i}(x)\) is
the \(i\)-th coordinate of~\(x\) with respect to the chosen basis for~\(H_{1}(L;\Z)\).
Because the dual basis elements~\(\alpha_{i}\) correspond to the cocycles~\(\gamma_{i}=\hat\alpha_{i}\in C^{2}(BL)\)
under the isomorphism~\eqref{eq:iso-HomNkk-Z1-Z2}, \Cref{thm:d-q-ij}\,\ref{thm:d-q-ij-2} tells us that
the cochains~\(\gamma_{ij}=\hatalpha_{ij}\in C^{2}(BL)\) satisfy
\begin{equation}
  \label{eq:d-gamma-ij}
  d\,[\gamma_{ii}] = [\gamma_{i}]\circ[\gamma_{i}]
  \qquad\text{and}\qquad
  d\,[\gamma_{ij}] = [\gamma_{i}]\circ[\gamma_{j}]+[\gamma_{j}]\circ[\gamma_{i}]
\end{equation}
for~\(i>j\), and by \Cref{thm:value-fT}\,\ref{thm:value-fT-1}
their images in~\(\kkZ[V]\) are
\begin{equation}
  \label{eq:twisting-general}
  q_{ij} =
  \begin{cases}
  \frac{1}{2}\sum_{v\in V}\coord{i}{ x_{v}}(\coord{i}{ x_{v}}-1)\,t_{v} & \text{if~\(i=j\),} \\
  \sum_{v\in V}\coord{i}{ x_{v}}\coord{j}{ x_{v}}\,t_{v} & \text{if~\(i>j\).}
  \end{cases}
\end{equation}

Using these elements, we define the map
\begin{equation}
  \label{eq:def-twisted-product}
  \tilde\mu = \mu_{\Kl}\prod_{i\ge j}\Bigl(\id-q_{ij}\,\iota(x_{i})\otimes\iota(x_{j})\Bigr)
  \colon \Kl_{\Sigma}\otimes\Kl_{\Sigma} \to \Kl_{\Sigma}
\end{equation}
where \(\mu_{\Kl}\) is the canonical product on~\(\Kl_{\Sigma}\) and
\begin{equation}
  \iota(x)\colon \Kl_{\Sigma}\to\Kl_{\Sigma},
  \quad
  \alpha f \mapsto (x\cdot\alpha) f
\end{equation}
the contraction mapping.
Because two operators~\(\iota(x)\) and~\(\iota(y)\) with \(x\),~\(y\in H_{1}(L)\) anticommute,
all operators~\(\iota(x_{i})\otimes\iota(x_{j})\) commute
in~\(\Kl_{\Sigma}\otimes\Kl_{\Sigma}\) by the Koszul sign rule,
so that the order in which they are applied does not matter.
For instance, for any~\(i\) and~\(j\) we have
\begin{equation}
  \label{eq:example-mult}
  \alpha_{i}*\alpha_{j} =
  \begin{cases}
    \alpha_{i}\alpha_{j} & \text{if \(i<j\),} \\
    q_{ii} & \text{if \(i=j\),} \\
    -\alpha_{j}\alpha_{i}+q_{ij} & \text{if \(i>j\).} \\
  \end{cases}
\end{equation}
We defer further examples for a moment
and show first that the multiplication~\eqref{eq:def-twisted-product} is associative.

\begin{lemma}
  \label{thm:twisted-dga}
  The map~\(\tilde\mu\) defines an associative product~\(*\),
  thus giving \(\Kl_{\Sigma}\) another dga structure besides the canonical one.
  The new product is again natural with respect to vertex-preserving morphisms of simplicial posets.
\end{lemma}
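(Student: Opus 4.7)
The argument has three parts: $\tilde\mu$ is well-defined, it is associative, and the differential is a graded derivation; naturality will be immediate from the construction. For the first part, each operator $P_{ij} = \iota(x_i)\otimes\iota(x_j)$ on $\Kl_\Sigma\otimes\Kl_\Sigma$ is nilpotent with $P_{ij}^{2}=0$, and any two such operators commute by the Koszul sign rule (the two odd-degree contractions, one in each tensor factor, contribute two compensating signs when swapped). Hence $P = \prod_{i\ge j}(\id - q_{ij}P_{ij})$ reduces to a finite sum of pairwise commuting terms, and its expansion is independent of the chosen ordering of factors. Since every $q_{ij}$ lies in the central subalgebra $\kk[\Sigma]\subset\Kl_\Sigma$ and is annihilated by each contraction $\iota(x_k)$, the map $\tilde\mu$ is $\kk[\Sigma]$-bilinear, and it reduces to $\mu_{\Kl}$ whenever one of the two arguments lies in $\kk[\Sigma]$.

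For associativity, the plan is to identify $(\Kl_\Sigma,*)$ with a graded Clifford algebra over the central ring $\kk[\Sigma]$. Define a quadratic form $Q\colon H^{1}(L;\Z)\to\kk[\Sigma]$ by
\[
  Q\Bigl(\textstyle\sum_i c_i\alpha_i\Bigr) = \sum_i c_i^{2}\,q_{ii} + \sum_{i>j} c_i c_j\,q_{ij};
\]
its polarization then takes the value $q_{\max(i,j),\min(i,j)}$ on $(\alpha_i,\alpha_j)$ for $i\ne j$ and $2q_{ii}$ on $(\alpha_i,\alpha_i)$. The identities in \eqref{eq:example-mult} show that the generators $\alpha_i \in (\Kl_\Sigma,*)$ satisfy exactly the defining Clifford relations $\alpha_i*\alpha_i = Q(\alpha_i)$ and $\alpha_i*\alpha_j + \alpha_j*\alpha_i = Q(\alpha_i+\alpha_j)-Q(\alpha_i)-Q(\alpha_j)$. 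Hence there is a unique $\kk[\Sigma]$-algebra homomorphism from the Clifford algebra $Cl_{\kk[\Sigma]}(Q)$ to $(\Kl_\Sigma,*)$ extending $\alpha_i\mapsto\alpha_i$. By the graded PBW theorem for Clifford algebras, both sides are free $\kk[\Sigma]$-modules on the ordered monomial basis $\{\alpha_{i_{1}}\cdots\alpha_{i_{k}}:i_{1}<\dots<i_{k}\}$; an induction on exterior length, exploiting the fact that each factor $\id - q_{ij}P_{ij}$ executes exactly one Clifford normal-ordering step, shows that this homomorphism is a bijection. Thus $(\Kl_\Sigma,*)$ inherits the associativity of $Cl_{\kk[\Sigma]}(Q)$.

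For the Leibniz rule $d(a*b) = (da)*b + (-1)^{|a|}a*(db)$, both sides are $\kk[\Sigma]$-bilinear graded derivations of $(\Kl_\Sigma,*)$, so by the previous step it suffices to verify the identity on pairs from the generators $\alpha_1,\dots,\alpha_n$: this is a short direct calculation, using that $d\alpha_i\in\kk[\Sigma]$ is central and that $d$ annihilates $\kk[\Sigma]$ (so in particular $d q_{ij}=0$). Naturality under a vertex-preserving morphism $\Sigma\to\Sigma'$ is automatic, since the induced $\kk[V]$-algebra map $\kk[\Sigma']\to\kk[\Sigma]$ sends each universal element $q'_{ij}$ to $q_{ij}$, while the contractions $\iota(x_k)$ depend only on $H_{1}(L;\Z)$. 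The main technical obstacle is the PBW identification in the middle paragraph: one must carry out the length induction carefully enough to confirm that the Koszul signs produced by iterated applications of the factors $(\id - q_{ij}P_{ij})$ reproduce exactly the signs of Clifford normal-ordering on all ordered monomials, not merely on pairs of generators.
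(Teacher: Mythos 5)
Your plan is a genuinely different route from the paper's: you propose to identify $(\Kl_{\Sigma},*)$ with a graded Clifford algebra $Cl_{\kk[\Sigma]}(Q)$ and import associativity from there via a PBW basis, whereas the paper proves associativity by a direct computation. The relations in \eqref{eq:example-mult} do indeed match those of $Cl_{\kk[\Sigma]}(Q)$ for the quadratic form you write down, so the Clifford picture is the ``right'' conceptual explanation of the $*$-product. However, as written the middle paragraph is circular: the universal property of the Clifford algebra produces a homomorphism only into a target that is \emph{already known to be an associative unital $\kk[\Sigma]$-algebra}, so invoking it with $(\Kl_{\Sigma},*)$ as the target presupposes the very associativity you are trying to prove. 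Likewise, the subsequent bijectivity-plus-PBW argument cannot rescue this, since ``bijective algebra homomorphism'' already assumes the homomorphism property for the not-yet-associative $*$.

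The non-circular version of your plan is to build a $\kk[\Sigma]$-module isomorphism $\phi\colon Cl_{\kk[\Sigma]}(Q)\to\Kl_{\Sigma}$ by sending PBW monomials to PBW monomials (no multiplicative structure on the target used), transport the Clifford product through $\phi$ to get an \emph{a priori} associative product $\bullet$ on $\Kl_{\Sigma}$, and then prove $\bullet=\tilde\mu$ by induction on exterior length. But that induction is exactly what you flag at the end as ``the main technical obstacle'' and do not carry out; it carries essentially the whole burden of the proof and is no lighter than the paper's direct argument, which is quite short: since each contraction $\iota(x_{i})$ is a derivation of $\mu_{\Kl}$ and $\iota(x_{i})\iota(x_{i})=0$, both $\tilde\mu(\id\otimes\tilde\mu)$ and $\tilde\mu(\tilde\mu\otimes\id)$ collapse to $\mu_{\Kl}^{[3]}$ precomposed with the same product of pairwise-commuting factors $\id-q_{ij}\,\iota(x_{i})\otimes\iota(x_{j})$ inserted in the three tensor slots. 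Two smaller points: the paper obtains the chain-map (Leibniz) property for $\tilde\mu$ in one line from the anticommutation of each $\iota(x_{i})$ with $d$, avoiding the generator reduction you sketch (which, as stated, does not follow from $\kk[\Sigma]$-bilinearity alone but needs an induction using associativity); and the paper explicitly notes that associativity holds for arbitrary degree-two twisting elements $q_{ij}\in\kk[V]$ -- a fact made transparent by its direct computation and consistent with your Clifford interpretation.
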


This actually holds for any choice of twisting elements~\(q_{ij}\in\kk[V]\) (of degree~\(2\)).

\begin{proof}
  Each operator~\(\iota(x_{i})\) anticommutes with the differential,
  which implies that \(\iota(x_{i})\otimes\iota(x_{j})\) is a chain map
  and therefore also \(\tilde\mu\) by~\(\kk[\Sigma]\)-bilinearity.
  The element~\(1\in\Kl_{\Sigma}\) is a unit for the new product
  because \(\iota(x_{i})\,1=0\) for any~\(i\).
  
  To prove associativity, we expand
  \begin{equation}
  \begin{split}
    \mathrlap{\tilde\mu\,(\id\otimes\tilde\mu)}\quad \\
    &= \mu_{\Kl}\prod_{i\ge j}\Bigl(\id-q_{ij}\,\iota(x_{i})\otimes\iota(x_{j})\Bigr)
    (\id\otimes\mu_{\Kl})\prod_{k\ge l}\Bigl(\id-q_{kl}\,\id\otimes\iota(x_{k})\otimes\iota(x_{l})\Bigr) \\
    &= \iter{\mu_{\Kl}}{3}\prod_{i\ge j}\Bigl(\id-q_{ij}\,\iota(x_{i})\otimes\iota(x_{j})\otimes\id
    -q_{ij}\,\iota(x_{i})\otimes\id\otimes\iota(x_{j})\Bigr) \\
    &\qquad\qquad \prod_{k\ge l}\Bigl(\id-q_{kl}\,\id\otimes\iota(x_{k})\otimes\iota(x_{l})\Bigr)
  \end{split}    
  \end{equation}
  where \(\iter{\mu_{\Kl}}{3}\) denotes the triple product.
  Here we have used that each contraction mapping is a derivation of the
  canonical product. Now
  \begin{multline}
    \id-q_{ij}\,\iota(x_{i})\otimes\iota(x_{j})\otimes\id
    -q_{ij}\,\iota(x_{i})\otimes\id\otimes\iota(x_{j}) \\
    = \bigl(\id-q_{ij}\,\iota(x_{i})\otimes\iota(x_{j})\otimes\id\bigr)
    \bigl(\id-q_{ij}\,\iota(x_{i})\otimes\id\otimes\iota(x_{j})\bigr)
  \end{multline}
  since \(\iota(x_{i})\iota(x_{i})=0\). Hence \(\tilde\mu\,(\id\otimes\tilde\mu)\)
  equals \(\mu_{\Kl}^{[3]}\) composed with all operators of the form
  \begin{equation}
  \begin{split}
    \id-q_{ij}\,\iota(x_{i})\otimes\iota(x_{j})\otimes\id , \\
    \id-q_{ij}\,\iota(x_{i})\otimes\id\otimes\iota(x_{j}) , \\
    \id-q_{ij}\id\otimes\iota(x_{i})\otimes\iota(x_{j}) \;\,
  \end{split}    
  \end{equation}
  with~\(i\ge j\) (in any order). Computing \(\tilde\mu\,(\tilde\mu\otimes\id)\)
  in the same way leads to the same result, showing that \(\tilde\mu\) is associative.

  The new product is natural because the twisting terms~\(q_{ij}\) live in~\(\kkZ[V]\),
  and vertex-preserving morphisms of simplicial posets are compatible with the folding maps.
\end{proof}

Explicitly,
a product~\(\alpha f*\beta g\)
with~\(\alpha\in H^{1}(L)\) can be written as
\begin{equation}
  \label{eq:twisted-product-deg-1}
  \alpha f*\beta g = \alpha\beta fg
  + \sum_{i\ge j} (x_{i}\cdot\alpha)(x_{j}\cdot\beta)\,q_{ij} fg \,;
\end{equation}
the general case~\(\alpha=\alpha_{i_{1}}\!\cdots\alpha_{i_{k}}\)
with~\(i_{1}<\dots<i_{k}\) can be reduced to this by associativity
since \(\alpha=\alpha_{i_{1}}*\,\cdots\,*\alpha_{i_{k}}\).
For instance,
\begin{equation}
\begin{split}
  \alpha_{1}\alpha_{3} * \alpha_{1}\alpha_{2}
  &= \alpha_{1}*\bigl(\alpha_{3} * \alpha_{1}\alpha_{2}\bigr) \\
  &= \alpha_{1}*\bigl(\alpha_{1}\alpha_{2}\alpha_{3}+q_{31}\,\alpha_{2}-q_{32}\,\alpha_{1}) \\
  &= q_{11}\,\alpha_{2}\alpha_{3} + q_{31}\,\alpha_{1}\alpha_{2} - q_{11}q_{32}.
\end{split}  
\end{equation}

\begin{theorem}
  \label{thm:iso-tor-general}
  The map~\(H^{*}(\Phi_{\Sigma})\) induces an isomorphism of graded algebras
  \begin{equation*}
    H_{\KK}^{*}(\ZZ_{\Sigma}) \cong \Tor_{H^{*}(BL)}(\kk,\kk[\Sigma])
  \end{equation*}
  where the \(\Tor\)~term carries the product induced by the \(*\)-product on~\(\Kl_{\Sigma}\).
\end{theorem}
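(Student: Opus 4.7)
The plan is to upgrade \Cref{thm:iso-tor-bar} to the multiplicative level by constructing a dga morphism $\tildePhi_{\Sigma}\colon(\Kl_{\Sigma},*)\to\BB_{f_{\Sigma}}(\kk,C^{*}(BL),\kk[\Sigma])$ that is chain-homotopic to the quasi-isomorphism $\Phi_{\Sigma}$. Combined with \Cref{thm:HKZZSigma-Tor} and~\eqref{eq:iso-HK-HKK}, this will yield the desired isomorphism of algebras.

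The core computation happens at exterior degree~$\le 2$. By \Cref{thm:def-prod-bar} and~\eqref{eq:prod-bar-1-1}, we have $\Phi_{\Sigma}(\alpha_{i})\cdot\Phi_{\Sigma}(\alpha_{j})=[\gamma_{i}]\circ[\gamma_{j}]$ in the one-sided bar construction. The cocycles $\gamma_{ij}\in C^{2}(BL)$ from \Cref{sec:tori} supply explicit primitives: combining \Cref{thm:d-q-ij}\,\ref{thm:d-q-ij-2} with the last term of~\eqref{eq:def-d-bar} and \Cref{thm:value-fT}\,\ref{thm:value-fT-1}, the differential in $\BB_{f_{\Sigma}}(\kk,C^{*}(BL),\kk[\Sigma])$ satisfies, up to signs,
\begin{equation*}
  d[\gamma_{ii}]=[\gamma_{i}]\circ[\gamma_{i}]-q_{ii}, \qquad d[\gamma_{ij}]=[\gamma_{i}]\circ[\gamma_{j}]+[\gamma_{j}]\circ[\gamma_{i}]-q_{ij} \quad (i>j).
\end{equation*}
In view of~\eqref{eq:example-mult}, these identities say precisely that $\Phi_{\Sigma}(\alpha_{i}*\alpha_{j})$ and $\Phi_{\Sigma}(\alpha_{i})\cdot\Phi_{\Sigma}(\alpha_{j})$ are cohomologous, establishing multiplicativity for exterior monomials of length one.

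For higher exterior degree, I would define $\tildePhi_{\Sigma}(\alpha_{i_{1}}\cdots\alpha_{i_{k}}\,f)$ by starting from the leading term $[\gamma_{i_{1}}]\circ\cdots\circ[\gamma_{i_{k}}]\circ f$ and adding correction terms in which selected disjoint index pairs $(i_{p},i_{q})$ are replaced by single $[\gamma_{i_{p}i_{q}}]$-factors. Since the $*$-product of \Cref{sec:twisted-prod} has the form $\tilde\mu=\mu_{\Kl}\prod_{i\ge j}(\id-q_{ij}\,\iota(x_{i})\otimes\iota(x_{j}))$ and involves only pairwise twists, the associativity of both products (\Cref{thm:twisted-dga}) together with the length-two identity above should be enough to force $\tildePhi_{\Sigma}(a*b)=\tildePhi_{\Sigma}(a)\cdot\tildePhi_{\Sigma}(b)$ inductively. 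A standard spectral sequence argument with respect to the length filtration on $\BB\,C^{*}(BL)$ shows that the correction terms strictly lower filtration, so the associated graded of $\tildePhi_{\Sigma}$ recovers $\Phi_{\Sigma}$, which is a quasi-isomorphism by \Cref{thm:iso-tor-bar}; hence so is $\tildePhi_{\Sigma}$.

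The main obstacle is verifying that this inductive construction closes up to a strict dga morphism at all lengths, controlling the signs from~\eqref{eq:def-d-bar} and the Koszul rule, and ensuring that the higher hga operations $\EE_{k}$ with $k\ge 2$ on $C^{*}(BL)$ do not force additional correction terms beyond the pairwise $[\gamma_{ij}]$ already in use. Conceptually, the cleanest route may be to package $\tildePhi_{\Sigma}$ as arising from a twisting cochain $(\Kl_{\Sigma},*)\to C^{*}(BL)$, so that its multiplicativity becomes automatic from the standard formalism and the contributions of higher hga operations are absorbed into the recursive construction of the twisting cochain.
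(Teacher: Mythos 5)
Your proposal takes a fundamentally different route from the paper's proof, and that route cannot work as stated: a strict dga morphism $\tildePhi_{\Sigma}\colon(\Kl_{\Sigma},*)\to\BB_{f_{\Sigma}}(\kk,C^{*}(BL),\kk[\Sigma])$ of the form you describe does not exist in general. In cohomological degree~$1$, the target has no elements of length~$0$ (because $\kk[\Sigma]$ is evenly graded) and no elements of length~$\ge 2$ (because $BL$ is $1$-reduced), so $\tildePhi_{\Sigma}(\alpha_{i})$ must be a pure length-one element $[\gamma_{i}']$. The anticommutator $[\gamma_{i}']\circ[\gamma_{j}']+[\gamma_{j}']\circ[\gamma_{i}']$ computed from~\eqref{eq:prod-bar-1-1} has only a length-one component, namely $\bigl[\EE_{1}([\gamma_{i}'],[\gamma_{j}'])\bigr]+\bigl[\EE_{1}([\gamma_{j}'],[\gamma_{i}'])\bigr]$, while $\tildePhi_{\Sigma}(\alpha_{i}*\alpha_{j}+\alpha_{j}*\alpha_{i})=\tildePhi_{\Sigma}(q_{ij})=q_{ij}$ is a length-zero element. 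Equality would force $q_{ij}=0$, which fails generically (indeed, the whole point of the twisted product is that the $q_{ij}$ need not vanish). So the obstacle you flag at the end is not something that can be worked around by book-keeping; no choice of correction terms or absorption into a twisting cochain can make $\tildePhi_{\Sigma}$ strictly multiplicative.

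The paper instead keeps $\Phi_{\Sigma}$ unchanged and shows it is multiplicative \emph{up to homotopy}: it constructs a $\kk[\Sigma]$-bilinear chain homotopy $H\colon\Kl_{\Sigma}\otimes\Kl_{\Sigma}\to\BB_{f_{\Sigma}}(\kk,C^{*}(BL),\kk[\Sigma])$, built recursively from a helper operator $H_{1}$ that inserts a single $[\gamma_{ij}]$-factor while reordering the $[\gamma_{i}]$'s, such that
\begin{equation*}
  (dH+Hd)(\alpha\otimes\beta)=\Phi_{\Sigma}(\alpha)\circ\Phi_{\Sigma}(\beta)-\Phi_{\Sigma}(\alpha*\beta).
\end{equation*}
Since $\Phi_{\Sigma}$ is already known to be a quasi-isomorphism by \Cref{thm:iso-tor-bar}, this suffices. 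Your first two paragraphs correctly identify the central identities~\eqref{eq:d-gamma-ii-h} and~\eqref{eq:d-gamma-ij-h}; the fix is to redirect that observation toward constructing a homotopy rather than a strictly multiplicative replacement of $\Phi_{\Sigma}$.
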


\begin{proof}
  We show that the quasi-iso\-mor\-phism~\(\Phi=\Phi_{\Sigma}\)
  is multiplicative up to homotopy with respect to the twisted product on~\(\Kl_{\Sigma}\).

  Before we construct the \(\kk[\Sigma]\)-bilinear homotopy
  \begin{equation}
    H\colon \Kl_{\Sigma}\otimes\Kl_{\Sigma} \to \BB_{f_{\Sigma}}(\kk,C^{*}(BL),\kk[\Sigma]),
  \end{equation}
  we define an \(H^{*}(BK)\)-bilinear map~\(H_{1}\) with the same domain
  and codomain as follows: Let \(\alpha\),~\(\beta\in H^{*}(L)\). Write
  \(\beta=\alpha_{j_{1}}\cdots\alpha_{j_{l}}\) with~\(1\le j_{1}<\dots<j_{l}\le n\)
  and assume that \(\alpha\) is of the form~\(\tilde\alpha\,\alpha_{i}\)
  for some~\(1\le i\le n\) and
  some~\(\tilde\alpha\in\bigwedge(\alpha_{1},\dots,\alpha_{i-1})\).
  In this case we set
  \begin{equation}
    H_{1}(\alpha\otimes\beta) = (-1)^{\deg{\tilde\alpha}}
    \sum_{r=1}^{s}
    \Phi(\tilde\alpha)\circ\Phi(\alpha_{j_{1}}\cdots\alpha_{j_{r-1}})
    \circ[\gamma_{i j_{r}}]\circ\Phi(\alpha_{j_{r+1}}\cdots\alpha_{j_{l}})
  \end{equation}
  where \(s\) is the largest index such that \(j_{s}\le i\),
  or equal to~\(0\) if no such~\(s\) exists.
  The purpose of~\(H_{1}\) is to move \([\gamma_{i}]\) past all
  smaller factors~\([\gamma_{j_{r}}]\) with respect to our ordering of the basis.

  Now \(H\) is recursively defined by
  \begin{align}
    H(1\otimes \beta) &= 0, \\
    H(\alpha \otimes \beta) &=
      H_{1}(\alpha \otimes \beta) + H(\tilde\alpha\otimes\alpha_{i} * \beta)
  \end{align}
  for~\(\alpha\) and~\(\beta\) as above.
  Using the identities~\eqref{eq:d-gamma-ij}
  as well as the definition~\eqref{eq:def-twisted-product} of the twisted product,
  it is straightforward, albeit somewhat lengthy, to verify that
  \begin{equation}
    (dH+Hd)(\alpha\otimes\beta) = \Phi(\alpha)\circ\Phi(\beta)-\Phi(\alpha*\beta),
  \end{equation}
  as desired.
  The key point is that by the definition of~\(\gamma_{ij}\) and~\(q_{ij}\)
  as well as by that of the dga structure on the one-sided bar construction
  and~\eqref{eq:prod-bar-1-1} we have
  \begin{equation}
    \label{eq:d-gamma-ii-h}
    d[\gamma_{ii}]
    = [\gamma_{i}]\circ[\gamma_{i}]-q_{ii}
    = \Phi(\alpha_{i})\circ\Phi(\alpha_{i})-\Phi(\alpha_{i}*\alpha_{i})
  \end{equation}
  and similarly for~\(i>j\)
  \begin{equation}
    \label{eq:d-gamma-ij-h}
    d[\gamma_{ij}]
    = [\gamma_{i}]\circ[\gamma_{j}]+[\gamma_{j}]\circ[\gamma_{i}]-q_{ii}
    = \Phi(\alpha_{i})\circ\Phi(\alpha_{j})-\Phi(\alpha_{i}*\alpha_{j}),
  \end{equation}
  compare the formula~\eqref{eq:example-mult}.
\end{proof}

\begin{corollary}
  \label{thm:free-general}
  If \(\KK\) acts \(\kk\)-freely on~\(\ZZ_{\Sigma}\),
  then there is an isomorphism of graded algebras
  \begin{equation*}
    H^{*}(\ZZ_{\Sigma}/\KK) \cong \Tor_{H^{*}(BL)}(\kk,\kk[\Sigma])
  \end{equation*}
  with the twisted \(*\)-product on the right-hand side.
  This isomorphism is natural with respect to vertex-preserving morphisms of simplicial posets.
\end{corollary}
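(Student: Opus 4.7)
The plan is to combine \Cref{thm:iso-tor-general} with \Cref{thm:free-quotient}. \Cref{thm:iso-tor-general} provides, unconditionally, an isomorphism of graded algebras
\begin{equation*}
  H_{\KK}^{*}(\ZZ_{\Sigma}) \cong \Tor_{H^{*}(BL)}(\kk,\kk[\Sigma])
\end{equation*}
where the right-hand side carries the \(*\)-product of \Cref{sec:twisted-prod}. Under the hypothesis that the \(\KK\)-action on~\(\ZZ_{\Sigma}\) is \(\kk\)-free, \Cref{thm:free-quotient} identifies the left-hand side with the ordinary cohomology of the quotient,
\begin{equation*}
  H_{\KK}^{*}(\ZZ_{\Sigma}) = H^{*}(\ZZ_{\Sigma}/\KK),
\end{equation*}
as graded algebras. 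Composing these two isomorphisms yields the claimed identification.

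For naturality, I would observe that both ingredients are natural with respect to vertex-preserving morphisms of simplicial posets: the statement of \Cref{thm:free-quotient} says so explicitly, and \Cref{thm:iso-tor-general} follows from the quasi-isomorphism~\(\Phi_{\Sigma}\) of \Cref{thm:iso-tor-bar}, which was proved to be natural, together with the \(\kk[\Sigma]\)-bilinear multiplicative homotopy constructed in the proof of \Cref{thm:iso-tor-general}, which is built entirely from the natural data~\(\gamma_{i}\),~\(\gamma_{ij}\in C^{*}(BL)\) and the formality map~\(f_{\Sigma}\). Since a vertex-preserving morphism~\(\Sigma\to\Sigma'\) induces a \(\KK\)-equivariant map~\(\ZZ_{\Sigma}\to\ZZ_{\Sigma'}\) and a morphism~\(\kk[\Sigma']\to\kk[\Sigma]\) of \(H^{*}(BL)\)-algebras, the composite isomorphism intertwines the induced maps on both sides.

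There is essentially no obstacle here; the corollary is a formal consequence of the two earlier results. The only point that deserves a line of verification is that the isomorphism of \Cref{thm:free-quotient} preserves the \(H^{*}(BL)\)-algebra structure compatibly with~\(\Phi_{\Sigma}\), which is immediate from its construction via pullback along the composition \(\ZZ_{\Sigma}/\KK\to B(\KK/\Gamma)\to B\KK\) and the observation that both sides are computed from the same Borel-type construction on \(\ZZ_{\Sigma}/\Gamma\). Given this, the proof amounts to a single sentence invoking the two cited results and their naturality.
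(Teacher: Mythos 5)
Your proposal is correct and matches the paper's proof exactly: the corollary follows immediately by composing \Cref{thm:iso-tor-general} with \Cref{thm:free-quotient}, and the naturality of the resulting isomorphism follows from the naturality of each ingredient. The paper states this in a single sentence; your extra elaboration on why each ingredient is natural is accurate but not new content.
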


\begin{proof}
  This follows by combining \Cref{thm:iso-tor-general} with \Cref{thm:free-quotient}.
\end{proof}

\begin{remark}
  Assume that \(K=1\), that is, \(T=L\).
  Then we can choose the basis~\((\alpha_{i})\) for~\(H^{1}(L;\Z)\) to be dual
  to the basis~\(( x_{v})\) for~\(H_{1}(T;\Z)\),
  so that all coordinates~\(\coord{i}{ x_{v}}\) are either \(0\) or~\(1\).
  From~\eqref{eq:twisting-general} we see that all twisting terms~\(q_{ij}\) vanish
  in this case. We thus recover the algebra isomorphism from \Cref{thm:dga-model-ZZ}.
\end{remark}

\section{The case where \texorpdfstring{\(2\)}{2} is invertible}
\label{sec:2-inv}

In this section we assume that \(2\) is a unit in~\(\kk\).

The inversion map on~\(T\) sends each \(1\)-simplex~\(\barsim{y}\in T\) to~\(\barsim{-y}\).
We can replace each representative~\( c_{v}=\barsim{y_{v}}\)
of the canonical basis~\((y_{v})_{v\in V}\) for~\(H_{1}(T;\Z)\) by
\begin{equation}
  \label{eq:def-tilde-c}
  \tildecc_{v} = \textstyle \frac{1}{2}\,\barsim{y_{v}}-\frac{1}{2}\,\barsim{-y_{v}}.
\end{equation}
This gives a new quasi-iso\-mor\-phism~\(\tilde f_{\Sigma}\colon C^{*}(DJ_{\Sigma})\to\kk[\Sigma]\)
inducing a new quasi-iso\-mor\-phism~\(\tildePsiSigma\) in \Cref{thm:HKZZSigma-Tor}.

We can similarly pass to slightly modified
functions~\(\tilde\alpha_{ij}\colon H_{1}(L;\Z)\to\kk\) with
\begin{equation}
  \label{eq:def-gamma-ij-2}
  \tilde\alpha_{ij} =
  \begin{cases}
    \alpha_{ii}+\frac{1}{2}\alpha_{i} & \text{if~\(i=j\),} \\
    \alpha_{ij} & \text{if~\(i>j\),}
  \end{cases}
\end{equation}
that is,
\begin{equation}
  \tilde\alpha_{ij}(x) =
  \begin{cases}
    \frac{1}{2}\coord{i}{x}\coord{i}{x} & \text{if~\(i=j\),} \\
    \coord{i}{x}\coord{j}{x} & \text{if~\(i>j\)}
  \end{cases}
\end{equation}
for~\(1\le j\le i\le n\) and~\(x\in H_{1}(L;\Z)\).

This leads to new cochains~\(\tilde\gamma_{ij}\in C^{2}(BL)\),
and the identities~\eqref{eq:d-gamma-ij} continue to hold for them.
We stress that we do not (and cannot)
change the representatives~\(\gamma_{i}\).
As before, we define \(\tilde q_{ij}\) to be the image of~\(\tilde\gamma_{ij}\)
in~\(\kk[\Sigma]\) with respect to the map~\(\tilde f_{\Sigma}\).
The following is a special case of~\citegersten{Addendum~\refgersten{thm:cuptwo-DJ}}.

\begin{lemma}
  \label{thm:tilde-qij-0}
  The twisting terms~\(\tildeq_{ij}\) vanish for all~\(i\ge j\).
\end{lemma}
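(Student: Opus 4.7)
The plan is to evaluate the map $\tilde f_\Sigma$ directly on the cochains $\tilde\gamma_{ij}\in C^{2}(BL)$ using the variant of \Cref{thm:value-fT} for the non-tautological representatives~$\tildecc_{v}$, and observe that the crucial change from $\alpha_{ij}$ to~$\tilde\alpha_{ij}$ makes the functions even in~$x$, whereas the new representatives $\tildecc_v$ are odd under $y\mapsto -y$. The antisymmetry then forces the coefficients to vanish.

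More concretely, the first step is to note that $\tilde f_\Sigma$ is obtained from $\tilde f_T$ by composition with the projection $\kk[V]\to\kk[\Sigma]$, so it suffices to show $\tilde q_{ij}=0$ already in~$\kk[V]$. Pulling $\tilde\gamma_{ij}$ back along the map $BT\to BL$, the corresponding function on $H_{1}(T;\Z)$ is $y\mapsto \tilde\alpha_{ij}(\bar y)$, where $\bar y$ is the image of~$y$ in $H_{1}(L;\Z)$. By \Cref{thm:value-fT}\,\ref{thm:value-fT-2}, applied to the representative $\tildecc=\frac{1}{2}\barsim{1}-\frac{1}{2}\barsim{-1}$ in each circle factor of~$T$, the coefficient of~$t_{v}$ in $\tilde f_{T}(\tilde\gamma_{ij})$ is
\begin{equation*}
    \tfrac{1}{2}\,\tilde\alpha_{ij}(x_{v})-\tfrac{1}{2}\,\tilde\alpha_{ij}(-x_{v}).
\end{equation*}

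The second step is the key observation that each $\tilde\alpha_{ij}$ is an even function of~$x$: by~\eqref{eq:def-gamma-ij-2} one has $\tilde\alpha_{ii}(x)=\frac{1}{2}(\coord{i}{x})^{2}$ and $\tilde\alpha_{ij}(x)=\coord{i}{x}\coord{j}{x}$ for $i>j$, both of which satisfy $\tilde\alpha_{ij}(-x)=\tilde\alpha_{ij}(x)$. (Indeed, this is precisely what the correction $+\frac{1}{2}\alpha_{i}$ in the $i=j$ case was designed to achieve; it cancels the odd part of the original $\alpha_{ii}$.) Substituting into the formula above gives zero, so $\tilde q_{ij}=0$ in~$\kk[V]$ and hence in~$\kk[\Sigma]$.

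There is essentially no obstacle here: the argument is a direct computation once \Cref{thm:value-fT}\,\ref{thm:value-fT-2} is applied to the antisymmetric representative~$\tildecc_{v}$. The only place where one must be mildly careful is to keep track of the fact that $\tilde\gamma_{ij}$, unlike the original $\gamma_{ij}$, has been adjusted so that the odd part (in the $y\mapsto -y$ sense) vanishes; the modified formula~\eqref{eq:def-gamma-ij-2} was chosen for exactly this purpose, which is why $2$ must be invertible.
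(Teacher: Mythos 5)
Your proposal is correct and follows essentially the same route as the paper's proof: apply \Cref{thm:value-fT}\,\ref{thm:value-fT-2} to the antisymmetric representative~\(\tildecc\) and observe that \(\tilde\alpha_{ij}(-x_{v})=\tilde\alpha_{ij}(x_{v})\) by~\eqref{eq:def-gamma-ij-2}, so the coefficient \(\tfrac12\tilde\alpha_{ij}(x_{v})-\tfrac12\tilde\alpha_{ij}(-x_{v})\) of~\(t_{v}\) vanishes. The extra remark about \(\tilde f_{\Sigma}\) factoring through \(\tilde f_{T}\) and the projection \(\kk[V]\to\kk[\Sigma]\) just makes explicit what the paper leaves implicit.
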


\begin{proof}
  Let~\(i>j\) and~\(v\in V\).
  We use \Cref{thm:value-fT}\,\ref{thm:value-fT-2}
  to determine the coefficient in front of the generator~\(t_{v}\in\kk[V]\).
  We have
  \begin{equation}
    \tilde\alpha_{ij}(-x_{v})
    = (\coord{i}{-x_{v}})(\coord{j}{-x_{v}})
    = \coord{i}{x_{v}}\coord{j}{x_{v}}
    = \tilde\alpha_{ij}(x_{v}),
  \end{equation}
  which together with~\eqref{eq:def-tilde-c}
  implies that the coefficient vanishes and therefore also~\(\tildeq_{ij}\).
  The case~\(i=j\) is analogous.
\end{proof}

\begin{theorem}
  \label{thm:iso-tor-2}
  There is an isomorphism of graded \(\kk\)-algebras
  \begin{equation*}
    H_{\KK}^{*}(\ZZ_{\Sigma}) \cong \Tor_{H^{*}(BL)}(\kk,\kk[\Sigma]),
  \end{equation*}
  where the product on the \(\Tor\)~term is the canonical one.
  The isomorphism is natural with respect to vertex-preserving morphisms of simplicial posets.
\end{theorem}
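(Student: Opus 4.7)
The plan is to revisit the proof of \Cref{thm:iso-tor-general} with the modified representatives~\(\tildecc_v\) from~\eqref{eq:def-tilde-c} and the modified cochains \(\tilde\gamma_{ij}\in C^2(BL)\) corresponding to the functions~\eqref{eq:def-gamma-ij-2}, both of which are available because \(2\) is a unit in~\(\kk\). Replacing \(c_v\) by~\(\tildecc_v\) yields, via \Cref{thm:HKZZSigma-Tor}, a new dga model \(\BB_{\tilde f_\Sigma}(\kk,C^*(BL),\kk[\Sigma])\) for \(H_\KK^*(\ZZ_\Sigma)\). By \Cref{thm:value-fT}\,\ref{thm:value-fT-2} the new quasi-isomorphism~\(\tilde f_\Sigma\) still sends the linear cocycles \(\gamma_i\) to \(\sum_v \coord{i}{x_v}\,t_v\), since \(\alpha_i\) is additive; consequently the formula~\eqref{eq:def-Phi} for \(\Phi_\Sigma\) lands in this new bar construction unchanged and is still a quasi-iso\-mor\-phism of chain complexes.

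The decisive observation is that \(\tilde\gamma_{ij}\) differs from~\(\gamma_{ij}\) at most by a scalar multiple of the cocycle~\(\gamma_i\) (when \(i=j\)), so the identities~\eqref{eq:d-gamma-ij} of \Cref{thm:d-q-ij}\,\ref{thm:d-q-ij-2} continue to hold with \(\tilde\gamma_{ij}\) in place of~\(\gamma_{ij}\). By \Cref{thm:tilde-qij-0} the images \(\tilde q_{ij} = \tilde f_\Sigma(\tilde\gamma_{ij})\in\kk[\Sigma]\) vanish for all \(i\ge j\). In the one-sided bar construction this turns the identities~\eqref{eq:d-gamma-ii-h} and~\eqref{eq:d-gamma-ij-h} into
\[
d[\tilde\gamma_{ii}] = \Phi_\Sigma(\alpha_i)\circ\Phi_\Sigma(\alpha_i), \qquad
d[\tilde\gamma_{ij}] = \Phi_\Sigma(\alpha_i)\circ\Phi_\Sigma(\alpha_j) + \Phi_\Sigma(\alpha_j)\circ\Phi_\Sigma(\alpha_i).
\]
Since \(\alpha_i\alpha_i = 0\) and \(\alpha_i\alpha_j + \alpha_j\alpha_i = 0\) in the canonical (exterior) product on~\(H^*(L)\), the right-hand sides are exactly what one needs for \(\Phi_\Sigma\) to commute with the canonical Koszul product \(\mu_\Kl\) on \(\Kl_\Sigma\) up to the coboundaries \(d[\tilde\gamma_{ij}]\). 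Equivalently, the definition~\eqref{eq:def-twisted-product} of the twisted product with the new twisting elements \(\tilde q_{ij}=0\) collapses to \(\mu_\Kl\).

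Finally, I repeat the homotopy construction from the proof of \Cref{thm:iso-tor-general} verbatim, substituting \(\tilde\gamma_{ij}\) for \(\gamma_{ij}\) and \(\tilde f_\Sigma\) for \(f_\Sigma\). The resulting \(\kk[\Sigma]\)-bilinear homotopy \(H\) witnesses that \(\Phi_\Sigma\) is multiplicative up to homotopy with respect to the \emph{canonical} product on~\(\Kl_\Sigma\). Passing to cohomology produces the claimed isomorphism of graded \(\kk\)-algebras, and naturality in vertex-preserving morphisms is inherited from each ingredient. The only potential difficulty---that residual twisting terms could survive the change of representatives---has already been eliminated by \Cref{thm:tilde-qij-0}, so no new obstacle appears.
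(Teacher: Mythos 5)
Your argument is correct and matches the paper's proof, which simply combines \Cref{thm:tilde-qij-0} with the proof of \Cref{thm:iso-tor-general} using the modified maps~\(\tilde f_{\Sigma}\) and~\(\tildePsiSigma\). You have merely spelled out the details (that \(\tilde f_\Sigma(\gamma_i)\) is unchanged because \(\alpha_i\) is additive, and that \(\tilde\gamma_{ij}\) still satisfies the identities~\eqref{eq:d-gamma-ij} because \(\tilde\gamma_{ii}-\gamma_{ii}\) is a multiple of the cocycle~\(\gamma_i\)), all of which are implicit in the paper's terse proof.
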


\begin{proof}
  Combine \Cref{thm:tilde-qij-0} with \Cref{thm:iso-tor-general},
  using the maps~\(\tilde f_{\Sigma}\) and~\(\tildePsiSigma\).
\end{proof}

\begin{corollary}
  \label{thm:free-2}
  Assume that \(\KK\) acts \(\kk\)-freely on~\(\ZZ_{\Sigma}\).
  Then there is an isomorphism of graded \(\kk\)-modules
  \begin{equation*}
    H^{*}(\ZZ_{\Sigma}/\KK) \cong \Tor_{H^{*}(BL)}(\kk,\kk[\Sigma])
  \end{equation*}
  where the product on the \(\Tor\)~term is the canonical one.
  The isomorphism is  
  natural with respect to vertex-preserving morphisms of simplicial posets.
\end{corollary}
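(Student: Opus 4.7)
The plan is to obtain \Cref{thm:free-2} as a direct two-step combination, mirroring exactly the derivation of \Cref{thm:free-general} from \Cref{thm:iso-tor-general} and \Cref{thm:free-quotient}, but with the equivariant side replaced by its $\kk$-inverted version. First, I would invoke \Cref{thm:free-quotient}: the assumption that $\KK$ acts $\kk$-freely on $\ZZ_{\Sigma}$ yields a natural isomorphism of graded $\kk$-algebras $H^{*}(\ZZ_{\Sigma}/\KK)\cong H_{\KK}^{*}(\ZZ_{\Sigma})$. Second, I would invoke \Cref{thm:iso-tor-2}, whose hypothesis that $2$ be invertible in $\kk$ is assumed throughout \Cref{sec:2-inv} and hence available here; this identifies $H_{\KK}^{*}(\ZZ_{\Sigma})$ with $\Tor_{H^{*}(BL)}(\kk,\kk[\Sigma])$ equipped with its canonical multiplication.

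Composing these two isomorphisms yields the desired identification. Naturality with respect to vertex-preserving morphisms of simplicial posets is built into both ingredients: \Cref{thm:free-quotient} is stated with this naturality, and \Cref{thm:iso-tor-2} inherits it from the maps $\tilde f_{\Sigma}$ and $\tildePsiSigma$ used in its proof, so the composite is natural as well. There is no genuine obstacle: the substantive work was done earlier in constructing the modified representative~$\tildecc_{v}$ in~\eqref{eq:def-tilde-c} and the modified cochains~$\tilde\gamma_{ij}$ in~\eqref{eq:def-gamma-ij-2}, and in verifying via \Cref{thm:tilde-qij-0} that the resulting twisting terms~$\tildeq_{ij}$ all vanish when $2$ is a unit. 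Once those facts are in hand, the corollary follows essentially formally.
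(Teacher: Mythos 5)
Your proposal is correct and matches the paper's intended (though unwritten) argument: Corollary \ref{thm:free-2} follows by combining \Cref{thm:iso-tor-2} with \Cref{thm:free-quotient}, exactly paralleling the proof of \Cref{thm:free-general}. You also correctly identify that the hypothesis that $2$ is invertible is in force throughout \Cref{sec:2-inv} and that naturality is inherited from both ingredients.
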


\section{Examples}
\label{sec:examples}

\def\bb{b}
\def\Ztwo{\Z_{\bb}}

In this section we illustrate Theorems~\ref{thm:iso-tor-general} and~\ref{thm:iso-tor-2},
and we show that even for quotients by subtori a multiplicative isomorphism
of the form
\begin{equation}
  \label{eq:examples:iso-tor}
  H^{*}(\XXX_{\Sigma}) \cong \Tor_{H^{*}(BL)}(\kk,\kk[\Sigma])
\end{equation}
does not exist in general. As indicated by the notation,
we will phrase our examples in the language of toric varieties.

We start by generalizing \Cref{ex:intro}. We consider the boundary fan~\(\Sigma\)
of a simplicial cone in~\(\R^{n}\) with rays given by the columns of the characteristic matrix
\begin{equation}
  \label{eq:char-matrix-RP}
  \Lambda =
  \begin{bmatrix}
    \bb \\
    -1 & 1 \\
    \vdots & & \ddots \\
    -1 & & & 1
  \end{bmatrix}
  \in\Z^{n\times n}
\end{equation}
for~\(n\ge2\) and \(\bb\)~prime.
The cokernel of the corresponding linear map is isomorphic to~\(\Z_{\bb}\),
and the image of the vector~\([\frac{1}{\bb},\dots,\frac{1}{\bb}]\) is integral.
This shows that the associated smooth toric variety~\(\XXX_{n}\coloneqq\XXX_{\Sigma}\)
is the quotient of~\(\C^{n}\setminus\{0\}\)
by the diagonal action of the group~\(\Gamma_{\bb}\) of \(\bb\)-th roots of unity.
Hence \(\XXX_{n}\) is homotopic to the generalized lens space~\(S^{2n-1}/\Gamma_{\bb}\),
which is also the partial quotient~\(\XX_{\Sigma}\)
since \(\ZZ_{\Sigma}\) is the boundary of~\((D^{2})^{n}\approx D^{2n}\).
In particular, \(\XXX_{n}\simeq\RP^{2n-1}\) for~\(\bb=2\).

\begin{example}
  \label{ex:RP}
  For~\(\kk=\Z_{\bb}\), the torsion product for~\(\XXX_{n}\) looks as follows.
  \begin{equation}
    \label{ex:tor-RP-Ztwo}
    \begin{array}{cc|l}
      \Ztwo & & 2n \\
      \Ztwo & \Ztwo & 2n\mathrlap{{}-2} \\
      \vdots & \vdots & \vdots \\
      \Ztwo & \Ztwo & 4 \\
      \Ztwo & \Ztwo & 2 \\
      & \Ztwo & 0 \\
      \hline
      -1 & 0
    \end{array}
  \end{equation}
  We have
  \begin{equation}
    d\,\alpha_{1} = -\bb\,t_{1}
    \qquad\text{and}\qquad
    d\,\alpha_{k} = t_{1}-t_{k} \quad\text{for~\(k>1\).}
  \end{equation}
  (Here we are using the obvious notation~\(t_{1}\),~\dots~\(t_{n}\) for the generators of~\(\kk[V]\).)
  The generator in bidegree~\((0,2k)\) is~\(t_{1}^{k}\) for~\(0\le k<n\),
  and the one in bidegree~\((-1,2k)\) is \(\alpha_{1}\,t_{1}^{k-1}\) for~\(0<k\le n\).
  As it should be, the power
  \begin{equation}
    t_{1}^{n} = d\,\sum_{k=2}^{n}\alpha_{k}\,t_{k-1}\cdots t_{2}\,t_{1}^{n-k+1}
  \end{equation}
  vanishes in cohomology since~\(t_{1}\cdots t_{n}=0\).
  According to \Cref{thm:iso-tor-general} the product is given by
  \begin{equation}
    \alpha_{1}*\alpha_{1} = q_{11} = {\textstyle \frac{\bb(\bb-1)}{2}}\,t_{1}.
  \end{equation}
  
  For~\(\bb=2\), we have \(q_{11}=t_{1}\),
  which indeed describes the cohomology ring of real projective space.
  As pointed out in \Cref{ex:intro},
  an isomorphism of the form~\eqref{eq:examples:iso-tor}
  would imply that the cup product respects bidegrees,
  hence that all products among elements of odd degree vanish in this example.

  For odd~\(\bb\), the quotient~\((\bb-1)/2\) is an integer. Hence
  \(\alpha_{1}*\alpha_{1}\) is a multiple of~\(\bb\) and therefore vanishes.
  (The class represented by~\(\alpha_{1}\) necessarily squares to~\(0\)
  since \(2\) is invertible in~\(\kk\).)
  Thus, all twisting terms vanish in cohomology, which retrieves
  the cohomology ring of the generalized lens space and confirms \Cref{thm:iso-tor-general}.
\end{example}

\begin{example}
  For~\(\bb=2\) and~\(\kk=\Z\), the torsion product for~\(\XXX_{n}\)
  is given by
  \begin{equation}
    \label{ex:tor-RP-Z}
    \begin{array}{cc|l}
      \Z & & 2n \\
      & \Z_{2} & 2n\mathrlap{{}-2} \\
      & \vdots & \vdots \\
      & \Z_{2} & 2 \\
      & \Z & 0 \\
      \hline
      -1 & 0
    \end{array}
  \end{equation}
  with the same generators in bidegrees~\((0,2k)\) and~\((-1,2n)\) as before.

  The product~\(\XXX_{m}\times\XXX_{n}\) is given by the product of the fans
  (and the corresponding moment-angle complexes by the join of the simplicial complexes).
  According to the Künneth formula, the cohomology~\(H^{*}(\XXX_{m}\times\XXX_{n})\) with~\(m\),~\(n>2\)
  has additional torsion terms in its cohomology produced by~\(\Tor(\Z_{2},\Z_{2})=\Z_{2}\).
  For instance, for~\(m=2\) and~\(n=3\) we obtain
  \begin{equation}
    \setlength{\fboxsep}{1pt}
    \begin{array}{ccc|c}
      \Z &  &  & 10 \\
       & \Z_{2}\oplus\Z_{2} &  & 8 \\
       & \Z\oplus\Z_{2}\oplus\fbox{\(\Z_{2}\)} & \Z_{2} & 6 \\
       & \Z\oplus\fbox{\(\Z_{2}\)} & \Z_{2}\oplus\Z_{2} & 4 \\
       &  & \Z_{2}\oplus\Z_{2} & 2 \\
       \phantom{\Z_{2}\oplus\Z_{2}} &  & \Z & 0 \\
      \hline
      -2 & -1 & 0 & \mathrlap{\;\;\;\; ,}
    \end{array}
  \end{equation}
  where the torsion terms are framed.
  (Recall that the differential goes horizontally
  if we arrange the terms in the Koszul complexes so as to give our tables.)

  We write the relevant variables corresponding to~\(\XXX_{n}\) as~\(\alpha_{1}\) and~\(t_{1}\)
  and the ones corresponding to~\(\XXX_{m}\) as~\(\beta_{1}\) and~\(s_{1}\).
  For any~\(0<k<n\) and~\(0<l<m\) we get a torsion term~\(\Z_{2}\) in bidegree~\((-1,2(k+l))\)
  with generator
  \begin{equation}
    \label{eq:def-xkl}
    x_{kl} = \alpha_{1}\,t_{1}^{k-1}\,s_{1}^{l} - \beta_{1}\,t_{1}^{k}\,s_{1}^{l-1}.
  \end{equation}
  As before, we have the twisted products~\(\alpha_{1}^{2}=t_{1}\) and~\(\beta_{1}^{2}=s_{1}\).
  The square of~\(x_{kl}\) therefore is given by
  \begin{equation}
    x_{kl}^{2} = t_{1}^{2k-1}\,s_{1}^{2l} + t_{1}^{2k}\,s_{1}^{2l-1} \,,
  \end{equation}
  which is not a coboundary for~\(m\),~\(n\) large enough.
  However, an isomorphism of the form~\eqref{eq:examples:iso-tor} would
  force all these squares to vanish.
  The smallest counterexample is
  the one illustrated in the table above,
  where we get~\(x_{11}^{2}=t_{1}s_{1}^{2}+t_{1}^{2}s_{1}=t_{1}^{2}s_{1}\),
  which is the generator in bidegree~\((0,6)\).
\end{example}

So far, our examples have been quotients
by the disconnected groups~\(\Gamma_{\bb}\) and \(\Gamma_{2}\times \Gamma_{2}\).
As remarked above, this is caused by the fact
that the columns in the characteristic matrix~\eqref{eq:char-matrix-RP}
do not span the lattice~\(\Z^{n}\). To fix this, we simply add another column, say the first canonical
basis vector~\(e_{1}\). Since \(e_{1}\) does not lie in the support of the fan~\(\Sigma\) considered so far,
we obtain a new fan~\(\Sigma'\) as the union of~\(\Sigma\) and the ray through~\(e_{1}\),
hence a new smooth toric variety~\(\XXX_{n}'=\XXX_{\Sigma'}\).
It is now the quotient of its Cox construction by a free action of~\(\C^{\times}\).

The toric variety given by a single ray in~\(\R^{n}\) is isomorphic to~\(\C\times(\C^{\times})^{n-1}\),
with torsion product
\begin{equation}
  \begin{array}{cccc|l}
    \kk^{(n\mathrlap{-1)}} & & & & 2n\mathrlap{{}-2} \\
    & \ddots & & & \vdots \\
    & & \kk^{(1)} & & 2 \\
    & & & \kk^{(0)} & 0 \\
    \hline
    -(n-1) & \cdots & \mathllap{-}1 & 0
  \end{array}
\end{equation}
for any~\(\kk\). Here we have written \((k)\) for the binomial coefficient~\(\binom{n-1}{k}\).

By a Mayer--Vietoris argument, we see that the torsion product for~\(\XXX_{n}'\)
differs from the one for~\(\XXX_{n}\) in that the terms in bidegree~\((0,2)\)
and~\((-1,2)\) are removed and
terms~\(\kk^{(k)}\) added in bidegree~\((-k,2k+2)\) for \(0\le k<n\).

\goodbreak

\def\Ztwo{\Z_{2}}

\begin{example}
  Assume again \(\bb=2\) and~\(\kk=\Z_{2}\).
  What we have said so far implies
  that \Cref{ex:RP} still works for~\(\XXX_{n}'\)
  if we look at~\(n=4\) and \(x=\alpha_{1}\,t_{1}\),
  for instance. The torsion product is as follows:
  \begin{equation}
    \begin{array}{cccc|l}
      \Ztwo & & \Ztwo & & 8 \\
      & \Ztwo^{3} & \Ztwo & \Ztwo & 6 \\
      & & \Ztwo^{4} & \Ztwo & 4 \\
      & & & \Ztwo & 2 \\
      & & & \Ztwo & 0 \\
      \hline
      -3 & -2 & -1 & 0
    \end{array}
  \end{equation}
  In particular, it vanishes in bidegree~\((-2,8)\).
  Since \((-1,4)\) is the only non-zero bidegree in degree~\(3\),
  the Buchstaber--Panov formula predicts that the square of any degree~\(3\) elements vanishes.
  However, \(x^{2}=t_{1}^{3}\) is the generator in bidegree~\((0,6)\)
  which follows from the same calculation as before.
  Alternatively, one can look at the restriction map~\(H^{*}(\XXX_{n}')\to H^{*}(\XXX_{n})\cong H^{*}(\RP^{7})\),
  which is surjective in degrees~\(\ne1\).
\end{example}

\begin{example}
  For~\(\bb=2\) and~\(\kk=\Z\),
  the element~\(x_{kl}\) defined in~\eqref{eq:def-xkl}
  still generates a torsion term~\(\Z_{2}\) in the cohomology of~\(\XXX_{m}'\times\XXX_{n}'\)
  for~\(k\),~\(l\ge2\),
  and its square represents a non-zero class in bidegree \((0,4(k+l)-2)\)
  for~\(m\),~\(n\) large enough. Once again, the canonical multiplication
  in~\eqref{eq:examples:iso-tor} fails to describe this product.
\end{example}

\section{Concluding remarks}

In this final section we again write topological spaces in the form~\(X\),~\(Y\),~\(Z\).

\subsection{Real and rational coefficients}
\label{sec:realcoeffs}

We had to face two main difficulties when proving our main theorems:
The cup product of singular cochains is not graded commutative,
and the quasi-iso\-mor\-phism~\(C^{*}(BT)\to H^{*}(BT)\) depends on
choices and is not natural with respect to arbitrary maps between tori.
Both problems disappear if one uses real or rational coefficients and (polynomial) differential forms.

Let us first consider the case of a smooth toric variety~\(\XXX_{\Sigma}\) and \(\kk=\R\).
We write \(T_{\sigma}\) for the subtorus of~\(T\)
corresponding to the linear hull of~\(\sigma\in\Sigma\) in~\(\R^{n}=H_{1}(T)\).
The inclusion of invariant differential forms gives
a quasi-iso\-mor\-phism of cdgas
\( 
  H^{*}(T) \hookrightarrow \Omega^{*}(T)
\), 
natural in~\(T\).
From this we get a zigzag of cdga quasi-iso\-mor\-phisms
\begin{equation}
  \label{eq:zigzag-real}
  \Omega^{*}(\XXX_{\sigma})
  \leftarrow \Omega^{*}(T/T_{\sigma})
  \leftarrow H^{*}(T/T_{\sigma})
  \rightarrow \Kl_{\sigma} = H^{*}(T)\otimes H^{*}(BT_{\sigma}),
\end{equation}
natural with respect to the inclusion of faces of~\(\sigma\in\Sigma\),
compare~\cite[Sec.~3.3]{Franz:2001}.
An argument as in~\cite{Franz:2001} shows that the dgas~%
\(\Omega^{*}(\XXX_{\Sigma})\) and~\(\Kl_{\Sigma}\)
are quasi-iso\-mor\-phic, naturally with respect to
inclusion of subfans. In particular, there is an isomorphism
of graded algebras
\begin{equation}
  H^{*}(\XXX_{\Sigma};\R) = \Tor_{H^{*}(BL;\R)}(\R,\R[\Sigma]).
\end{equation}

In the context of polynomial differential forms
we can obtain a quasi-iso\-mor\-phism of cdgas that generalizes
\cite[Thm.~6.2]{PanovRay:2008} from moment-angle complexes to partial quotients\footnote{%
  Such a quasi-iso\-mor\-phism is also implicit in~\cite[Thms.~8.1~\&~8.39]{BuchstaberPanov:2004},
  where the original argument for the isomorphism~\eqref{eq:intro:E2}
  from~\cite{BuchstaberPanov:2002} was modified from cellular cochains
  to rational coefficients and polynomial differential forms. We note that
  in the proof of~\cite[Thm.~8.39]{BuchstaberPanov:2004} one has to address
  the problem that in general the formality maps for~\(BT^{r}\) and~\(BT^{m}\) are not compatible
  with the morphisms induced by the map~\(BT^{r}\to BT^{m}\).
  This can be solved by using simplicial constructions as in \Cref{sec:tori}
  or by an argument analogous to~\cite[p.~176]{BaumSmith:1967}.}%
\footnote{%
  I have not been able to understand the proofs given in~\cite[Thm.~8.1.6, Lemma~8.1.9]{BuchstaberPanov:2015}.
  They seem to use ordinary differential forms with polynomial coefficients
  instead of polynomial differential forms in the sense of Sullivan and to assume
  a (non-existing) polynomial parametrization of the circle in the plane.}
and also provides a different proof of~\cite[Cor.~7.2]{PanovRay:2008}.

\begin{proposition}
  Assume \(\kk=\Q\).
  \begin{enumroman}
  \item The cdgas~\(\APL^{*}(X_{\Sigma})\) and~\(\Kl_{\Sigma}\)
    are quasi-iso\-mor\-phic for any simplicial poset~\(\Sigma\),
    naturally with respect to vertex-preserving morphisms.
  \item \textup{(Panov--Ray)}
    If \(\Q[\Sigma]\cong\HL^{*}(X_{\Sigma})\) is free over~\(H^{*}(BL)\), then \(X_{\Sigma}\) is formal
    in the sense of rational homotopy theory.
  \end{enumroman}
\end{proposition}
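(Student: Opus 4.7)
The plan is to adapt the real-coefficient argument around \eqref{eq:zigzag-real} to rational coefficients, using Sullivan's polynomial differential forms $\APL^*$ in place of smooth forms. Since $\APL^*$ is a contravariant functor from simplicial sets to cdgas that sends finite colimits of simplicial sets to finite limits of cdgas, one can work strictly commutatively throughout and bypass the hga difficulties that forced the twisted product of \Cref{sec:twisted-prod}.

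First, for each $\sigma\in\Sigma$ I would construct a zigzag of cdga quasi-isomorphisms connecting $\APL^*(DJ_\sigma)$ to $\Q[\sigma]=H^*(BT^\sigma;\Q)$, natural in $\sigma$. Existence of such a zigzag amounts to the formality of $BT^\sigma$ over $\Q$, and the naturality in $\sigma$---including compatibility with the folding map $DJ_\sigma\to BT$ and with the map $BL\to DJ_\Sigma$---should be arranged simplicially, along the lines of \Cref{sec:tori}, rather than through an explicit (nonexistent) polynomial parametrization. Taking the colimit over $\sigma\in\Sigma$ and using $DJ_\Sigma=\colim_{\sigma\in\Sigma}DJ_\sigma$ then produces a natural cdga zigzag $\APL^*(DJ_\Sigma)\simeq\Q[\Sigma]$, and similarly a compatible $\APL^*(BL)\simeq H^*(BL;\Q)$.

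Next, a cdga analogue of \Cref{thm:dga-bar} models $X_\Sigma$ by the one-sided bar construction $\Bar{\APL^*(DJ_\Sigma)}{\APL^*(BL)}$; this is genuinely a cdga with the shuffle product because $\APL^*$ takes values in cdgas, so the higher operations $\EE_k$ of \Cref{thm:def-prod-bar} all vanish. Replacing $\APL^*(DJ_\Sigma)$ and $\APL^*(BL)$ by their cohomology algebras via the zigzag just constructed yields $\Bar{\Q[\Sigma]}{H^*(BL;\Q)}$, whose underlying complex is $\Kl_\Sigma$ with its Koszul differential and whose product is the standard componentwise product, proving part~(i). For part~(ii), define the $\Q$-linear map
\begin{equation*}
  \pi\colon\Kl_\Sigma\to A\coloneqq\Q\otimes_{H^*(BL;\Q)}\Q[\Sigma],
  \quad
  \alpha\,f\mapsto
  \begin{cases} [f] & \text{if $\deg\alpha=0$,}\\ 0 & \text{otherwise.}\end{cases}
\end{equation*}
This is clearly multiplicative, and it is a chain map because $\pi(d\alpha)=-\sum_v\alpha(x_v)[t_v]=0$ in $A$, since $\sum_v\alpha(x_v)\,t_v$ is the image of a positive-degree element of~$H^*(BL;\Q)$. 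Under the freeness hypothesis the Koszul complex computes only $\Tor^0=A$, so $\pi$ is a cdga quasi-isomorphism; combined with part~(i) this exhibits $\APL^*(X_\Sigma)\simeq H^*(X_\Sigma;\Q)$ as cdgas, i.e.\ formality.

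The hard part will be the first step---producing the natural cdga zigzag $\APL^*(DJ_\Sigma)\simeq\Q[\Sigma]$ compatibly with the map to $\APL^*(BL)$. This is exactly the subtlety flagged in the second footnote: Sullivan models of $BT^r$ and $BT^m$ are not automatically compatible with the cdga map induced by a quotient $T^r\to T^m$ (as opposed to a coordinate inclusion), so one cannot simply pick minimal models independently. Working entirely at the simplicial level and using the explicit bar-construction description of $B(-)$ for lattices should give enough rigidity to assemble the zigzag coherently, playing the role of the Baum--Smith-style argument referenced in the first footnote.
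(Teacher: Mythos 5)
Your overall plan is sound and captures the right ideas, and your argument for part~(ii) is essentially the one in the paper: the augmentation $\pi\colon\Kl_{\Sigma}\to\Q\otimes_{H^{*}(BL)}\Q[\Sigma]=\Q[\Sigma]/\mmm$ is a cdga map, and under the freeness hypothesis the Koszul complex has cohomology concentrated in $\Tor^{0}$, so $\pi$ is a quasi-isomorphism; combined with part~(i) this gives formality. (You should also cite the identification $H^{*}(X_{\Sigma})\cong\Q[\Sigma]/\mmm$ explicitly, but that follows anyway once $\pi$ is a quasi-isomorphism.)

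The gap is in part~(i), and you have in fact located it precisely: the natural cdga comparison between polynomial forms and cohomology. You say that "working entirely at the simplicial level ... should give enough rigidity to assemble the zigzag coherently," but this is a hope, not a construction, and it is exactly the step that the footnote flags as the place where earlier published arguments went wrong. The paper does not leave this to faith; it supplies an explicit \emph{direct} cdga map (no zigzag needed) $\psi\colon H^{*}(BN)\to\APL^{*}(BN)$ on the simplicial torus $T=BN$, defined on a degree-$1$ generator $\alpha\in H^{1}(BN)\cong\Hom_{\Z}(N,\Q)$ by
\begin{equation*}
  \psi(\alpha)_{\aa} = \sum_{k=1}^{n} \alpha(a_{1}+\dots+a_{k})\,dt_{k}
  \qquad\text{for }\aa=[a_{1}|\dots|a_{n}]\in BN,
\end{equation*}
and checks that $\psi(\alpha)$ is a closed $1$-form, that $\psi$ extends multiplicatively to a quasi-isomorphism, and that it is natural in~$N$ (hence in particular compatible with the quotient map $T\to L$). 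This one formula is the crux that makes the Panov--Ray argument go through with $\APL^{*}$; without it the rest of your zigzag cannot be assembled. As a secondary remark, your route is also heavier than necessary: the paper does not rebuild the bar-construction machinery over $\APL^{*}$ but observes that, once $\psi$ is in hand, one can simply imitate the Panov--Ray zigzag (the $\APL^{*}$ analogue of~\eqref{eq:zigzag-real}) through the tori $T/T_{\sigma}$, which sidesteps the need to verify the cdga analogue of \Cref{thm:dga-bar} and the colimit-compatibility questions you raise about $DJ_{\Sigma}$.
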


\begin{proof}
  For the first part, we imitate Panov--Ray's reasoning in~\cite[Sec.~6]{PanovRay:2008}.
  Inspection of their proof shows that we only need a quasi-iso\-mor\-phism of cdgas
  \( 
    H^{*}(T) \to \APL^{*}(T)
  \) 
  that is natural with respect to morphisms of tori,
  analogously to the inclusion of invariant differential forms used in~\eqref{eq:zigzag-real}.
  By \Cref{thm:simplicial-torus-inclusion} we can assume that \(T=BN\) is the simplicial torus
  associated to some lattice~\(N\).
  
  Given an element~\(\alpha\in H^{1}(BN)\cong \Hom_{\Z}(N,\Q)\), we define
  \begin{equation}
    \psi(\alpha)_{\aa} = \sum_{k=1}^{n} \alpha(a_{1}+\dots+a_{k})\,dt_{k}
  \end{equation}
  for an \(n\)-simplex~\(\aa=[a_{1}|\dots|a_{n}]\in BN\). (Note the similarity with~\eqref{eq:def-1-psg}.)
  It is straightforward to check that \(\psi(\alpha)\) is a well-defined closed \(1\)-form on~\(BN\)
  and that \(\psi\) extends to a quasi-iso\-mor\-phism~\(H^{*}(BN)\to\APL^{*}(BN)\), natural in~\(N\).

  Now assume that \(\HL^{*}(X_{\Sigma})\) is free over~\(H^{*}(BL)\).
  In this case we have an isomorphism
  \( 
    H^{*}(X_{\Sigma}) \cong \Q[\Sigma]/\mmm
  \), 
  where \(\mmm\lhd H^{*}(BL)\) is the maximal homogeneous ideal,
  \cf~\cite[p.~3]{BifetDeConciniProcesi:1990} or~\cite[Thm.~7.4.35]{BuchstaberPanov:2015}.
  Moreover, the projection map
  \begin{equation}
    \Kl_{\Sigma} \to \Q[\Sigma]\,/\,\mmm,
    \qquad
    \alpha\,f \mapsto [f]
  \end{equation}
  is a quasi-iso\-mor\-phism of cdgas, which proves the second part.
\end{proof}

\subsection{Cohomology of quotient stacks}

For any algebraic subgroup~\(\KKK\subset\TTT\) one can consider
the quotient stack~\([\ZZZ_{\Sigma}/\KKK]\), which is a toric Deligne--Mumford stack
if \(\KKK\) acts with finite isotropy groups.
Edidin~\cite[Thm.~3.16]{Edidin:2013} has shown that
the cohomology of the stack~\([\ZZZ_{\Sigma}/\KKK]\)
is the \(\KKK\)-equivariant cohomology of~\(\ZZZ\).
Hence \Cref{thm:iso-tor-general}
can be read as describing the cohomology rings of these quotient stacks.

\subsection{Real toric varieties and real partial quotients}

We briefly survey the state of affairs for the cohomology rings
of real toric varieties and partial quotients of
real moment-angle complexes~\(\R Z_{\Sigma}=\ZZ_{\Sigma}(D^{1},S^{0})\),
\cf~\cite[Sec.~6.6]{BuchstaberPanov:2002}.

Real moment-angle complexes are homotopy-equivalent to ``real Cox constructions'',
which are complements of real coordinate subspace arrangements.
A description of their integer cohomology ring was given
by de~Longueville~\cite[Thm.~1.1]{DeLongueville:1999} in~1999.
In 2013, however, Gitler and López de Medrano~\cite[Sec.~3]{GitlerLopezDeMedrano:2013}
found an example of a simplicial complex~\(\Sigma\)
such that rings~\(H^{*}(Z_{\Sigma};\Z)\) and~\(H^{*}(\R Z_{\Sigma};\Z)\)
are torsion-free and modulo~\(2\) non-iso\-mor\-phic as ungraded rings,
which contradicts de Longueville.
A corrected product formula was recently provided by Cai~\cite[p.~514]{Cai:2017}.

Based on this, Choi--Park~\cite[Thm.~4.5]{ChoiPark:2017},~\cite[Main Thm.]{ChoiPark:2017a}
have computed the cohomology ring of a partial quotient~\(\R X_{\Sigma}=\R Z_{\Sigma}/K\)
by a freely acting subgroup~\(K\) of~\(G=(\Z_{2})^{V}\)
under the assumption that \(2\in\kk\) is invertible.
Suciu--Trevisan~\cite{Suciu:2013}, \cite[Sec.~4.2]{Trevisan:2012}
had previously determined the rational Betti numbers of small covers.

Assume \(\kk=\Z_{2}\).
By an argument as in \Cref{rem:finitedim-complex},
one can see that in this case Cai's description is additively of the form
\begin{equation}
  \label{eq:iso-Tor-real}
  H^{*}(\R Z_{\Sigma}) = \Tor_{H^{*}(BG)}(\kk,\kk[\Sigma]),
\end{equation}
where the grading of the Stanley--Reisner ring~\(\kk[\Sigma]\)
now is such that linear elements have degree~\(1\),
as do the generators of the polynomial algebra~\(H^{*}(BG)\).
The product on the \(\Tor\)~term is \emph{not} the standard one,
as can be seen for~\(\Sigma=\{\emptyset\}\) already: In this case we have \(\R Z_{\Sigma}=G\) and~\(\kk[\Sigma]=\kk\).
The right-hand side of~\eqref{eq:iso-Tor-real} therefore is a (strictly) exterior algebra,
while \(H^{*}(\R Z_{\Sigma})\) is the algebra of all functions~\(G\to\kk\) with pointwise multiplication,
which are all idempotent. Arguably, this difference to the case of~\(Z_{\Sigma}\)
stems from the very fact that \(H^{*}(G)\) is not an exterior algebra anymore.
If \(H^{*}(G)\) were a primitively generated exterior algebra,
then the calculation of the cohomology ring of a moment-angle complex done in~\cite{Franz:2003a} would
carry over to give not just an additive, but even a multiplicative isomorphism in~\eqref{eq:iso-Tor-real}.

Nothing seems to be known about
the analogous formula for partial quotients,
\begin{equation}
  H^{*}(\R X_{\Sigma}) = \Tor_{H^{*}(BL)}(\kk,\kk[\Sigma]),
\end{equation}
except that by a repeated application of~\cite[Prop.~7.1.6]{Hausmann:2014}
it holds even multiplicatively
if \(\kk[\Sigma]=H_{L}^{*}(\R X_{\Sigma})\) is free over~\(H^{*}(BL)\).
This includes the case of smooth projective real toric varieties
considered by Jurkiewicz~\cite[Thm.~4.3.1]{Jurkiewicz:1985} and
its generalization to small covers
due to Davis--Januszkiewicz~\cite[Thm.~4.14]{DavisJanuszkiewicz:1991}.

\end{document}